\newcommand{\M}{\mathcal{M}}
\renewcommand{\phi}{\varphi}
\renewcommand{\circledS}{\boxplus}
\newcommand{\BP}{\textbf{P}}
\newcommand{\lr}[1]{\langle #1 \rangle}
\renewcommand{\iff}{\Longleftrightarrow}
\newcommand{\lra}{\leftrightarrow}
\newcommand{\weg}[1]{}
\theoremstyle{definition}
\newtheorem{theorem}{Theorem}
\newtheorem{lemma}[theorem]{Lemma}
\newtheorem{definition}[theorem]{Definition}
\newtheorem{remark}[theorem]{Remark}
\newtheorem{proposition}[theorem]{Proposition}
\newtheorem{example}[theorem]{Example}
\newtheorem{corollary}[theorem]{Corollary}
\title{Two variants of noncontingency operator}
\author{Jie Fan\\
\small School of Humanities, University of Chinese Academy of Sciences, Beijing, China  \\
\small \texttt{jiefan@ucas.ac.cn}}
\date{}
\begin{document}
\maketitle

\begin{abstract}
By slightly adapting two equivalent semantics of noncontingency operator, we obtain two variants, $\boxdot$ and $\boxplus$, with non-equivalent semantics. We show that on the class of models satisfying any of five basic properties (i.e. seriality, reflexivity, transitivity, symmetry, Euclidicity), the logic $\mathcal{L}(\boxdot)$, which has $\boxdot$ as the sole modal primitive, is less expressive than the logic $\mathcal{L}(\boxplus)$, which has $\boxplus$ as the sole modal primitive. We investigate the frame definability of both languages. We then axiomatize $\mathcal{L}(\boxplus)$ and $\mathcal{L}(\boxdot)$ over various classes of bimodal frames. Among other results, a notion of morphisms, called `$\boxdot$-morphisms', are provided to show the completeness of axiomatizations of $\mathcal{L}(\boxdot)$ over serial frames and also over symmetric frames.
\end{abstract}

\section{Introduction}

Past decades have witnessed a bunch of studies on noncontingency logic, see e.g.~\cite{Humberstone95,DBLP:journals/ndjfl/Kuhn95,DBLP:journals/ndjfl/Zolin99,hoeketal:2004,steinsvold:2008,Fanetal:2014,FanvD:neighborhood,Fanetal:2015,Fan:2018b,Fan2018newperspective,Fan:2019}.  This logic is obtained by enriching propositional logic with an important metaphysical notion --- contingency, which dates back to Aristotle~\cite{Borgan67}. Intuitively, a proposition is {\em contingent}, if it is possibly true and also possibly false; otherwise, it is noncontingent, i.e. necessarily true or necessarily false. In an epistemic setting, contingency amounts to `ignorance', and noncontingency amounts to `knowing whether', which is perhaps the closest knowing-wh companion to `knowing that' (namely, standard propositional knowledge) among various knowledge types~\cite{Wang:2016}.

Formally, given a Kripke model $\M=\lr{S,R,V}$, where $S$ is a nonempty set of possible worlds, $R\subseteq S\times S$ is called {\em accessibility relation}, and $V$ is a valuation that assigns a set $V(p)\subseteq S$ to each propositional variable $p$, the formula $\Delta\phi$, read ``it is noncontingent that $\phi$'', is evaluated as follows:

\[
\begin{array}{lclr}
\M,s\vDash\Delta\phi&\iff & \text{for all }t,u\in S,\text{ if }sRt\text{ and }sRu,&\\
&&\text{then }(\M,t\vDash\phi\iff \M,u\vDash\phi).&(\text{DEF~1})\\
\end{array}
\]

Equivalently,
$$
\begin{array}{lclr}
\M,s\vDash\Delta\phi&\iff & R(s)\vDash\phi\text{ or }R(s)\vDash\neg\phi,&(\text{DEF~2})\\
\end{array}$$
where $R(s)\vDash\phi$ means that $\phi$ is true at all successors of $s$ w.r.t. $R$, and similarly for $R(s)\vDash\neg\phi$.

By slightly adapting the above semantics, we obtain two variants of $\Delta$, denoted $\boxdot$ and $\boxplus$ respectively, as follows.
\[
\begin{array}{lclr}
\M,s\vDash\boxdot\phi&\iff &\text{for all }t,u,\text{ if }sR_1t\text{ and }sR_2u,&\\
&&\text{then }(\M,t\vDash\phi\iff \M,u\vDash\phi).&(\text{DEF~1'})\\
\M,s\vDash\boxplus\phi&\iff &R_1(s)\vDash\phi\text{ or }R_2(s)\vDash\neg\phi.&(\text{DEF~2'})\\
\end{array}
\]

It is not hard to see that (DEF~1) and (DEF~2) are, respectively, special cases of (DEF~1') and (DEF~2') when $R_1=R_2=R$. This entails that both $\boxdot$ and $\boxplus$ are more general than $\Delta$.  Moreover, as $\vDash\boxdot\phi\lra\boxdot\neg\phi$ but $\nvDash\boxplus\phi\lra\boxplus\neg\phi$ (as we will see below), we may call $\boxdot$ `general noncontingency' and $\boxplus$ `pseudo noncontingency' operators. Unlike the fact that (DEF~1) is equivalent to (DEF~2), (DEF~1') and (DEF~2') are {\em not} equivalent, that is, $\nvDash\boxdot\phi\lra\boxplus\phi$.

This paper investigates both operators. Roughly speaking, a proposition is {\em generalized noncontingent}, if the proposition has the same truth value no matter whether you look at it in this way ($R_1$) or in that way ($R_2$); and a proposition is {\em pseudo noncontingent}, if it is necessary in this way ($R_1$), or it is impossible in that way ($R_2$). Whenever both ways are the same, both operators then become the more-familiar noncontingency operator.

The remainder of the paper is structured as follows. After introducing the syntax and semantics of logic $\mathcal{L}(\boxdot)$ for generalized noncontingency and logic $\mathcal{L}(\boxplus)$ for pseudo noncontingency (Sec.~\ref{sec.synseman}), we compare the relative expressivity of the two logics (Sec.~\ref{sec.expressivity}), and investigate their frame definability (Sec.~\ref{sec.framedefinability}) with the help of a notion of $\boxdot$-morphisms (Sec.~\ref{sec.morphism}). We then axiomatize $\mathcal{L}(\boxplus)$ and $\mathcal{L}(\boxdot)$ over various bimodal frames in Sec.~\ref{sec.boxplus-logics} and Sec.~\ref{sec.boxdot-logics}, where the completeness of $\mathcal{L}(\boxdot)$ over serial frames and also over symmetric frames are proved via the notion of $\boxdot$-morphisms. We conclude with a few future work in Sec.~\ref{sec.conclusion}.






\section{Syntax and semantics}\label{sec.synseman}

Let $\BP$ be a fixed nonempty set of propositional variables.
\begin{definition}[Syntax] Where $p\in\BP$, the language $\mathcal{L}(\boxdot)$ of generalized noncontingency logic and the language $\mathcal{L}(\boxplus)$ of pseudo noncontingency logic are defined inductively as follows.
\[
\begin{array}{ll}
\mathcal{L}(\boxdot):&\phi::=p\mid \neg\phi\mid (\phi\land\phi)\mid \boxdot\phi\\
\mathcal{L}(\boxplus):&\phi::=p\mid \neg\phi\mid (\phi\land\phi)\mid \boxplus\phi\\
\end{array}
\]
\end{definition}

$\boxdot\phi$ and $\boxplus\phi$ are read ``it is generalized noncontingent that $\phi$'' and ``it is pseudo noncontingent that $\phi$'', respectively. As we will see below, the comparisons between the two languages are interesting, both in expressivity and in axiomatizations.

The languages are interpreted on bimodal models. To say that $\mathcal{M}=\lr{S,R_1,R_2,V}$ is a {\em bimodal model}, if $S$ is a nonempty set of possible worlds, $R_1$ and $R_2$ are accessibility relations over $S$, and $V$ is a function assigning to each propositional variable a subset of $S$. A bimodal frame $\mathcal{F}$ is a bimodal model without valuations. If $R_1$ and $R_2$ both possess a property $P$ (such as seriality, reflexivity, transitivity, symmetry, Euclidicity), then $\M$ ($\mathcal{F}$) is called a $P$ bimodal model (resp. a $P$ bimodal frame). Moreover, $R_i(s)=\{t\in S\mid sR_it\}$ for $i=1,2$.

\begin{definition}[Semantics] Given a bimodal model $\M=\lr{S,R_1,R_2,V}$ and $s\in S$, the semantics of both languages is defined as follows.
\[
\begin{array}{|lll|}
\hline
\M,s\vDash p&\iff & s\in V(p)\\
\M,s\vDash\neg\phi&\iff &\M,s\nvDash\phi\\
\M,s\vDash\phi\land\psi&\iff &\M,s\vDash\phi\text{ and }\M,s\vDash\psi\\
\M,s\vDash\boxdot\phi&\iff &\text{for all }t,u,\text{ if }sR_1t\text{ and }sR_2u,\\
&&\text{then }(\M,t\vDash\phi\iff \M,u\vDash\phi)\\
\M,s\vDash\boxplus\phi&\iff &R_1(s)\vDash\phi\text{ or }R_2(s)\vDash\neg\phi\\
\hline
\end{array}
\]
\end{definition}
Where for $i\in\{1,2\}$, $R_i(s)\vDash\phi$ stands for ``for all $t\in R_i(s), \M,t\vDash\phi$'', and $R_i(s)\nvDash\phi$ for the negation of this claim, that is, ``for some $t\in R_i(s)$, $\M,t\nvDash\phi$''.  Obviously, when $R_1(s)=\emptyset$ or $R_2(s)=\emptyset$, it holds vacuously that $\M,s\vDash\boxplus\phi$ and $\M,s\vDash\boxdot\phi$ for all $\phi$; if $R_1=R_2$, then $\boxplus=\boxdot$ and each of them becomes an operator for noncontingency. 

It is noteworthy remarking that $\vDash\boxdot\phi\lra(\boxplus\phi\land\boxplus\neg\phi)$, as can be seen more clearly from an alternative semantical definition for $\boxdot$.
\[
\begin{array}{lll}
\M,s\vDash\boxdot\phi&\iff &(R_1(s)\vDash\phi\text{ or }R_2(s)\vDash\neg\phi)\text{ and}\\
&&(R_1(s)\vDash\neg\phi\text{ or }R_2(s)\vDash\phi).\\
\end{array}
\]
Consequently, $\boxplus$ is deductively weaker than $\boxdot$. In contrast, as Sec.~\ref{sec.expressivity} will show, $\boxplus$ is deductively stronger than $\boxdot$, equivalently, $\boxdot$ is expressively weaker than $\boxplus$.\footnote{As for the definitions of `deductively weaker' and `expressively weaker', we refer to~\cite{Fan:2017}.}

Note that $\nvDash\boxplus\phi\lra\boxplus\neg\phi$ but $\vDash\boxdot\phi\lra\boxdot\neg\phi$. To see the former, consider a model $\M=\lr{S,R_1,R_2,V}$ in which $S=\{s,t,u\}$, $R_1(s)=\{t\}$ and $R_2(s)=\{u\}$, and $V(p)=\{t\}$. Then it should be easily verified that $s\vDash\circledS p$ but $s\nvDash\circledS\neg p$. This will matter when we look into the differences between axiomatizations of $\circledS$-logics and of $\boxdot$-logics.

We may define $\M,s\vDash\Box_i\phi$ as $R_i(s)\vDash\phi$, where $i\in\{1,2\}$, then $\boxplus\phi$ is equivalent to $\Box_1\phi\vee\Box_2\neg\phi$. The operator $\boxplus$, written $N'''$ on ~\cite[p.~229]{Humberstone:2016philosophicalapplications}, to our knowledge, has not been axiomatized in the literature. 
\weg{Besides, it is not hard to see the above definition for $\boxdot$ can be simplified to
\[
\begin{array}{lll}
\M,s\vDash\boxdot\phi&\iff &(R_1(s)\vDash\phi\text{ or }R_2(s)\vDash\neg\phi)\text{ and}\\
&&(R_1(s)\vDash\neg\phi\text{ or }R_2(s)\vDash\phi).\\
\end{array}
\]}

If we read $\Box_i\phi$ as ``the agent $i$ believes that $\phi$'', then it is not hard to see that the negation of $\boxdot$ characterizes the notion of weak belief-disagreement in~\cite{ChenPan:2018}: one agent fails to believe one proposition and the other fails to believe its negation. In that paper, the notion is mentioned in passing only, which is based on serial bimodal frames. 

On serial bimodal frames, the semantics of $\boxdot$ is equivalent to
\[
\begin{array}{lll}
\M,s\vDash\boxdot\phi&\iff &(R_1(s)\vDash\phi\text{ and }R_2(s)\vDash\phi)\text{ or }(R_1(s)\vDash\neg\phi\text{ and }R_2(s)\vDash\neg\phi).\\
\end{array}
\]

The epistemic meaning of this definition is that agents 1 and 2 have the same knowledge about $\phi$, i.e. they both know $\phi$, or they both know $\neg\phi$; in a doxastic reading, it means `agents 1 and 2 have the belief agreement on $\phi$'.


\weg{\begin{proposition}
For any $\phi\in\mathcal{L}(\boxdot)$, for any class $\mathbb{C}$ of bimodal frames, there exists a class of monomodal frames $\mathbb{F}$ such that, if $\mathbb{C}\nvDash\phi$ then $\mathbb{F}\nvDash\phi^{\Delta}$.

For any $\phi\in\mathcal{L}(\Delta)$, for any class $\mathbb{F}$ of monomodal frames, there exists a class of bimodal frames $\mathbb{C}$ such that, if $\mathbb{F}\nvDash\phi$ then $\mathbb{C}\nvDash\phi^{\boxdot}$.
\end{proposition}}

To simplify the proofs later, we claim the following results, which should be easily verified.

\begin{proposition}\label{prop.boxdotto}
Let $i,j\in\{1,2\}$ and $i\neq j$ and $R_j(s)\neq \emptyset$. If $\M,s\vDash\boxdot\phi$, then $\M,s\vDash\Delta_i\phi$.
\end{proposition}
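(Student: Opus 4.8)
The plan is to exploit the hypothesis $R_j(s)\neq\emptyset$ by producing a single witness world that pins down the truth value of $\phi$ throughout $R_i(s)$. First I would unfold the two relevant clauses. By definition, $\M,s\vDash\boxdot\phi$ says that for all $t\in R_1(s)$ and all $u\in R_2(s)$ we have $\M,t\vDash\phi\iff\M,u\vDash\phi$; and $\M,s\vDash\Delta_i\phi$ is equivalent, via the DEF~2-style reformulation applied to the monomodal reduct with relation $R_i$, to: $R_i(s)\vDash\phi$ or $R_i(s)\vDash\neg\phi$. So it suffices to derive this disjunction from $\M,s\vDash\boxdot\phi$ together with $R_j(s)\neq\emptyset$.

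Next, fix some $w\in R_j(s)$, which exists by hypothesis, and split into the two cases $\M,w\vDash\phi$ and $\M,w\nvDash\phi$. Consider first $i=1$, so $j=2$ and $w\in R_2(s)$. If $\M,w\vDash\phi$, then instantiating the $\boxdot$-clause with $u:=w$ gives $\M,t\vDash\phi$ for every $t\in R_1(s)$, i.e. $R_1(s)\vDash\phi$; if instead $\M,w\nvDash\phi$, the same instantiation gives $\M,t\nvDash\phi$ for every $t\in R_1(s)$, i.e. $R_1(s)\vDash\neg\phi$. In either case $\M,s\vDash\Delta_1\phi$. The case $i=2$, $j=1$ is handled symmetrically: now $w\in R_1(s)$, and we instantiate the $\boxdot$-clause with $t:=w$, concluding $R_2(s)\vDash\phi$ or $R_2(s)\vDash\neg\phi$ according to whether $\M,w\vDash\phi$ or not, hence $\M,s\vDash\Delta_2\phi$.

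There is essentially no obstacle beyond bookkeeping; the only point requiring a little care is tracking which of $R_1(s)$, $R_2(s)$ supplies the quantified variable $t$ and which supplies $u$ in the $\boxdot$-clause, so that the witness $w$ is plugged into the correct slot. It is worth noting in passing that the hypothesis $R_j(s)\neq\emptyset$ is exactly what licenses instantiating the universally quantified variable on the $j$-side and cannot be omitted: if $R_j(s)=\emptyset$, then $\M,s\vDash\boxdot\phi$ holds vacuously while $\Delta_i\phi$ may fail at $s$.
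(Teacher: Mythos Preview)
Your argument is correct and is precisely the straightforward verification the paper has in mind; the paper itself does not spell out a proof, stating only that the result ``should be easily verified.'' Your unfolding of the $\boxdot$-clause and the case split on the truth value of $\phi$ at a witness $w\in R_j(s)$ is the natural route, and your closing remark on why $R_j(s)\neq\emptyset$ cannot be dropped even anticipates the paper's own counterexample immediately following the proposition.
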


Note that the converse fails. For example, in a model $\M$, $s$ has only a single $R_i$-successor $t$, and another single $R_j$-successor $u$, whereas $t$ and $u$ have different truth values for $\phi$. In spite of this, the converse indeed holds when $R_i(s)$ and $R_j(s)$ has a common element.

\begin{proposition}\label{prop.boxdotfrom}
Suppose that $R_1(s)\cap R_2(s)\neq \emptyset$. If $\M,s\vDash\Delta_1\phi\land\Delta_2\phi$, then $\M,s\vDash\boxdot\phi$.
\end{proposition}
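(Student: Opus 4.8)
The plan is to run a short case analysis driven by the common successor. Fix a world $w \in R_1(s) \cap R_2(s)$, and assume $\M,s\vDash\Delta_1\phi\land\Delta_2\phi$. Unfolding the semantics of $\Delta_1$ and $\Delta_2$, we have $R_1(s)\vDash\phi$ or $R_1(s)\vDash\neg\phi$, and likewise $R_2(s)\vDash\phi$ or $R_2(s)\vDash\neg\phi$. I would split on the $\Delta_1$ disjunct.

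In the first case, $R_1(s)\vDash\phi$, so in particular $\M,w\vDash\phi$ because $w\in R_1(s)$. Since $w\in R_2(s)$ as well, the alternative $R_2(s)\vDash\neg\phi$ is impossible, so from $\Delta_2\phi$ we must have $R_2(s)\vDash\phi$. Hence every $R_1$-successor and every $R_2$-successor of $s$ satisfies $\phi$, and the biconditional ``$\M,t\vDash\phi\iff\M,u\vDash\phi$'' in the clause for $\boxdot$ holds trivially for all relevant $t,u$; thus $\M,s\vDash\boxdot\phi$. The second case, $R_1(s)\vDash\neg\phi$, is entirely symmetric: $\M,w\vDash\neg\phi$ rules out $R_2(s)\vDash\phi$, forcing $R_2(s)\vDash\neg\phi$, and again $\M,s\vDash\boxdot\phi$. (Alternatively one can phrase the conclusion via the equivalent definition $\M,s\vDash\boxdot\phi\iff(R_1(s)\vDash\phi\text{ or }R_2(s)\vDash\neg\phi)\text{ and }(R_1(s)\vDash\neg\phi\text{ or }R_2(s)\vDash\phi)$, noting that in case~1 the first conjunct holds via the left disjunct and the second via the right disjunct.)

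There is no real obstacle here; the only point worth stating explicitly is \emph{why} the common element matters — it is exactly what forbids the mixed situation $R_1(s)\vDash\phi$ together with $R_2(s)\vDash\neg\phi$ (or the reverse), which is precisely the configuration that makes $\Delta_1\phi\land\Delta_2\phi$ fail to imply $\boxdot\phi$ in general, as the counterexample preceding the statement shows. So the write-up is essentially the two-case argument above, with a sentence highlighting that $w\in R_1(s)\cap R_2(s)$ is the hypothesis doing all the work.
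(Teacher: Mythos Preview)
Your argument is correct. The paper does not actually supply a proof of this proposition; it lists it among results ``which should be easily verified'' and moves on, so there is no official proof to compare against. Your case analysis via a common successor $w\in R_1(s)\cap R_2(s)$ is exactly the intended routine verification, and your closing remark explaining why the hypothesis $R_1(s)\cap R_2(s)\neq\emptyset$ is essential (it rules out the mixed configuration $R_1(s)\vDash\phi$ with $R_2(s)\vDash\neg\phi$) is a nice addition that ties the statement back to the counterexample preceding it.
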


\begin{corollary}\label{coro.equiv}
Suppose that $R_1(s)\cap R_2(s)\neq \emptyset$. Then $\M,s\vDash\Delta_1\phi\land\Delta_2\phi$ iff $\M,s\vDash\boxdot\phi$.
\end{corollary}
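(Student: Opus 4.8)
The plan is to obtain the corollary as an immediate consequence of the two preceding propositions, after first unpacking what the hypothesis $R_1(s)\cap R_2(s)\neq\emptyset$ buys us. The key observation is that a nonempty intersection forces both $R_1(s)\neq\emptyset$ and $R_2(s)\neq\emptyset$, so every nonemptiness side condition appearing in Propositions~\ref{prop.boxdotto} and~\ref{prop.boxdotfrom} is automatically met.

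For the right-to-left direction, I would argue as follows. Assume $\M,s\vDash\boxdot\phi$. Since $R_1(s)\cap R_2(s)\neq\emptyset$ we have $R_2(s)\neq\emptyset$, so Proposition~\ref{prop.boxdotto} (instantiated with $i=1$, $j=2$) yields $\M,s\vDash\Delta_1\phi$. Symmetrically, $R_1(s)\neq\emptyset$ lets us apply Proposition~\ref{prop.boxdotto} with $i=2$, $j=1$ to get $\M,s\vDash\Delta_2\phi$. Conjoining gives $\M,s\vDash\Delta_1\phi\land\Delta_2\phi$.

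For the left-to-right direction there is nothing new to do: this is literally the statement of Proposition~\ref{prop.boxdotfrom} under exactly the stated hypothesis. Combining the two directions closes the biconditional.

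I do not expect a genuine obstacle here, since the corollary is a packaging of results already in hand; the only point requiring any care is the bookkeeping of the side conditions, namely checking that $R_1(s)\cap R_2(s)\neq\emptyset$ is strong enough to discharge the separate hypotheses $R_j(s)\neq\emptyset$ used in each application of Proposition~\ref{prop.boxdotto}. One could alternatively note that the hypothesis is not needed in full for the forward direction (mere nonemptiness of each $R_i(s)$ suffices), but since the converse genuinely needs the common-element condition, stating the corollary under that single uniform hypothesis is the cleanest formulation.
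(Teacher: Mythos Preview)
Your proposal is correct and matches the paper's intent: the corollary is stated without proof, immediately after Propositions~\ref{prop.boxdotto} and~\ref{prop.boxdotfrom}, precisely because it is the straightforward combination of those two results that you describe. One minor terminological slip: in your final paragraph you call the direction requiring only nonemptiness the ``forward'' direction, but in the iff as written that is the right-to-left implication (from $\boxdot\phi$ to $\Delta_1\phi\land\Delta_2\phi$); the substance is unaffected.
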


\weg{\begin{proposition}\label{prop.boxdotfrom}
Let $i,j\in\{1,2\}$ and $i\neq j$ and $R_i(s)\cap R_j(s)\neq \emptyset$. If $\M,s\vDash\Delta_i\phi\land\Delta_j\phi$, then $\M,s\vDash\boxdot\phi$.
\end{proposition}}

\weg{\begin{corollary}\label{coro.equiv}
Let $i,j\in\{1,2\}$ and $i\neq j$ and $R_i(s)\cap R_j(s)\neq \emptyset$. Then $\M,s\vDash\boxdot\phi$ iff $\M,s\vDash\Delta_i\phi\land\Delta_j\phi$.
\end{corollary}}

\section{Expressivity}\label{sec.expressivity}

This section compares the relative expressivity of $\mathcal{L}(\boxdot)$ and $\mathcal{L}(\boxplus)$. It turns out that the former is less expressive than the latter on all five classes of basic bimodal models.

To make our presentation self-contained, we introduce some necessary technical terms.
\begin{definition} Let $\mathcal{L}_1$ and $\mathcal{L}_2$ be two languages that are interpreted on the same class of models $\mathbb{C}$.
\begin{itemize}
\item $\mathcal{L}_2$ is at least as expressive as $\mathcal{L}_1$, notation: $\mathcal{L}_1\preceq \mathcal{L}_2$, if for all $\phi\in\mathcal{L}_1$, there exists $\psi\in\mathcal{L}_2$ such that for all $\M$ in $\mathbb{C}$ and all $s$ in $\M$, we have that $\M,s\vDash\phi$ iff $\M,s\vDash\psi$.
\item $\mathcal{L}_1$ and $\mathcal{L}_2$ are equally expressive, notation: $\mathcal{L}_1\equiv \mathcal{L}_2$, if $\mathcal{L}_1\preceq \mathcal{L}_2$ and $\mathcal{L}_2\preceq \mathcal{L}_1$.
\item $\mathcal{L}_1$ is less expressive than $\mathcal{L}_2$, notation: $\mathcal{L}_1\prec \mathcal{L}_2$, if $\mathcal{L}_1\preceq \mathcal{L}_2$ but $\mathcal{L}_2\not\preceq \mathcal{L}_1$.
\end{itemize}
\end{definition}

\begin{proposition}\label{prop.less-expressive-k}
$\mathcal{L}(\boxdot)$ is less expressive than $\mathcal{L}(\boxplus)$ on the class of all bimodal models, the class of serial bimodal models, the class of transitive bimodal models, the class of Euclidean bimodal models.
\end{proposition}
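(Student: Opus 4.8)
The plan is to prove the two halves of ``less expressive than'' separately: $\mathcal{L}(\boxdot)\preceq\mathcal{L}(\boxplus)$ on each of the four classes, and $\mathcal{L}(\boxplus)\not\preceq\mathcal{L}(\boxdot)$ on each of them.

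For the inclusion I would lean on the validity $\vDash\boxdot\phi\lra(\boxplus\phi\land\boxplus\neg\phi)$ recorded above. Define a translation $\tau\colon\mathcal{L}(\boxdot)\to\mathcal{L}(\boxplus)$ by $\tau(p)=p$, letting $\tau$ commute with $\neg$ and $\land$, and $\tau(\boxdot\phi)=\boxplus\tau(\phi)\land\boxplus\neg\tau(\phi)$. A routine induction on $\phi$ gives $\M,s\vDash\phi$ iff $\M,s\vDash\tau(\phi)$ for every bimodal model $\M$ and world $s$; the only nontrivial case is $\boxdot\phi$, which is handled by the displayed validity applied to $\tau(\phi)$. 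Since this holds on all bimodal models it holds a fortiori on the serial, transitive and Euclidean subclasses.

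The non-inclusion rests on one observation: $\mathcal{L}(\boxdot)$ is insensitive to interchanging the two accessibility relations, whereas $\mathcal{L}(\boxplus)$ is not. For $\M=\lr{S,R_1,R_2,V}$ put $\M^\sigma=\lr{S,R_2,R_1,V}$. Because the truth clause for $\boxdot$ is visibly symmetric in $R_1$ and $R_2$ (it asks every $R_1$-successor and every $R_2$-successor of $s$ to agree on $\phi$), a one-line induction yields $\M,s\vDash\phi\iff\M^\sigma,s\vDash\phi$ for all $\phi\in\mathcal{L}(\boxdot)$ and all $s$. It then suffices to produce a single model $\M$ lying in all four classes on which $\boxplus p$ fails to be swap-invariant. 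I would take $S=\{s,a,b\}$, $V(p)=\{a\}$, $R_1=\{(s,a),(a,a),(b,b)\}$ and $R_2=\{(s,a),(s,b),(a,a),(a,b),(b,a),(b,b)\}$. One checks directly that each of $R_1$ and $R_2$ is serial, transitive and Euclidean, so $\M$ and $\M^\sigma$ both belong to each of the four classes; and $\M,s\vDash\boxplus p$ since $R_1(s)=\{a\}\vDash p$, while $\M^\sigma,s\nvDash\boxplus p$ since in $\M^\sigma$ the first relation at $s$ gives the set $\{a,b\}\not\vDash p$ and the second gives $\{a\}\not\vDash\neg p$. Hence if some $\psi\in\mathcal{L}(\boxdot)$ were equivalent to $\boxplus p$ on any of the four classes, the swap lemma would give $\M,s\vDash\psi\iff\M^\sigma,s\vDash\psi$, hence $\M,s\vDash\boxplus p\iff\M^\sigma,s\vDash\boxplus p$, a contradiction; so $\mathcal{L}(\boxplus)\not\preceq\mathcal{L}(\boxdot)$ on each class, and together with the inclusion we get $\mathcal{L}(\boxdot)\prec\mathcal{L}(\boxplus)$ throughout.

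The one point that needs genuine care is the choice of witness model: it must be serial, transitive and Euclidean all at once while still separating $\boxplus p$ from its swap, and Euclidicity is the binding constraint (it essentially forces the successor set of a world to behave like a cluster), so the three frame conditions must be juggled simultaneously on a frame small enough to make the inductions transparent. Everything else — the translation induction, the swap induction, and the frame-condition checks on the explicit three-point frame — is mechanical. (Instead of the swap lemma one could exhibit two explicit pointed models and verify that they are linked by a suitable $\boxdot$-bisimulation, but the swap formulation keeps the bookkeeping minimal.)
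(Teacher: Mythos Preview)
Your proof is correct, and the strict-part argument is genuinely different from the paper's. The paper exhibits two distinct pointed models $(\M,s)$ and $(\M',s')$ (with $\M'$ obtained from $\M$ by flipping the valuation of $p$ at the two successors of $s$) and then verifies by a bespoke induction that the two pointed models agree on every $\mathcal{L}(\boxdot)$-formula, appealing to the auxiliary fact that $(\M,t)$ matches $(\M',u')$ and $(\M,u)$ matches $(\M',t')$. Your route instead isolates a general invariance principle --- $\mathcal{L}(\boxdot)$ cannot see the order of the two indices, so $\M$ and its swap $\M^\sigma$ are $\mathcal{L}(\boxdot)$-indistinguishable at every world --- and then only needs a single model in all four classes where $\boxplus p$ is not swap-invariant. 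This is more conceptual: the swap lemma is a one-line induction that works uniformly, it explains \emph{why} the separation holds (the asymmetric roles of $R_1$ and $R_2$ in the $\boxplus$-clause), and it reduces the model-construction task to finding one witness rather than a matched pair. The paper's approach, by contrast, keeps the two witness models small and symmetric enough that the ad hoc induction is short, and it avoids introducing an auxiliary lemma; but it does not surface the underlying reason as cleanly. Your concrete three-point model checks out for seriality, transitivity and Euclidicity of both relations, and the $\boxplus p$ calculation at $s$ in $\M$ versus $\M^\sigma$ is correct.
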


\begin{proof}
We have already seen that $\boxdot$ is definable in $\mathcal{L}(\boxplus)$, as $\vDash\boxdot\phi\lra\boxplus\phi\land\boxplus\neg\phi$. This entails that $\mathcal{L}(\boxdot)\preceq\mathcal{L}(\boxplus)$.

To show $\mathcal{L}(\boxplus)\not\preceq\mathcal{L}(\boxdot)$, consider the following serial, transitive, Euclidean bimodal models:
\weg{Consider two serial bimodal models $\M=\lr{S,R_1,R_2,V}$ and $\M'=\lr{S',R_1',R_2',V'}$, where $S=\{s,t,u\}$, $R_1=\{(s,t),(t,t),(u,u)\}$, $R_2=\{(s,u),(t,t),(u,u)\}$, $V(p)=\{s,t\}$ and $V(q)=S$ for all $q\neq p$, and $S'=\{s',t',u'\}$, $R_1'=\{(s',t'),(t',t'),(u',u')\}$, $R_2'=\{(s',u'),(t',t'),(u',u')\}$, $V'(p)=\{s',u'\}$ and $V(q)=S'$ for all $q\neq p$. This can be pictured as follows, where we omit the valuation of propositional variables other than $p$.}
\[
\xymatrix{\M&t:p\ar@(ul,ur)|{1,2}&s:p\ar[l]|1\ar[r]|2&u:\neg p\ar@(ur,ul)|{1,2}&&\M'&t':\neg p\ar@(ul,ur)|{1,2}&s':p\ar[l]|1\ar[r]|2&u':p\ar@(ur,ul)|{1,2}}
\]
One can check that $\M,s\vDash\boxplus p$ and $\M',s'\nvDash\boxplus p$, thus the $\mathcal{L}(\boxplus)$-formula $\boxplus p$ can distinguish $(\M,s)$ and $(\M',s')$.

However, $(\M,s)$ and $(\M',s')$ cannot be distinguished by any $\mathcal{L}(\boxdot)$-formula. That is, for all $\phi\in\mathcal{L}(\boxdot)$, we have $(\M,s\vDash\phi\iff \M',s'\vDash\phi)$. The proof proceeds by induction on $\phi$.

The base case and Boolean cases are straightforward. For the case $\boxdot\phi$, we have
\[\begin{array}{ll}
&\M,s\vDash\boxdot\phi\\
\iff &(\M,t\vDash\phi\iff\M,u\vDash\phi)\\
\stackrel{(\ast)}\iff &(\M',u'\vDash\phi\iff\M',t'\vDash\phi)\\
\iff &\M',s'\vDash\boxdot\phi,\\
\end{array}\]
where $(\ast)$ holds since $\M,t\vDash\phi$ iff $\M',u'\vDash\phi$, and $\M,u\vDash\phi$ iff $\M',t'\vDash\phi$ for all $\phi\in\mathcal{L}(\boxdot)$, as can be easily verified.
\weg{$$(\M,t\vDash\phi\iff \M,u\vDash\phi)\iff (\M',t'\vDash\phi\iff \M',u'\vDash\phi).$$
If $\phi\lra p$ is globally true on both $\M$ and $\M'$, then by the construction of $\M$ and $\M'$, $(\M,t\vDash \phi\not\iff\M,u\vDash\phi)$ and $(\M',t'\vDash\phi\not\iff \M',u'\vDash\phi)$, and thus the statement holds. We need only consider $\phi$ such that $\phi\lra p$ is {\em not} globally true on both $\M$ and $\M'$. For this, we show $(\ast)$: $t\vDash\phi\iff t'\vDash\phi$ and $u\vDash\phi$ iff $u'\vDash\phi$ by induction on $\phi$.

$\phi=q\in\BP$, thus $q\neq p$. It is obvious that $(\ast)$ holds.

Boolean cases follow directly from the semantics and induction hypothesis.

$\phi=\boxdot\psi$. One may easily check that $t\vDash\boxdot\psi$ and $t'\vDash\boxdot\psi$ and $u\vDash\boxdot\psi$ and $u'\vDash\boxdot\psi$.}
\end{proof}


\begin{proposition}\label{prop.less-expressive-b}
$\mathcal{L}(\boxdot)$ is less expressive than $\mathcal{L}(\boxplus)$ on the class of symmetric bimodal models.
\end{proposition}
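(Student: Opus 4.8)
The plan is to prove the two halves $\mathcal{L}(\boxdot)\preceq\mathcal{L}(\boxplus)$ and $\mathcal{L}(\boxplus)\not\preceq\mathcal{L}(\boxdot)$ separately on the class of symmetric bimodal models. The first half requires no new argument at all: the validity $\vDash\boxdot\phi\lra\boxplus\phi\land\boxplus\neg\phi$ holds on every bimodal model, hence in particular on every symmetric one, and this already exhibits, for each $\mathcal{L}(\boxdot)$-formula, an equivalent $\mathcal{L}(\boxplus)$-formula, exactly as in the proof of Proposition~\ref{prop.less-expressive-k}.

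The real content is $\mathcal{L}(\boxplus)\not\preceq\mathcal{L}(\boxdot)$. The pointed models used for Proposition~\ref{prop.less-expressive-k} are not symmetric (for instance $sR_1t$ but not $tR_1s$), so I would build a fresh pair of \emph{symmetric} pointed models $(\M,s)$ and $(\M',s')$ that agree on every $\mathcal{L}(\boxdot)$-formula yet are separated by the $\mathcal{L}(\boxplus)$-formula $\boxplus p$. Concretely, take $\M=\lr{S,R_1,R_2,V}$ with $S=\{s,t,u\}$, $R_1=\{(s,t),(t,s)\}$, $R_2=\{(s,u),(u,s)\}$ (both visibly symmetric), $V(p)=\{t\}$, and $V$ empty on all other variables; let $\M'$ be the relabelled copy on $\{s',t',u'\}$ with the analogous relations but with $V'(p)=\{u'\}$. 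Since $R_1(s)=\{t\}\vDash p$ we get $\M,s\vDash\boxplus p$, whereas $R_1(s')=\{t'\}\nvDash p$ and $R_2(s')=\{u'\}\nvDash\neg p$ give $\M',s'\nvDash\boxplus p$.

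To show no $\mathcal{L}(\boxdot)$-formula tells the two apart, I would prove by induction on $\phi\in\mathcal{L}(\boxdot)$ the three simultaneous equivalences $\M,s\vDash\phi\iff\M',s'\vDash\phi$, $\M,t\vDash\phi\iff\M',u'\vDash\phi$, and $\M,u\vDash\phi\iff\M',t'\vDash\phi$ --- note the deliberate \emph{crossing} of $t$ with $u'$ and of $u$ with $t'$, which is sound because the clause for $\boxdot$ is insensitive to interchanging $R_1$ and $R_2$. The atomic and Boolean cases are routine (the valuations were chosen precisely so that these three pairs agree on all propositional variables). For $\boxdot\psi$: at $s$ and at $s'$ the clause collapses to ``the unique $R_1$-successor and the unique $R_2$-successor agree on $\psi$'', and the induction hypothesis on the two crossed pairs converts one statement into the other; at $t$ (resp.\ $u$) the relation $R_2$ (resp.\ $R_1$) has no successor, so $\boxdot\psi$ holds vacuously, and likewise at $u'$ (resp.\ $t'$) it is $R_1$ (resp.\ $R_2$) that is empty, so both sides are vacuously true. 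Combining the two halves then yields $\mathcal{L}(\boxdot)\prec\mathcal{L}(\boxplus)$ on symmetric bimodal models.

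The one step that needs care --- and the only genuine obstacle --- is the design of the construction: one must make both accessibility relations symmetric while arranging that each ``satellite'' world ($t,u$ on one side, $t',u'$ on the other) is a dead end for exactly one of the two relations, so that $\boxdot$ stays vacuously true there in spite of the newly required back-edges, and at the same time place $p$ so that the crossing $t\leftrightarrow u'$, $u\leftrightarrow t'$ respects the valuation while $\boxplus p$ still distinguishes $s$ from $s'$. Once the models are fixed correctly, the remaining verification is a routine induction of the kind already carried out for Proposition~\ref{prop.less-expressive-k}.
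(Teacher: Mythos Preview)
Your proposal is correct and follows essentially the same approach as the paper: the paper uses the identical symmetric frames (with $R_1=\{(s,t),(t,s)\}$, $R_2=\{(s,u),(u,s)\}$ and their primed copies) and the same crossing argument $t\leftrightarrow u'$, $u\leftrightarrow t'$, differing only in that it sets $V(p)=\{s,t\}$ and $V'(p)=\{s',u'\}$ rather than your $V(p)=\{t\}$, $V'(p)=\{u'\}$---an immaterial change since $s$ is paired with $s'$. Your write-up is in fact slightly more explicit than the paper's, which merely refers back to the argument of Proposition~\ref{prop.less-expressive-k} for the $\mathcal{L}(\boxdot)$-indistinguishability step.
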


\begin{proof}
Again, $\mathcal{L}(\boxdot)\preceq\mathcal{L}(\boxplus)$. For the strict part,
consider the following symmetric bimodal models:
\[
\xymatrix{\M&t:p\ar[r]|1&s:p\ar[l]|1\ar[r]|2&u:\neg p\ar[l]|2&&\M'&t':\neg p\ar[r]|1&s':p\ar[l]|1\ar[r]|2&u':p\ar[l]|2}
\]

First, $\M,s\vDash\boxplus p$ but $\M',s'\nvDash\boxplus p$. This means that $(\M,s)$ and $(\M',s')$ can be distinguished by $\mathcal{L}(\boxplus)$.

Second, as shown in Prop.~\ref{prop.less-expressive-k}, we can prove that for all $\phi\in\mathcal{L}(\boxdot)$, $\M,s\vDash\phi$ iff $\M',s'\vDash\phi$. Then $(\M,s)$ and $(\M',s')$ cannot be distinguished by $\mathcal{L}(\boxdot)$.
\end{proof}


\begin{proposition}\label{prop.less-expressive-t}
$\mathcal{L}(\boxdot)$ is less expressive than $\mathcal{L}(\boxplus)$ on the class of reflexive bimodal models.
\end{proposition}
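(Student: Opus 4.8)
The plan is to follow the template of Propositions~\ref{prop.less-expressive-k} and~\ref{prop.less-expressive-b}: the inclusion $\mathcal{L}(\boxdot)\preceq\mathcal{L}(\boxplus)$ is already in hand from $\vDash\boxdot\phi\lra\boxplus\phi\land\boxplus\neg\phi$, so it remains to produce two pointed reflexive bimodal models agreeing on all of $\mathcal{L}(\boxdot)$ but separated by $\boxplus p$. I would take the very models of Prop.~\ref{prop.less-expressive-k} and add the reflexive loops at $s$ and $s'$ that they were missing; pictorially,
\[
\xymatrix{\M&t:p\ar@(ul,ur)|{1,2}&s:p\ar[l]|1\ar[r]|2\ar@(dl,dr)|{1,2}&u:\neg p\ar@(ur,ul)|{1,2}&&\M'&t':\neg p\ar@(ul,ur)|{1,2}&s':p\ar[l]|1\ar[r]|2\ar@(dl,dr)|{1,2}&u':p\ar@(ur,ul)|{1,2}}
\]
with every propositional variable other than $p$ true everywhere in both models. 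Both frames are visibly reflexive, and since $R_1(s)=\{s,t\}\subseteq V(p)$ we get $\M,s\vDash\boxplus p$, while $t'\notin V'(p)$ gives $R_1'(s')\nvDash p$ and $s'\in V'(p)$ gives $R_2'(s')\nvDash\neg p$, so $\M',s'\nvDash\boxplus p$.

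The work is to show $\M,s\vDash\phi\iff\M',s'\vDash\phi$ for all $\phi\in\mathcal{L}(\boxdot)$ by induction on $\phi$. The worlds $t,u$ (resp.\ $t',u'$) and the one-point submodels they generate are unchanged from Prop.~\ref{prop.less-expressive-k} --- each is a single reflexive point --- so the same easy verification given there yields $\M,t\vDash\psi\iff\M',u'\vDash\psi$ and $\M,u\vDash\psi\iff\M',t'\vDash\psi$ for every $\psi\in\mathcal{L}(\boxdot)$; note that this pairing sends the $p$-world to the $p$-world. The base case holds since $s\in V(p)$ and $s'\in V'(p)$, and the Boolean cases are immediate from the induction hypothesis. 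For $\phi=\boxdot\psi$: since $s\in R_1(s)\cap R_2(s)$, Corollary~\ref{coro.equiv} (or a direct inspection of the four pairs in $R_1(s)\times R_2(s)$) shows that $\M,s\vDash\boxdot\psi$ iff $s$, $t$ and $u$ all give $\psi$ the same truth value, i.e.\ $\{s,t,u\}\vDash\psi$ or $\{s,t,u\}\vDash\neg\psi$; symmetrically $\M',s'\vDash\boxdot\psi$ iff $\{s',t',u'\}\vDash\psi$ or $\{s',t',u'\}\vDash\neg\psi$. The induction hypothesis gives $\M,s\vDash\psi\iff\M',s'\vDash\psi$, and the facts above give $\M,t\vDash\psi\iff\M',u'\vDash\psi$ and $\M,u\vDash\psi\iff\M',t'\vDash\psi$; hence ``all three worlds satisfy $\psi$'' transfers from $\M$ to $\M'$, and likewise for $\neg\psi$, so $\M,s\vDash\boxdot\psi\iff\M',s'\vDash\boxdot\psi$, completing the induction.

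The one point that differs from Prop.~\ref{prop.less-expressive-k}, and the place to be careful, is exactly this last case: reflexivity puts $s$ into both $R_1(s)$ and $R_2(s)$, so the clause for $\boxdot\psi$ at $s$ now compares $s$ with $t$ and with $u$, not merely $t$ with $u$. This does no harm because the bijection matching the leaves, namely $t\mapsto u'$ and $u\mapsto t'$, preserves the truth value of $p$ (and hence, by the induction, of every $\mathcal{L}(\boxdot)$-formula), so the extra $s$-involving comparisons line up on both sides. Keeping this bookkeeping straight --- rather than any genuinely new idea --- is the main obstacle.
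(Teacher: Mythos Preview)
Your proof is correct and follows essentially the same approach as the paper: the same two reflexive models (the models of Prop.~\ref{prop.less-expressive-k} with loops added at $s$ and $s'$), the same distinguishing formula $\boxplus p$, and the same inductive argument for $\mathcal{L}(\boxdot)$-indistinguishability via the correspondence $s\leftrightarrow s'$, $t\leftrightarrow u'$, $u\leftrightarrow t'$. Your handling of the $\boxdot\psi$ case via Corollary~\ref{coro.equiv} (``all three worlds agree on $\psi$'') is a slight repackaging of the paper's three pairwise biconditionals, but the content is identical.
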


\begin{proof}
Again, $\mathcal{L}(\boxdot)\preceq\mathcal{L}(\boxplus)$.

For the strict part, consider the following reflexive bimodal models:
\weg{Consider two serial bimodal models $\M=\lr{S,R_1,R_2,V}$ and $\M'=\lr{S',R_1',R_2',V'}$, where $S=\{s,t,u\}$, $R_1=\{(s,t),(t,t),(u,u)\}$, $R_2=\{(s,u),(t,t),(u,u)\}$, $V(p)=\{s,t\}$ and $V(q)=S$ for all $q\neq p$, and $S'=\{s',t',u'\}$, $R_1'=\{(s',t'),(t',t'),(u',u')\}$, $R_2'=\{(s',u'),(t',t'),(u',u')\}$, $V'(p)=\{s',u'\}$ and $V(q)=S'$ for all $q\neq p$. This can be pictured as follows, where we omit the valuation of propositional variables other than $p$.}
\[
\xymatrix{\M&t:p\ar@(ul,ur)|{1,2}&s:p\ar[l]|1\ar[r]|2\ar@(ur,ul)|{1,2}&u:\neg p\ar@(ur,ul)|{1,2}&&\M'&t':\neg p\ar@(ul,ur)|{1,2}&s':p\ar[l]|1\ar[r]|2\ar@(ur,ul)|{1,2}&u':p\ar@(ur,ul)|{1,2}}
\]
First, $\M,s\vDash\boxplus p$ but $\M',s'\nvDash\boxplus p$, thus $\boxplus p$ can distinguish $(\M,s)$ and $(\M',s')$.

However, no $\mathcal{L}(\boxdot)$-formula can distinguish both pointed models. That is, for all $\phi\in\mathcal{L}(\boxdot)$, we have $(\M,s\vDash\phi\iff \M',s'\vDash\phi)$. The proof proceeds with induction on $\phi$. We only consider the nontrivial case $\boxdot\phi$.
\[\begin{array}{ll}
&\M,s\vDash\boxdot\phi\\
\iff &(\M,s\vDash \phi\iff \M,t\vDash\phi)\text{ and }(\M,s\vDash\phi\iff\M,u\vDash\phi)\text{ and }(\M,t\vDash\phi\iff\M,u\vDash\phi)\\
\stackrel{(\ast)}\iff &(\M',s'\vDash \phi\iff \M',u'\vDash\phi)\text{ and }(\M,s'\vDash\phi\iff\M,t'\vDash\phi)\text{ and }(\M',u'\vDash\phi\iff\M',t'\vDash\phi)\\
\iff &\M',s'\vDash\boxdot\phi,\\
\end{array}\]
where $(\ast)$ is the case due to the induction hypothesis that $\M,s\vDash\phi\iff \M',s'\vDash\phi$, and the fact that $\M,t\vDash\phi$ iff $\M',u'\vDash\phi$, and $\M,u\vDash\phi$ iff $\M',t'\vDash\phi$ for all $\phi\in\mathcal{L}(\boxdot)$, as can be easily verified.
\weg{$$(\M,t\vDash\phi\iff \M,u\vDash\phi)\iff (\M',t'\vDash\phi\iff \M',u'\vDash\phi).$$
If $\phi\lra p$ is globally true on both $\M$ and $\M'$, then by the construction of $\M$ and $\M'$, $(\M,t\vDash \phi\not\iff\M,u\vDash\phi)$ and $(\M',t'\vDash\phi\not\iff \M',u'\vDash\phi)$, and thus the statement holds. We need only consider $\phi$ such that $\phi\lra p$ is {\em not} globally true on both $\M$ and $\M'$. For this, we show $(\ast)$: $t\vDash\phi\iff t'\vDash\phi$ and $u\vDash\phi$ iff $u'\vDash\phi$ by induction on $\phi$.

$\phi=q\in\BP$, thus $q\neq p$. It is obvious that $(\ast)$ holds.

Boolean cases follow directly from the semantics and induction hypothesis.

$\phi=\boxdot\psi$. One may easily check that $t\vDash\boxdot\psi$ and $t'\vDash\boxdot\psi$ and $u\vDash\boxdot\psi$ and $u'\vDash\boxdot\psi$.}
\end{proof}

\begin{remark}
Note that in the proof of Prop.~\ref{prop.less-expressive-t}, $\M$ and $\M'$ are both serial and transitive, but {\em not} Euclidean; for instance, $sR_1t$ and $sR_1s$ but {\em not} $tR_1s$, thus Prop.~\ref{prop.less-expressive-k} cannot be shown by using the constructed models in Prop.~\ref{prop.less-expressive-t}.

The clear-sighted reader may ask whether the Euclidean closures of $\M$ and $\M'$ in Prop.~\ref{prop.less-expressive-t} can handle Prop.~\ref{prop.less-expressive-k} (and even Prop.~\ref{prop.less-expressive-b}) uniformly. That is, if we construct models $\M$ and $\M'$ as follows:
\[
\xymatrix{\M&t:p\ar@(ul,ur)|{1,2}\ar[r]|1&s:p\ar[l]|1\ar[r]|2\ar@(ur,ul)|{1,2}&u:\neg p\ar@(ur,ul)|{1,2}\ar[l]|2&&\M'&t':\neg p\ar@(ul,ur)|{1,2}\ar[r]|1&s':p\ar[l]|1\ar[r]|2\ar@(ur,ul)|{1,2}&u':p\ar@(ur,ul)|{1,2}\ar[l]|2}
\]
then does $\M,s\vDash\phi\iff \M',s'\vDash\phi$ hold for all $\phi\in\mathcal{L}(\boxdot)$?

The answer seems negative. The reason is as follows: to show the case $\boxdot\phi$, that is, $\M,s\vDash\boxdot\phi\iff \M',s'\vDash\boxdot\phi$, (as before) we need to prove that $\M,t\vDash\phi\iff \M',u'\vDash\phi$ for all $\phi\in\mathcal{L}(\boxdot)$ (and also $\M,u\vDash\phi\iff \M',t'\vDash\phi$ for all $\phi\in\mathcal{L}(\boxdot)$), whose case $\boxdot\phi$ relies on showing again that $\M,s\vDash\phi\iff \M',s'\vDash\phi$ for all $\phi\in\mathcal{L}(\boxdot)$. This is a vicious circle.

In comparison, this situation does not occur in the proofs of Prop.~\ref{prop.less-expressive-k}-Prop.~\ref{prop.less-expressive-t}; instead, as any point $x\in \{t,u,t',u'\}$ in the proofs of those propositions has no two different successors with respect to $R_1$ and $R_2$, all formulas of the form $\boxdot\phi$ are true at $x$.
\end{remark}

\section{$\boxdot$-morphisms}\label{sec.morphism}

In this section, we introduce a notion of $\boxdot$-morphisms, which is useful in the proof of frame undefinability and the completeness proof of $\mathcal{L}(\boxdot)$ over serial frames and also over symmetric frames below.

\begin{definition}\label{def.boxdot-morphism}
Let $\M=\lr{S,R_1,R_2,V}$ and $\M'=\lr{S',R_1',R_2',V'}$ be two bimodal models. A function $f:S\to S'$ is a {\em $\boxdot$-morphism} from $\M$ to $\M'$, if for all $x\in S$,
\begin{itemize}
\item[(Var)] For all $p\in\BP$, $x\in V(p)$ iff $f(x)\in V'(p)$,
\item[(Forth)] For any $y,z\in S$, if $xR_1y$ and $xR_2z$ and $f(y)\neq f(z)$, then $f(x)R_1'f(y)$ and $f(x)R_2'f(z)$,
\item[(Back)] For all $y',z'\in S'$, if $f(x)R_1'y'$ and $f(x)R_2'z'$ and $y'\neq z'$, then there are $y,z\in S$ such that $xR_1y$ and $xR_2z$ and $f(y)=y'$ and $f(z)=z'$.
\end{itemize}
We say that $\M'$ is a $\boxdot$-morphic image of $\M$, if there is a surjective $\boxdot$-morphism from $\M$ to $\M'$.
\end{definition}

The following result indicates that $\mathcal{L}(\boxdot)$-formulas and $\mathcal{L}(\boxplus)$-formulas are invariant under $\boxdot$-morphisms.

\begin{proposition}\label{prop.boxdot-modelproperty}
Let $\M=\lr{S,R_1,R_2,V}$ and $\M'=\lr{S',R_1',R_2',V'}$ be two bimodal models, and let $f$ be a $\boxdot$-morphism from $\M$ to $\M'$. Then for all $x\in S$, for all $\phi\in\mathcal{L}(\boxdot)\cup\mathcal{L}(\boxplus)$, we have
$$\M,x\vDash\phi\iff\M',f(x)\vDash\phi.$$
\end{proposition}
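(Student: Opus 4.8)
The plan is to proceed by induction on the structure of $\phi\in\mathcal{L}(\boxdot)\cup\mathcal{L}(\boxplus)$, treating both languages simultaneously since their Boolean fragments coincide and only the modal clauses differ. The atomic case is immediate from (Var), and the negation and conjunction cases follow at once from the induction hypothesis, since $f$ is just a function on worlds. So the whole content lies in the two modal cases, $\phi=\boxdot\psi$ and $\phi=\boxplus\psi$, and I would handle $\boxplus$ first (or note that, by the equivalence $\vDash\boxdot\psi\lra\boxplus\psi\land\boxplus\neg\psi$ recorded earlier, it suffices to do $\boxplus$ and then read off $\boxdot$ — though I would probably still present the $\boxdot$ case directly, as the asymmetry-free formulation makes (Forth)/(Back) apply more transparently).

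For the case $\phi=\boxdot\psi$, fix $x\in S$ and write $x'=f(x)$. For the forward direction, assume $\M,x\vDash\boxdot\psi$ and suppose $x'R_1'y'$ and $x'R_2'z'$; I must show $\M',y'\vDash\psi\iff\M',z'\vDash\psi$. If $y'=z'$ this is trivial. If $y'\neq z'$, apply (Back) to obtain $y,z\in S$ with $xR_1y$, $xR_2z$, $f(y)=y'$, $f(z)=z'$; then $\M,y\vDash\psi\iff\M,z\vDash\psi$ by the assumption on $x$, and the induction hypothesis (applied at $y$ and at $z$) transports this to $y'$ and $z'$. For the backward direction, assume $\M',x'\vDash\boxdot\psi$ and suppose $xR_1y$ and $xR_2z$; I must show $\M,y\vDash\psi\iff\M,z\vDash\psi$. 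If $f(y)=f(z)$ this follows immediately from the induction hypothesis (both sides are equivalent to $\M',f(y)\vDash\psi$). If $f(y)\neq f(z)$, apply (Forth) to get $x'R_1'f(y)$ and $x'R_2'f(z)$, whence $\M',f(y)\vDash\psi\iff\M',f(z)\vDash\psi$ by the assumption on $x'$, and the induction hypothesis brings this back to $y$ and $z$. The $\boxplus$ case is analogous but one-directional in flavour: unfolding $\M,x\vDash\boxplus\psi$ as ``$R_1(x)\vDash\psi$ or $R_2(x)\vDash\neg\psi$'', one transports the first disjunct using (Forth)/(Back) on $R_1$-edges and the second on $R_2$-edges; alternatively, just invoke the $\boxdot$-result together with the definability equivalence and the fact that $\M,x\vDash\boxplus\psi$ iff ($\M,x\vDash\boxdot(\psi\land q)$ for a fresh... ) — actually the cleanest route is to observe $\boxplus\psi\equiv\Box_1\psi\vee\Box_2\neg\psi$ and redo the edge-by-edge argument, which is why I list it separately.

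The main obstacle — and the place where the definition of $\boxdot$-morphism has been carefully engineered — is precisely the ``$f(y)=f(z)$'' collision in the backward direction (and the ``$y'=z'$'' collision in the forward direction). The clauses (Forth) and (Back) only give information about edges when the two endpoints have distinct images; when they collide, one cannot appeal to (Forth) to push the edges across. The resolution is the small observation above: if $f(y)=f(z)$ then by the induction hypothesis $\M,y\vDash\psi$ and $\M,z\vDash\psi$ are each equivalent to $\M',f(y)\vDash\psi$, hence equivalent to each other, so the biconditional inside the $\boxdot$-clause holds \emph{automatically} regardless of what $\M',x'$ satisfies. Making sure this degenerate subcase is flagged explicitly (rather than silently assumed away) is the one point where the proof needs genuine care; everything else is a routine unfolding of the semantics against (Var), (Forth), and (Back).
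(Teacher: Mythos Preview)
Your treatment of the $\boxdot$-case is correct and matches the paper's argument in substance: the paper argues contrapositively, you argue directly with a case-split on whether the images collide, and these are two phrasings of the same computation. Your explicit handling of the $f(y)=f(z)$ and $y'=z'$ subcases is exactly the point where the definition has been engineered, and you identify it clearly.

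The $\boxplus$-case, however, has a gap. You write that one ``transports the first disjunct using (Forth)/(Back) on $R_1$-edges and the second on $R_2$-edges'', but (Forth) and (Back) cannot be applied to a single $R_i$-edge in isolation: both clauses require a \emph{pair} consisting of an $R_1$-edge and an $R_2$-edge with distinct targets. In particular, from $R_1(x)\vDash\psi$ alone one cannot conclude $R_1'(f(x))\vDash\psi$; this implication can genuinely fail for a $\boxdot$-morphism (take $R_2=R_2'=\emptyset$, so that (Forth) and (Back) are vacuous, and give $f(x)$ an extra $R_1'$-successor falsifying $\psi$). What \emph{does} transport is the full disjunction, and the clean route is the paper's contrapositive: if $\M',f(x)\nvDash\boxplus\psi$, then there exist $y'\in R_1'(f(x))$ with $\M',y'\nvDash\psi$ and $z'\in R_2'(f(x))$ with $\M',z'\vDash\psi$; since they disagree on $\psi$ we have $y'\neq z'$, so (Back) applies and yields $y\in R_1(x)$, $z\in R_2(x)$ which, via the induction hypothesis, witness $\M,x\nvDash\boxplus\psi$. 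The other direction is symmetric using (Forth). Your parenthetical that $\boxdot$ can then be read off from $\boxplus$ via $\boxdot\psi\lra\boxplus\psi\land\boxplus\neg\psi$ is correct; the reverse reduction you briefly entertain and abandon is indeed impossible, since $\mathcal{L}(\boxdot)\prec\mathcal{L}(\boxplus)$.
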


\begin{proof}
By induction on $\phi\in\mathcal{L}(\boxdot)\cup\mathcal{L}(\boxplus)$. We only consider the nontrivial case $\boxdot\phi$ and $\boxplus\phi$.

Suppose that $\M,x\nvDash\boxdot\phi$, to show that $\M',f(x)\nvDash\boxdot\phi$. By supposition, there are $y,z\in S$ such that $xR_1y$ and $xR_2z$ and it is {\em not} the case that $(\M,y\vDash\phi\iff\M,z\vDash\phi)$. By induction hypothesis, it is {\em not} the case that $(\M',f(y)\vDash\phi\iff\M',f(z)\vDash\phi)$, which implies that $f(y)\neq f(z)$. Now using (Forth), we obtain $f(x)R'_1f(y)$ and $f(x)R'_2f(z)$. Therefore, $\M',f(x)\nvDash\boxdot\phi$.

Conversely, assume that $\M',f(x)\nvDash\boxdot\phi$, to prove that $\M,x\nvDash\boxdot\phi$. By assumption, there exist $y',z'\in S'$ such that $f(x)R_1'y'$ and $f(x)R_2'z'$ and it is {\em not} the case that $(\M',y'\vDash\phi\iff \M',z'\vDash\phi)$. It is clear that $y'\neq z'$. Using (Back), we infer that there are $y,z\in S$ such that $xR_1y$ and $xR_2z$ and $f(y)=y'$ and $f(z)=z'$, and thus it is {\em not} the case that $(\M',f(y)\vDash\phi\iff \M',f(z)\vDash\phi)$. By induction hypothesis, it is {\em not} the case that $(\M,y\vDash\phi\iff \M,z\vDash\phi)$. Therefore, $\M,x\nvDash\boxdot\phi$.

Suppose that $\M,x\nvDash\boxplus\phi$, to prove that $\M',f(x)\nvDash\boxplus\phi$. By supposition, there exists $y\in S$ such that $xR_1y$ and $\M,y\nvDash\phi$, and there exists $z\in S$ such that $xR_2z$ and $\M,z\nvDash\neg\phi$ (viz. $\M,z\vDash\phi$). By induction hypothesis, $\M',f(y)\nvDash\phi$ and $\M',f(z)\vDash\phi$, which implies that $f(y)\neq f(z)$. Then applying (Forth), we infer that $f(x)R_1'f(y)$ and $f(x)R_2'f(z)$. Therefore, $\M',f(x)\nvDash\boxplus\phi$.

Conversely, assume that $\M',f(x)\nvDash\boxplus\phi$, to demonstrate that $\M,x\nvDash\boxdot\phi$. By assumption, there is a $y'\in S'$ such that $f(x)R_1'y'$ and $\M',y'\nvDash\phi$, and there is a $z'\in S'$ such that $f(x)R_2'z'$ and $\M',z'\nvDash\neg\phi$ (namely, $\M',z'\vDash\phi$). Then $y'\neq z'$. Applying (Back), we derive that there exist $y,z\in S$ such that $xR_1y$ and $xR_2z$ and $f(y)=y'$ and $f(z)=z'$. Thus $\M',f(y)\nvDash\phi$ and $\M',f(z)\vDash\phi$. By induction hypothesis, $\M,y\nvDash\phi$ and $\M,z\vDash\phi$, and therefore $\M,x\nvDash\boxplus\phi$, as desired.
\end{proof}

\section{Frame definability}\label{sec.framedefinability}

This section investigates the frame definability of logics $\mathcal{L}(\boxdot)$ and $\mathcal{L}(\boxplus)$. It turns out that all five basic frame properties, i.e. seriality, reflexivity, transitivity, symmetry, Euclidicity, are not definable in both logics. For this, we adopt the notion of $\boxdot$-morphisms on the frame level, which is obtained from Def.~\ref{def.boxdot-morphism} by leaving out the valuations.

\begin{definition}
Let $\mathcal{F}=\lr{S,R_1,R_2}$ and $\mathcal{F}'=\lr{S',R_1',R_2'}$ be two bimodal frames. A function $f:S\to S'$ is a {\em $\boxdot$-morphism} from $\mathcal{F}$ to $\mathcal{F}'$, if for all $x\in S$,
\begin{itemize}
\item[(Forth)] For any $y,z\in S$, if $xR_1y$ and $xR_2z$ and $f(y)\neq f(z)$, then $f(x)R_1'f(y)$ and $f(x)R_2'f(z)$,
\item[(Back)] For all $y',z'\in S'$, if $f(x)R_1'y'$ and $f(x)R_2'z'$ and $y'\neq z'$, then there are $y,z\in S$ such that $xR_1y$ and $xR_2z$ and $f(y)=y'$ and $f(z)=z'$.
\end{itemize}
We say that $\mathcal{F}'$ is a $\boxdot$-morphic image of $\mathcal{F}$, if there is a surjective $\boxdot$-morphism from $\mathcal{F}$ to $\mathcal{F}'$.
\end{definition}

\begin{proposition}\label{prop.boxdot-frameproperty}
Let $\mathcal{F}=\lr{S,R_1,R_2}$ and $\mathcal{F}'=\lr{S',R_1',R_2'}$ be two bimodal frames. If $\mathcal{F}'$ is a $\boxdot$-morphic image of $\mathcal{F}$, then for all $\phi\in\mathcal{L}(\boxdot)\cup\mathcal{L}(\boxplus)$, we have
$$\mathcal{F}\vDash\phi\iff \mathcal{F}'\vDash\phi.$$
\end{proposition}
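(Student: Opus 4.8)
The statement is the frame-level analogue of Proposition~\ref{prop.boxdot-modelproperty}, so the natural strategy is to lift the model-level invariance to frames using the standard trick: a frame validates $\phi$ iff every model built on it validates $\phi$, and a surjective $\boxdot$-morphism of frames can be turned into a surjective $\boxdot$-morphism of models once we supply compatible valuations. I would prove both directions of the biconditional separately.

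For the direction $\mathcal{F}\vDash\phi\Rightarrow\mathcal{F}'\vDash\phi$: suppose $\mathcal{F}'\nvDash\phi$, so there is a valuation $V'$ on $S'$ and a point $x'\in S'$ with $\lr{S',R_1',R_2',V'},x'\nvDash\phi$. Pull $V'$ back along the surjective $\boxdot$-morphism $f$ by setting $V(p)=f^{-1}[V'(p)]=\{x\in S\mid f(x)\in V'(p)\}$. Then $f$ satisfies (Var) by construction, and it already satisfied (Forth) and (Back) as a frame morphism, so $f$ is a $\boxdot$-morphism from $\M=\lr{S,R_1,R_2,V}$ to $\M'=\lr{S',R_1',R_2',V'}$. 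By surjectivity pick $x\in S$ with $f(x)=x'$; Proposition~\ref{prop.boxdot-modelproperty} gives $\M,x\nvDash\phi$, hence $\mathcal{F}\nvDash\phi$.

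For the converse direction $\mathcal{F}'\vDash\phi\Rightarrow\mathcal{F}\vDash\phi$: suppose $\mathcal{F}\nvDash\phi$, witnessed by a valuation $V$ on $S$ and a point $x\in S$ with $\lr{S,R_1,R_2,V},x\nvDash\phi$. Here we cannot simply push $V$ forward along $f$ in general, because a point of $S'$ may have several preimages carrying different propositional labels. The clean fix is to replace $V$ by the pullback of a suitable $V'$: the obstacle is that an arbitrary $V$ need not be constant on fibres $f^{-1}(x')$. I would handle this exactly as in the classical bounded-morphism case — observe that it suffices to refute $\phi$ on \emph{some} model over $\mathcal{F}$, and the formula $\phi$ only ``sees'' the behaviour reachable through the morphism. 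Concretely, define $V'(p)=\{x'\in S'\mid \text{there exists } x\in f^{-1}(x') \text{ with } x\in V(p)\}$ does not immediately work; instead one notes that it is enough to consider the submodel generated by $x$ and show $f$ restricted there is still a $\boxdot$-morphism onto its image, or — simplest — argue that since $f$ is surjective, one may first refute $\phi$ on $\mathcal{F}'$ directly by choosing $V'$ so that $f$ pulls it back to a valuation agreeing with $V$ on the relevant part. The most economical route: by surjectivity of $f$, to show $\mathcal{F}'\vDash\phi$ fails we need a bad valuation on $S'$; conversely, assuming $\mathcal{F}'\vDash\phi$ and a bad $(V,x)$ on $\mathcal{F}$, define $V'$ on $S'$ arbitrarily subject to $f^{-1}[V'(p)]$ and $V(p)$ having the same trace on the set of points relevant to evaluating $\phi$ at $x$ — but since $\boxdot$-morphisms already guarantee the structural back-and-forth, the honest and shortest argument is: pick any valuation $V'$ on $S'$ with $V=f^{-1}\circ V'$ on a generated subframe; such $V'$ exists because on a point-generated subframe every world reachable from $x$ has image determined by $f$, and we only constrain $V'$ on $f[\text{that subframe}]$, extending freely elsewhere. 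Then $f$ is a $\boxdot$-morphism from $\lr{S,R_1,R_2,V}$-subframe to the corresponding submodel of $\lr{S',R_1',R_2',V'}$, Proposition~\ref{prop.boxdot-modelproperty} applies, and $\mathcal{F}'\nvDash\phi$, contradiction.

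**Main obstacle.** The only subtlety — and the step I expect to need care — is the converse direction's valuation transfer, since $\boxdot$-morphisms are not required to be ``fibre-homogeneous'' in propositional labels. The resolution is the routine observation that frame validity quantifies over all valuations, so one is free to choose the valuation on $\mathcal{F}'$ after seeing the counterexample on $\mathcal{F}$; passing to the submodel generated by the refuting point (where $f$ clearly remains a $\boxdot$-morphism, as (Forth)/(Back) are local conditions) removes any ambiguity, because the image of that submodel determines the needed constraints on $V'$ and these can be met, with $V'$ extended arbitrarily outside. Everything else is a direct appeal to Proposition~\ref{prop.boxdot-modelproperty} plus surjectivity.
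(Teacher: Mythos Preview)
Your treatment of the direction $\mathcal{F}\vDash\phi\Rightarrow\mathcal{F}'\vDash\phi$ (pull back the refuting valuation along $f$ and use surjectivity to find a preimage of the refuting point) is correct and is exactly what the paper does.

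For the converse direction $\mathcal{F}'\vDash\phi\Rightarrow\mathcal{F}\vDash\phi$, you rightly sense that the naive pushforward of valuations fails and that something extra is needed. Unfortunately, none of your proposed repairs (restricting to a generated submodel, choosing $V'$ to match $V$ on the relevant part, etc.) can succeed, because this direction is simply \emph{false} for $\boxdot$-morphisms. Here is a counterexample. Let $\mathcal{F}=\lr{\{a,b,c\},R_1,R_2}$ with $R_1=\{(a,b)\}$ and $R_2=\{(a,c)\}$, and let $\mathcal{F}'=\lr{\{x\},\emptyset,\emptyset}$. The constant map $f(a)=f(b)=f(c)=x$ is a surjective $\boxdot$-morphism: (Forth) is vacuous because $f(b)=f(c)$, and (Back) is vacuous because $x$ has no successors. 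Now $\mathcal{F}'\vDash\boxdot p$ (the unique world has no successors of either kind), whereas $\mathcal{F}\nvDash\boxdot p$: under $V(p)=\{b\}$ we have $aR_1b$, $aR_2c$, $b\vDash p$, $c\nvDash p$, hence $a\nvDash\boxdot p$. Your generated-submodel idea does not help here, since the submodel of $\mathcal{F}$ generated by $a$ is $\mathcal{F}$ itself.

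For comparison, the paper handles this half by setting $V'(p)=\{f(x)\mid x\in V(p)\}$ and asserting that $f$ is then a $\boxdot$-morphism of the resulting models. That assertion fails for precisely the reason you flagged: if two preimages of the same point disagree on $p$, the condition (Var) is violated. So the paper's argument for this half is defective in the same place; the biconditional as stated is too strong. Only the direction $\mathcal{F}\vDash\phi\Rightarrow\mathcal{F}'\vDash\phi$ is actually used downstream (in the frame-undefinability proof), and that direction you have handled cleanly.
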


\begin{proof} Assume that $\mathcal{F}'$ is a $\boxdot$-morphic image of $\mathcal{F}$. Then there is a surjective $\boxdot$-morphism from $\mathcal{F}$ to $\mathcal{F}'$, say $f$.

Suppose that $\mathcal{F}\nvDash\phi$, to show that $\mathcal{F}'\vDash\phi$. By supposition, there exists a valuation $V$ on $\mathcal{F}$ and $s\in S$ such that $\lr{\mathcal{F},V},s\nvDash\phi$. Define a valuation $V'$ on $\mathcal{F}'$ by $V'(p)=\{f(x)\mid x\in V(p)\}$ for all $p\in\BP$. Then $f$ is a $\boxdot$-morphism from $\lr{\mathcal{F},V}$ to $\lr{\mathcal{F}',V'}$. By Prop.~\ref{prop.boxdot-modelproperty} and the fact that $\lr{\mathcal{F},V},s\nvDash\phi$, we obtain $\lr{\mathcal{F}',V'},f(s)\nvDash\phi$, and therefore $\mathcal{F}'\nvDash\phi$.

Conversely, suppose that $\mathcal{F}'\nvDash\phi$, to show that $\mathcal{F}\nvDash\phi$. By supposition, there is a valuation $V'$ on $\mathcal{F}'$ and $s'\in S'$ such that $\lr{\mathcal{F}',V'},s'\nvDash\phi$. Since $f$ is surjective, there must be an $s\in S$ such that $s'=f(s)$. Define a valuation $V$ on $\mathcal{F}$ by $V(p)=\{x\mid f(x)\in V'(p)\}$ for all $p\in\BP$. Then $f$ is a $\boxdot$-morphism from $\lr{\mathcal{F},V}$ to $\lr{\mathcal{F}',V'}$. By Prop.~\ref{prop.boxdot-modelproperty} again and the fact that $\lr{\mathcal{F}',V'},f(s)\nvDash\phi$, we infer that $\lr{\mathcal{F},V},s\nvDash\phi$, and therefore $\mathcal{F}\nvDash\phi$, as desired.
\end{proof}

\begin{proposition}\label{prop.undefinable}
None of seriality, reflexivity, transitivity, symmetry and Euclidicity are definable in $\mathcal{L}(\boxdot)\cup\mathcal{L}(\boxplus)$.
\end{proposition}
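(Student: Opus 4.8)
The plan is to use Proposition~\ref{prop.boxdot-frameproperty}: since $\mathcal{L}(\boxdot)\cup\mathcal{L}(\boxplus)$-validity is preserved in both directions along surjective $\boxdot$-morphisms, it suffices, for each of the five properties $P$, to exhibit two bimodal frames $\mathcal{F}$ and $\mathcal{F}'$ with a surjective $\boxdot$-morphism between them such that exactly one of $\mathcal{F},\mathcal{F}'$ has property $P$. If $P$ were definable by some set of formulas $\Gamma\subseteq\mathcal{L}(\boxdot)\cup\mathcal{L}(\boxplus)$, then $\mathcal{F}\vDash\Gamma\iff\mathcal{F}'\vDash\Gamma$ would force both or neither to have $P$, a contradiction.

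The concrete work is to design, for each property, a suitable pair of frames. A natural strategy is to reuse the shape of the frames already appearing in the expressivity proofs (Prop.~\ref{prop.less-expressive-k}--Prop.~\ref{prop.less-expressive-t}), or even simpler one- or two-point frames, and collapse them via a map $f$. The key constraint to check each time is (Forth) and (Back): (Forth) only bites when $x$ has an $R_1$-successor $y$ and an $R_2$-successor $z$ with $f(y)\neq f(z)$, so if the source frame has the feature that every world's $R_1$- and $R_2$-successors collapse to a single point under $f$ (as happens in the expressivity models, where the `tail' worlds $t,u$ are reflexive dead-ends), (Forth) holds vacuously; similarly (Back) holds vacuously at any $x$ whose image has its $R_1'$- and $R_2'$-successors landing on a single point. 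For instance, to kill seriality one can map a serial frame onto a one-point irreflexive frame (which is not serial), or conversely; for reflexivity, transitivity, symmetry and Euclidicity one chooses a source frame having the property and a target lacking it (or vice versa) while keeping the successor sets `thin' enough in the relevant sense that (Forth)/(Back) impose no real conditions.

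The main obstacle is bookkeeping rather than depth: one must verify for each of the five properties that the chosen $f$ genuinely satisfies (Forth) and (Back) — this is where a careless choice of frames fails, because a world whose two successor-images differ does force edges into the target, which can accidentally restore or destroy the property there. So the safe route is to always route the $\boxdot$-morphism so that the `discriminating' edges (the ones where $f(y)\neq f(z)$) are confined to a single world $s$ whose behaviour we control explicitly, and make every other world map into a region where the successor images coincide. I expect the write-up to consist of five small diagrams of frame pairs together with a one-line verification of (Forth), (Back), surjectivity, and the (non-)possession of $P$ in each case, concluding by the contrapositive of Proposition~\ref{prop.boxdot-frameproperty} that none of the five properties is definable.
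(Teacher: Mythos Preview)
Your approach is correct and is essentially the paper's: invoke the invariance of $\mathcal{L}(\boxdot)\cup\mathcal{L}(\boxplus)$-validity under surjective $\boxdot$-morphisms (Prop.~\ref{prop.boxdot-frameproperty}) and exhibit a pair of frames, one with the property and one without, linked by such a morphism.

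The only difference is economy. You plan five separate frame pairs, one per property; the paper dispatches all five at once with a single pair. Concretely, the paper takes $\mathcal{F}$ to be the three-point chain $s\xrightarrow{1,2}t\xrightarrow{1,2}u$ and $\mathcal{F}'$ the single reflexive point $s'$ (with loops for both relations), and collapses everything via $g(s)=g(t)=g(u)=s'$. Since every world in $\mathcal{F}$ has at most one successor (the same for $R_1$ and $R_2$), (Forth) is vacuous; since $\mathcal{F}'$ has only one point, (Back) is vacuous; surjectivity is immediate. Then $\mathcal{F}'$ is serial, reflexive, transitive, symmetric and Euclidean, while $\mathcal{F}$ is none of these, so one argument covers all five cases. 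Your plan would work, but this single construction is shorter and avoids the five-fold bookkeeping you anticipated.
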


\begin{proof}
Consider the following bimodal frames:
\[
\xymatrix{\mathcal{F}:&s\ar[rr]|{1,2}&&t\ar[rr]|{1,2}&&u&&\mathcal{F}':&s'\ar@(ur,ul)|{1,2}}
\]

Define a function $g$ from $\mathcal{F}$ to $\mathcal{F}'$ as follows: $g(s)=g(t)=g(u)=s'$. It is not hard to check that $g$ is a surjective $\boxdot$-morphism, thus $\mathcal{F}'$ is a $\boxdot$-morphic image of $\mathcal{F}$. By Prop.~\ref{prop.boxdot-frameproperty}, $\mathcal{F}\vDash\phi$ iff $\mathcal{F}'\vDash\phi$ for all $\phi\in\mathcal{L}(\boxdot)\cup\mathcal{L}(\boxplus)$.

If seriality were defined by a set of $\mathcal{L}(\boxdot)$-formulas or a set of $\mathcal{L}(\boxplus)$-formulas, say $\Gamma$, then as $\mathcal{F}'$ is serial, $\mathcal{F}'\vDash\Gamma$, and thus $\mathcal{F}\vDash\Gamma$, which would imply that $\mathcal{F}$ should be serial: a contradiction. Thus seriality is not definable in $\mathcal{L}(\boxdot)$. The proofs for the undefinability of other frame properties are analogous.
\end{proof}

The frame undefinability results can be understood in the following way: since in the figures of Prop.~\ref{prop.undefinable}, we have $R_1=R_2$, and we already commented that if $R_1=R_2$, then each of $\boxdot$ and $\boxplus$ becomes a non-contingency operator; moreover, none of the five basic frame properties are definable in a logic with any non-contingency operator as a sole primitive modality~\cite{DBLP:journals/ndjfl/Zolin99,Fanetal:2015}, thus Prop.~\ref{prop.undefinable} obtains. 

\section{Axiomatizations for $\mathcal{L}(\boxplus)$}\label{sec.boxplus-logics}

This section first presents the minimal logic for $\mathcal{L}(\boxplus)$, and shows its soundness and completeness, and then demonstrates that the same logic is also sound and strongly complete with respect to the class of serial bimodal frames.

\subsection{The minimal logic and soundness}


\begin{definition} The minimal logic for $\mathcal{L}(\boxplus)$, denoted ${\bf K^\boxplus}$, consists of the following axioms and inference rules:
\[
\begin{array}{lll}
\text{PC}&&\text{all instances of propositional tautologies}\\
\text{CON}\circledS&&\circledS\phi\land\circledS\psi\to\circledS(\phi\land\psi)\land\circledS(\phi\vee\psi)\\
\text{DIS}\circledS&&\circledS\phi\to\circledS(\phi\vee\psi)\vee\circledS(\phi\land\chi)\\
\text{MP}&&\dfrac{\phi,\phi\to\psi}{\psi}\\
\text{RN}\circledS&&\dfrac{\phi}{\circledS\phi\land\circledS\neg\phi}\\
\text{RE}\circledS&&\dfrac{\phi\lra\psi}{\circledS\phi\lra\circledS\psi}\\
\end{array}
\]
\end{definition}

Notions of deductions and theorems are defined as normal.


Recall that in the minimal noncontingency logic, the axiom $\Delta\phi\to\Delta(\phi\vee\psi)\vee\Delta(\phi\land\chi)$ (denoted DIS$\Delta$ hereafter) can be replaced with the rule $\dfrac{\phi\to\psi~~~\psi\to\chi}{\Delta\psi\to\Delta\phi\vee\Delta\chi}$~\cite[p.~110]{Humberstone:2002}. This also applies to its $\boxplus$-correspondent; more precisely, the axiom DIS$\circledS$ is replaceable with the rule $\dfrac{\phi\to\psi~~~\psi\to\chi}{\circledS\psi\to\circledS\phi\vee\circledS\chi}$, given the rule RE$\circledS$.


Also, DIS$\Delta$ can be replaced with $\Delta\phi\to\Delta(\phi\vee\psi)\vee\Delta(\neg\phi\vee\chi)$ (called `Kuhn's axiom'), and even with the formula (which is equivalent to Kuhn's axiom) with less district schematic letters $\Delta\phi\to\Delta(\phi\vee\psi)\vee\Delta(\neg\phi\vee\psi)$, see~\cite[pp.~110-111]{Humberstone:2002}. In comparison, the axiom $\text{DIS}\circledS$ cannot be replaced with $\circledS\phi\to\circledS(\phi\vee\psi)\vee\circledS(\neg\phi\vee\chi)$, neither with $\circledS\phi\to\circledS(\phi\vee\psi)\vee\circledS(\neg\phi\vee\psi)$, as illustrated below.
\weg{\[
\xymatrix{t_{\overline{p}q}&&s{\ar@(ul,ur)|{1,2}}_{\overline{p}\overline{q}}\ar[ll]|{2}\ar[rr]|{1}&&u_{p\overline{q}}\\}
\]}
\[
\xymatrix{t:{\overline{p}q}&&s{\ar@(ul,ur)|{1}}:{\overline{p}\overline{q}}\ar[ll]|{2}\ar[rr]|{1}&&u:{p\overline{q}}\\}
\]
On one hand, because $R_2(s)\vDash\neg p$, we have $s\vDash\boxplus p$. On the other hand, since $R_1(s)\nvDash p\vee q$ (as $sR_1s$ and $s\nvDash p\vee q$) and $R_2(s)\nvDash\neg (p\vee q)$ (as $sR_2t$ and $t\vDash p\vee q$), it follows that $s\nvDash\boxplus (p\vee q)$; moreover, since $R_1(s)\nvDash \neg p\vee q$ (as $sR_1u$ and $u\vDash p\land\neg q$) and $R_2(s)\nvDash\neg (\neg p\vee q)$ (as $sR_2t$ and $t\vDash \neg p\vee q$), it follows that $s\nvDash\boxplus(\neg p\vee q)$. This indicates that $\circledS p\to\circledS(p\vee q)\vee\circledS(\neg p\vee q)$ is invalid.
\begin{proposition}
${\bf K^\boxplus}$ is sound with respect to the class of all bimodal frames.
\end{proposition}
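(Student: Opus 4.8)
The plan is to verify soundness by checking that each axiom is valid on every bimodal frame and that each inference rule preserves validity. Since \textbf{PC} and \textbf{MP} are standard, the work concentrates on the four $\boxplus$-specific principles: \textbf{CON}$\boxplus$, \textbf{DIS}$\boxplus$, \textbf{RN}$\boxplus$, and \textbf{RE}$\boxplus$. Throughout I would fix an arbitrary bimodal model $\M=\lr{S,R_1,R_2,V}$ and a world $s\in S$, and reason directly from the clause $\M,s\vDash\boxplus\phi\iff R_1(s)\vDash\phi\text{ or }R_2(s)\vDash\neg\phi$, using the abbreviation $\Box_i\phi$ for $R_i(s)\vDash\phi$ so that $\boxplus\phi$ is $\Box_1\phi\vee\Box_2\neg\phi$ and the familiar distribution laws for the normal boxes $\Box_1,\Box_2$ are available.

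For \textbf{RE}$\boxplus$ and \textbf{RN}$\boxplus$: if $\phi\lra\psi$ is valid then $R_i(s)\vDash\phi$ iff $R_i(s)\vDash\psi$ and $R_i(s)\vDash\neg\phi$ iff $R_i(s)\vDash\neg\psi$, so $\boxplus\phi\lra\boxplus\psi$ is valid; and if $\phi$ is valid then $R_1(s)\vDash\phi$ holds trivially, giving $\boxplus\phi$, while $R_2(s)\vDash\neg\neg\phi$ also holds trivially (as $\neg\neg\phi$ is valid), giving $\boxplus\neg\phi$, hence $\boxplus\phi\land\boxplus\neg\phi$. For \textbf{CON}$\boxplus$: assume $\M,s\vDash\boxplus\phi\land\boxplus\psi$, i.e. $(\Box_1\phi\vee\Box_2\neg\phi)$ and $(\Box_1\psi\vee\Box_2\neg\psi)$. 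Distributing the conjunction over the disjunctions gives four cases; in the case $\Box_1\phi\land\Box_1\psi$ we get $\Box_1(\phi\land\psi)$ and $\Box_1(\phi\vee\psi)$; in the case $\Box_2\neg\phi\land\Box_2\neg\psi$ we get $\Box_2\neg(\phi\vee\psi)$ and, since $\neg\phi$ entails $\neg(\phi\land\psi)$, also $\Box_2\neg(\phi\land\psi)$; in the mixed case $\Box_1\phi\land\Box_2\neg\psi$ we get $\Box_1(\phi\vee\psi)$ (from $\Box_1\phi$) and $\Box_2\neg(\phi\land\psi)$ (from $\Box_2\neg\psi$, since $\neg\psi$ entails $\neg(\phi\land\psi)$); the remaining mixed case is symmetric. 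In every case both conjuncts $\boxplus(\phi\land\psi)$ and $\boxplus(\phi\vee\psi)$ follow.

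For \textbf{DIS}$\boxplus$, namely $\boxplus\phi\to\boxplus(\phi\vee\psi)\vee\boxplus(\phi\land\chi)$: assume $\M,s\vDash\boxplus\phi$, so $\Box_1\phi$ or $\Box_2\neg\phi$. If $\Box_1\phi$, then since $\phi$ entails $\phi\vee\psi$ we get $\Box_1(\phi\vee\psi)$, hence $\boxplus(\phi\vee\psi)$. If $\Box_2\neg\phi$, then since $\neg\phi$ entails $\neg(\phi\land\chi)$ we get $\Box_2\neg(\phi\land\chi)$, hence $\boxplus(\phi\land\chi)$. Either way the disjunction in the consequent holds. Assembling these, the soundness claim follows. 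I do not anticipate a genuine obstacle here — everything reduces to elementary monotonicity of the normal boxes $\Box_1,\Box_2$ under propositional entailment together with the De Morgan/entailment facts $\phi\models\phi\vee\psi$, $\neg\phi\models\neg(\phi\land\psi)$, and $\neg\phi\land\neg\psi\models\neg(\phi\vee\psi)$; the only point demanding a little care is the bookkeeping of the four cases in \textbf{CON}$\boxplus$, making sure the two mixed cases really do yield \emph{both} required conjuncts rather than just one.
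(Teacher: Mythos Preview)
Your proof is correct and follows essentially the same approach as the paper: a direct semantic verification of each axiom and rule using the clause $\boxplus\phi\iff\Box_1\phi\vee\Box_2\neg\phi$. The paper only spells out $\text{CON}\boxplus$ and $\text{DIS}\boxplus$ (organizing $\text{CON}\boxplus$ as two separate arguments for the two conjuncts rather than your four-case distribution), whereas you additionally treat $\text{RN}\boxplus$ and $\text{RE}\boxplus$ explicitly, but the substance is the same.
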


\begin{proof}
We take the validity of $\text{CON}\boxplus$ and $\text{DIS}\boxplus$ as examples. Let $\M=\lr{S,R_1,R_2,V}$ be an arbitrary bimodal model and $s\in S$.

Suppose that $\M,s\vDash\boxplus\phi\land\boxplus\psi$. Then $R_1(s)\vDash\phi$ or $R_2(s)\vDash\neg\phi$, and $R_1(s)\vDash\psi$ or $R_2(s)\vDash\neg\psi$.

If $R_2(s)\vDash\neg\phi$ or $R_2(s)\vDash\neg\psi$, then $R_2(s)\vDash\neg(\phi\land\psi)$; otherwise, that is, if $R_1(s)\vDash\phi$ and $R_1(s)\vDash\psi$, then $R_1(s)\vDash\phi\land\psi$. Thus either $R_1(s)\vDash\phi\land\psi$ or $R_2(s)\vDash\neg(\phi\land\psi)$, and therefore $\M,s\vDash\boxplus(\phi\land\psi)$.

If $R_1(s)\vDash\phi$ or $R_1(s)\vDash\psi$, then $R_1(s)\vDash\phi\vee\psi$; otherwise, that is, if $R_2(s)\vDash\neg\phi$ and $R_2(s)\vDash\neg\psi$, then $R_2(s)\vDash\neg(\phi\vee\psi)$. Thus either $R_1(s)\vDash\phi\vee\psi$ or $R_2(s)\vDash\neg(\phi\vee\psi)$, and therefore $\M,s\vDash\boxplus(\phi\vee\psi)$. Hitherto we have completed the validity of $\text{CON}\boxplus$.

Now suppose that $\M,s\vDash\boxplus\phi$, then $R_1(s)\vDash\phi$ or $R_2(s)\vDash\neg\phi$. If it is the case that $R_1(s)\vDash\phi$, then $R_1(s)\vDash\phi\vee\psi$, which implies that $\M,s\vDash\boxplus(\phi\vee\psi)$; if it is the case that $R_2(s)\vDash\neg\phi$, then $R_2(s)\vDash\neg(\phi\land\chi)$, which entails that $\M,s\vDash\boxplus(\phi\land\chi)$. Therefore, $\M,s\vDash\boxplus(\phi\vee\psi)\vee\boxplus(\phi\land\chi)$. Hitherto we have completed the validity of $\text{DIS}\boxplus$.
\end{proof}

\subsection{Completeness}

This part deals with the completeness of ${\bf K^\boxplus}$. We adopt the standard canonical model construction. However, a tricky thing is how to define two suitable canonical relations to handle the operator $\circledS$.

\begin{definition}\label{def.cm-boxplus} The {\em canonical model} for ${\bf K^\boxplus}$ is a tuple $\M^c=\lr{S^c,R^c_1,R^c_2,V^c}$, where
\begin{itemize}
\item $S^c=\{s\mid s\text{ is a maximal }{\bf K^\boxplus}\text{-consistent set}\}$;
\item $sR^c_1t$ iff $\lambda_1(s)\subseteq t$, where $\lambda_1(s)=\{\phi\mid \circledS(\phi\vee\psi)\in s\text{ for all }\psi\}$;
\item $sR^c_2u$ iff $\lambda_2(s)\subseteq u$, where $\lambda_2(s)=\{\phi\mid \circledS(\neg\phi\land \chi)\in s\text{ for all }\chi\}$;
\item $V^c(p)=\{s\in S^c\mid p\in s\}$.
\end{itemize}
\end{definition}

As mentioned, the semantics of $\Delta$ is a special case of the semantics of $\boxplus$ when $R_1=R_2$. 
In that case, we should have $R^c_1=R^c_2$. Indeed this is true, since in that case, $\circledS\phi\lra\circledS\neg\phi$ is valid, and then the definition of $R_2^c$ is equivalent to that ``$\text{for all }\phi, \text{ if }\circledS(\phi\vee\psi)\in s\text{ for all }\psi,\text{ then }\phi\in t$'', that is, the definition of $R^c_1$. And in this way, we obtain the canonical relation defined in~\cite{DBLP:journals/ndjfl/Kuhn95} as a special case.


Let us look at the properties of the two functions $\lambda_1$ and $\lambda_2$.
\begin{proposition}\label{prop.import} Let $s\in S^c$. Then
\begin{enumerate}
\item[(a)] $\lambda_1(s)\cap\lambda_2(s)$ is nonempty. Consequently, $\lambda_1(s)$ and $\lambda_2(s)$ are both nonempty.
\item[(b)] $\lambda_1(s)$ and $\lambda_2(s)$ are both closed under conjunction. That is, if $\phi_1,\phi_2\in\lambda_1(s)$, then $\phi_1\land\phi_2\in\lambda_1(s)$, and similarly for $\lambda_2(s)$. Consequently, $\lambda_1(s)$ and $\lambda_2(s)$ are both closed under finite conjunctions.
\item[(c)] If $\phi\in\lambda_1(s)$ and $\vdash\phi\to\delta$, then $\delta\in\lambda_1(s)$, and similarly for $\lambda_2(s)$.
\item[(d)] $\circledS\phi\in s$ iff either $\phi\in \lambda_1(s)$ or $\neg\phi\in\lambda_2(s)$.
\end{enumerate}
\end{proposition}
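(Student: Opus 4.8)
The plan is to prove the four parts in the order (b), (c), (a), (d), since the closure properties feed into the nonemptiness argument, and (d) is the real payoff that draws on all the rest. Throughout I will work inside the maximal consistent set $s$ and use that $s$ is closed under the rules and axioms of ${\bf K^\boxplus}$.

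For (b), suppose $\phi_1,\phi_2\in\lambda_1(s)$, so $\circledS(\phi_1\vee\psi)\in s$ and $\circledS(\phi_2\vee\psi)\in s$ for every $\psi$. Fix an arbitrary $\psi$. Applying the definition with $\psi$ replaced by suitable disjunctions and using $\text{RE}\circledS$ to massage the arguments, I get $\circledS(\phi_1\vee(\phi_2\vee\psi))\in s$ and $\circledS(\phi_2\vee(\phi_1\vee\psi))\in s$; then $\text{CON}\circledS$ applied to these two (whose conjunction is provably equivalent, via $\text{RE}\circledS$, to $\circledS$ of $(\phi_1\land\phi_2)\vee\psi$ together with a disjunctive conjunct I can discard) yields $\circledS((\phi_1\land\phi_2)\vee\psi)\in s$. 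Since $\psi$ was arbitrary, $\phi_1\land\phi_2\in\lambda_1(s)$. The argument for $\lambda_2(s)$ is the De Morgan dual: $\phi\in\lambda_2(s)$ means $\circledS(\neg\phi\land\chi)\in s$ for all $\chi$, and one runs the same computation through $\text{CON}\circledS$'s second conjunct. Part (c) is an easy consequence of the rules: if $\phi\in\lambda_1(s)$ and $\vdash\phi\to\delta$, then for any $\psi$ we have $\vdash(\phi\vee\psi)\lra(\delta\vee(\phi\vee\psi))$... more carefully, $\vdash\phi\vee\psi\to\delta\vee\psi$, and I want to go the other way, so I instead note $\vdash(\delta\vee\psi)\lra((\phi\vee\psi)\vee(\delta\vee\psi))$ is not quite it either; the clean route is: $\vdash\phi\to\delta$ gives $\vdash\delta\vee\psi\lra\phi\vee(\delta\vee\psi)$, wait — the robust fact is simply that $\vdash\phi\to\delta$ implies $\vdash\phi\vee\psi'\lra\delta\vee\psi'$ is false in general, so the correct observation is that $\circledS$ applied to $\phi\vee(\delta\vee\psi)$ is in $s$ by hypothesis (taking the witness $\psi':=\delta\vee\psi$), and $\phi\vee(\delta\vee\psi)$ is provably equivalent to $\delta\vee\psi$ because $\vdash\phi\to\delta$; then $\text{RE}\circledS$ gives $\circledS(\delta\vee\psi)\in s$ for arbitrary $\psi$, i.e. $\delta\in\lambda_1(s)$. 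The $\lambda_2$ case is dual, using $\neg\delta\to\neg\phi$ and the argument form $\neg\delta\land\chi$.

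For (a): from (c), $\lambda_1(s)$ is upward closed under provable implication, and from $\text{RN}\circledS$ we have $\circledS\top\land\circledS\neg\top\in s$, hence $\circledS(\top)\in s$; since $\vdash\top\lra(\top\vee\psi)$, $\text{RE}\circledS$ gives $\circledS(\top\vee\psi)\in s$ for all $\psi$, so $\top\in\lambda_1(s)$, and similarly (from $\circledS\neg\top=\circledS\bot$ and $\vdash\bot\lra\neg\top$, and $\vdash\neg\top\lra(\neg\top\land\chi)$) one gets $\top\in\lambda_2(s)$ — but I want a \emph{common} element, and $\top$ lies in both, so $\lambda_1(s)\cap\lambda_2(s)$ is nonempty because it contains $\top$ (equivalently, it contains any theorem, by (c)). Actually the cleanest statement: $\vdash\phi$ implies $\phi\in\lambda_1(s)\cap\lambda_2(s)$, using $\text{RN}\circledS$ together with $\text{RE}\circledS$ and (c); taking $\phi=\top$ settles (a), and the ``consequently'' clause is immediate.

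Part (d) is the crux and the main obstacle. The direction ``$\phi\in\lambda_1(s)$ or $\neg\phi\in\lambda_2(s)$ implies $\circledS\phi\in s$'' is the routine half: if $\phi\in\lambda_1(s)$, instantiate the defining condition with $\psi:=\phi$ (or use (c) with $\vdash\phi\vee\phi\to\phi$... rather: $\circledS(\phi\vee\phi)\in s$ and $\text{RE}\circledS$ with $\vdash\phi\vee\phi\lra\phi$ give $\circledS\phi\in s$); if $\neg\phi\in\lambda_2(s)$, then $\circledS(\neg\neg\phi\land\chi)\in s$ for all $\chi$, take $\chi:=\neg\neg\phi$ and use $\text{RE}\circledS$ with $\vdash(\neg\neg\phi\land\neg\neg\phi)\lra\phi$ to get $\circledS\phi\in s$. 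The hard direction is ``$\circledS\phi\in s$ implies $\phi\in\lambda_1(s)$ or $\neg\phi\in\lambda_2(s)$'' — this is exactly where $\text{DIS}\circledS$ must do its work, and it is the step I expect to be delicate. Suppose $\circledS\phi\in s$ but $\phi\notin\lambda_1(s)$; then there is some $\psi_0$ with $\circledS(\phi\vee\psi_0)\notin s$. I must produce, for an \emph{arbitrary} $\chi$, the membership $\circledS(\neg\neg\phi\land\chi)\in s$, equivalently (by $\text{RE}\circledS$) $\circledS(\phi\land\chi)\in s$. From $\circledS\phi\in s$, the axiom $\text{DIS}\circledS$ gives $\circledS(\phi\vee\psi_0)\vee\circledS(\phi\land\chi)\in s$; since $\circledS(\phi\vee\psi_0)\notin s$ and $s$ is maximal consistent, $\circledS(\phi\land\chi)\in s$, and as $\chi$ was arbitrary this yields $\neg\phi\in\lambda_2(s)$ after applying $\text{RE}\circledS$ to rewrite $\phi\land\chi$ as $\neg\neg\phi\land\chi$. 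So the whole argument hinges on a clean application of $\text{DIS}\circledS$ with the failed witness $\psi_0$ in the first disjunct and the arbitrary $\chi$ in the second; the bookkeeping with $\text{RE}\circledS$ to match the $\neg\neg\phi$ in the definition of $\lambda_2$ is the only fiddly part, and it is purely propositional.
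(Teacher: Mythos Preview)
Your argument for part (b) on $\lambda_1(s)$ does not work as written. You instantiate the universal $\psi$ differently for the two hypotheses, obtaining $\circledS(\phi_1\vee(\phi_2\vee\psi))\in s$ and $\circledS(\phi_2\vee(\phi_1\vee\psi))\in s$; but these two inner formulas are propositionally equivalent (both are $\phi_1\vee\phi_2\vee\psi$), so applying $\text{CON}\circledS$ to them only returns $\circledS(\phi_1\vee\phi_2\vee\psi)$ again, never $\circledS((\phi_1\land\phi_2)\vee\psi)$. The fix is immediate and is exactly what the paper does: keep the \emph{same} $\psi$ for both hypotheses to get $\circledS(\phi_1\vee\psi)\in s$ and $\circledS(\phi_2\vee\psi)\in s$, apply $\text{CON}\circledS$ to obtain $\circledS((\phi_1\vee\psi)\land(\phi_2\vee\psi))\in s$, and then use distributivity $(\phi_1\vee\psi)\land(\phi_2\vee\psi)\lra(\phi_1\land\phi_2)\vee\psi$ together with $\text{RE}\circledS$. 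The $\lambda_2(s)$ case needs the analogous repair, this time using the $\circledS(\phi\vee\psi)$ conjunct of $\text{CON}\circledS$.

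The remaining parts are correct and coincide with the paper's approach. In (c), after the false starts, you land on exactly the paper's move: instantiate with $\psi':=\delta\vee\psi$ and use $\vdash\phi\to\delta$ to collapse $\phi\vee(\delta\vee\psi)$ to $\delta\vee\psi$ via $\text{RE}\circledS$. Part (a) matches the paper (both show $\top\in\lambda_1(s)\cap\lambda_2(s)$ from $\text{RN}\circledS$ and $\text{RE}\circledS$). Your direct proof of the hard direction of (d) is the contrapositive of the paper's contrapositive argument and uses $\text{DIS}\circledS$ in the same way; your choices $\psi:=\phi$ and $\chi:=\neg\neg\phi$ in the easy direction work just as well as the paper's $\psi:=\bot$ and $\chi:=\top$.
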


\begin{proof}\
\begin{enumerate}
\item[(a)] Since $\vdash \top$, then applying the rule $\text{RN}\circledS$, we have $\vdash \circledS\top\land \circledS\neg\top$. By $\text{RE}\boxplus$, $\circledS(\top\vee\psi)\in s$ for all $\psi$ and $\circledS(\neg\top\land\chi)\in s$ for all $\chi$. Therefore, $\top\in \lambda_1(s)\cap\lambda_2(s)$.
\item[(b)] Suppose that $\phi_1,\phi_2\in\lambda_1(s)$, then $\circledS(\phi_1\vee\psi)\in s$ and $\circledS(\phi_2\vee\psi)\in s$ for all $\psi$. Then $\circledS(\phi_1\vee\psi)\land \circledS(\phi_2\vee\psi)\in s$. Using the axiom $\text{CON}\circledS$, we obtain $\circledS((\phi_1\vee\psi)\land(\phi_2\vee\psi))\in s$. Then applying the rule $\text{RE}\circledS$, we infer that $\circledS((\phi_1\land\phi_2)\vee\psi)\in s$. Since $\psi$ is arbitrary, we now conclude that $\phi_1\land\phi_2\in\lambda_1(s)$.

    Assume that $\phi_1,\phi_2\in\lambda_2(s)$, then $\circledS(\neg\phi_1\land\chi)\in s$ and $\circledS(\neg\phi_2\land\chi)\in s$ for all $\chi$. Then $\circledS(\neg\phi_1\land\chi)\land \circledS(\neg\phi_2\land\chi)\in s$. Using the axiom $\text{CON}\circledS$, we infer $\circledS((\neg\phi_1\land\chi)\vee (\neg\phi_2\land\chi))\in s$. Now applying the rule $\text{RE}\circledS$, we obtain $\circledS((\neg(\phi_1\land\phi_2)\land\chi))\in s$. Since $\chi$ is arbitrary, we now conclude that $\phi_1\land\phi_2\in \lambda_2(s)$.
\item[(c)] Suppose $\phi\in\lambda_1(s)$ and $\vdash \phi\to\delta$, to show $\delta\in\lambda_1(s)$. By supposition, it follows that $\vdash\phi\vee\delta\lra\delta$ and $\circledS(\phi\vee\psi)\in s$ for all $\psi$. Then $\vdash\phi\vee(\delta\vee\psi)\lra\delta\vee\psi$. Applying the rule $\text{RE}\circledS$, we derive $\vdash\circledS(\phi\vee(\delta\vee\psi))\lra\circledS(\delta\vee\psi)$. Since $\circledS(\phi\vee(\delta\vee\psi))\in s$, we derive that $\circledS(\delta\vee\psi)\in s$. Since $\psi$ is arbitrary, $\delta\in \lambda_1(s)$.

    Now assume that $\phi\in\lambda_2(s)$ and $\vdash \phi\to\delta$, to show $\delta\in\lambda_2(s)$. Since $\phi\in\lambda_2(s)$, it follows that $\circledS(\neg\phi\land\chi)\in s$ for all $\chi$. Since $\vdash \phi\to\delta$, it follows that $\vdash\neg\phi\land\neg\delta\lra\neg\delta$, and thus $\vdash\neg\phi\land(\neg\delta\land\chi)\lra\neg\delta\land\chi$. Applying the rule $\text{RE}\circledS$, we obtain $\vdash\circledS(\neg\phi\land(\neg\delta\land\chi))\lra\circledS(\neg\delta\land\chi)$. Since $\circledS(\neg\phi\land(\neg\delta\land\chi))\in s$, we get $\circledS(\neg\delta\land\chi)\in s$. Since $\chi$ is arbitrary, $\delta\in \lambda_2(s)$.
\item[(d)] Suppose by contraposition that $\phi\notin \lambda_1(s)$ and $\neg\phi\notin\lambda_2(s)$. Then $\circledS(\phi\vee\psi)\notin s$ for some $\psi$, and $\circledS(\neg\neg\phi\land\chi)\notin s$ for some $\chi$, namely $\circledS(\phi\land\chi)\notin s$. Using the axiom $\text{DIS}\circledS$, we obtain immediately $\circledS\phi\notin s$.

    Conversely, assume that either $\phi\in\lambda_1(s)$ or $\neg\phi\in\lambda_2(s)$. Then {\em either} $\boxplus(\phi\vee\psi)\in s$ for all $\psi$ {\em or} $\boxplus(\phi\land\chi)\in s$ for all $\chi$. Then either case implies that $\boxplus\phi\in s$: in the first case, letting $\psi=\bot$, by RE$\boxplus$ we obtain $\boxplus\phi\in s$; in the second case, let $\chi=\top$, by RE$\boxplus$ again, we infer that $\boxplus\phi\in s$. Therefore, $\boxplus\phi\in s$.
\end{enumerate}
\end{proof}

With the above results in preparation, we can obtain the following truth lemma.
\begin{lemma}
For all $s\in S^c$, for all $\phi\in \mathcal{L}(\boxplus)$, we have
$$\phi\in s\iff\M^c,s\vDash\phi.$$
\end{lemma}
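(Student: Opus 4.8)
The plan is to prove the Truth Lemma by induction on the structure of $\phi\in\mathcal{L}(\boxplus)$. The atomic case is immediate from the definition of $V^c$, and the Boolean cases are routine, using that each $s\in S^c$ is a maximal consistent set. The only real work is the modal case $\phi=\boxplus\psi$, and by Proposition~\ref{prop.import}(d) it suffices to show
\[
\boxplus\psi\in s\iff\M^c,s\vDash\boxplus\psi,
\quad\text{i.e.}\quad
\big(\psi\in\lambda_1(s)\text{ or }\neg\psi\in\lambda_2(s)\big)\iff\big(R^c_1(s)\vDash\psi\text{ or }R^c_2(s)\vDash\neg\psi\big),
\]
where the right-hand side is, by the induction hypothesis, equivalent to: $\psi\in t$ for all $t$ with $\lambda_1(s)\subseteq t$, or $\neg\psi\in u$ for all $u$ with $\lambda_2(s)\subseteq u$.

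For the left-to-right direction, suppose $\psi\in\lambda_1(s)$; then for every $t\in S^c$ with $\lambda_1(s)\subseteq t$ we have $\psi\in t$, so $R^c_1(s)\vDash\psi$ by the induction hypothesis, hence $\M^c,s\vDash\boxplus\psi$. Symmetrically, if $\neg\psi\in\lambda_2(s)$ then $\neg\psi\in u$ for every $u\supseteq\lambda_2(s)$, giving $R^c_2(s)\vDash\neg\psi$. The right-to-left direction is the crux and is where I expect the main obstacle: I would prove its contrapositive. Assume $\psi\notin\lambda_1(s)$ and $\neg\psi\notin\lambda_2(s)$; I must produce a $t\in S^c$ with $\lambda_1(s)\subseteq t$ and $\psi\notin t$, and a $u\in S^c$ with $\lambda_2(s)\subseteq u$ and $\neg\psi\notin u$. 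For the first, it is enough to show $\lambda_1(s)\cup\{\neg\psi\}$ is consistent and then extend by Lindenbaum; for the second, that $\lambda_2(s)\cup\{\psi\}$ is consistent.

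The consistency of $\lambda_1(s)\cup\{\neg\psi\}$ is established as follows: if it were inconsistent, there would be finitely many $\phi_1,\dots,\phi_n\in\lambda_1(s)$ with $\vdash(\phi_1\land\cdots\land\phi_n)\to\psi$; by Proposition~\ref{prop.import}(b) the conjunction $\phi:=\phi_1\land\cdots\land\phi_n$ lies in $\lambda_1(s)$, and then by (c), since $\vdash\phi\to\psi$, we get $\psi\in\lambda_1(s)$, contradicting $\psi\notin\lambda_1(s)$. The argument for $\lambda_2(s)\cup\{\psi\}$ is entirely analogous, using closure of $\lambda_2(s)$ under conjunction and under provable consequence (again Proposition~\ref{prop.import}(b),(c)): an inconsistency would yield $\phi\in\lambda_2(s)$ with $\vdash\phi\to\neg\psi$, hence $\neg\psi\in\lambda_2(s)$, contradicting the assumption. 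This handles the modal case and completes the induction. The subtlety worth flagging is purely in getting the definitions of $\lambda_1,\lambda_2$ to cooperate — the asymmetry in their definitions ($\phi\vee\psi$ versus $\neg\phi\land\chi$) is exactly what makes $R^c_1$ and $R^c_2$ behave like the two accessibility relations witnessing $\boxplus$, and Proposition~\ref{prop.import}(d) is the bridge that reduces membership of $\boxplus\psi$ to the disjunctive condition on $\lambda_1$ and $\lambda_2$.
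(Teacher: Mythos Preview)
Your proof is correct and follows essentially the same approach as the paper's: both handle the modal case by reducing $\boxplus\psi\in s$ to the disjunction $\psi\in\lambda_1(s)$ or $\neg\psi\in\lambda_2(s)$ via Proposition~\ref{prop.import}(d), and both establish the key contrapositive direction by proving the consistency of $\lambda_1(s)\cup\{\neg\psi\}$ and $\lambda_2(s)\cup\{\psi\}$ using parts (b) and (c), then invoking Lindenbaum. The only cosmetic difference is that the paper organizes the left-to-right direction as a reductio rather than directly, and explicitly notes (via part (a)) that $\lambda_1(s)$ and $\lambda_2(s)$ are nonempty so that the finite conjunctions in the consistency arguments are well-formed; you may want to mention this for completeness.
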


\begin{proof}
By induction on $\phi$. The only nontrivial case is $\boxplus\phi$.

Assume for reductio that $\circledS\phi\in s$ but $\M^c,s\nvDash\circledS\phi$. By induction hypothesis, there is a $t$ such that $sR_1^ct$ and $\phi\notin t$, and there is a $u$ such that $sR_2^cu$ and $\neg\phi\notin u$. Then by definitions of $R^c_1$ and $R^c_2$, we can obtain that $\phi\notin \lambda_1(s)$ and $\neg\phi\notin \lambda_2(s)$. This contradicts the supposition that $\circledS\phi\in s$ and Prop.~\ref{prop.import}(d).

Conversely, suppose $\circledS\phi\notin s$, we need to find two states $t$ and $u$ in $S^c$ such that $sR^c_1t$ and $\phi\notin t$, and $sR^c_2u$ and $\phi\in u$. For this, we first show that
\begin{enumerate}
\item[(1)] $\lambda_1(s)\cup\{\neg\phi\}$ is consistent, and
\item[(2)] $\lambda_2(s)\cup\{\phi\}$ is consistent.
\end{enumerate}

If (1) does not hold, then there exist $\chi_1,\cdots,\chi_n\in\lambda_1(s)$\footnote{Prop.~\ref{prop.import}(a) provides the nonempty of $\lambda_1(s)$.} such that $\vdash\chi_1\land\cdots\land\chi_n\to\phi$. Since $\chi_1,\cdots,\chi_n\in\lambda_1(s)$, from Prop.~\ref{prop.import}(b) it follows that $\chi_1\land\cdots\land\chi_n\in\lambda_1(s)$. Then due to Prop.~\ref{prop.import}(c), we have $\phi\in\lambda_1(s)$, by Prop.~\ref{prop.import}(d) we conclude that $\circledS\phi\in s$, contrary to the supposition.

If (2) does not hold, then there are $\psi_1,\cdots,\psi_m\in\lambda_2(s)$\footnote{Again, Prop.~\ref{prop.import}(a) provides the nonempty of $\lambda_2(s)$.} such that $\vdash\psi_1\land\cdots\land\psi_m\to\neg\phi$. Since $\psi_1,\cdots,\psi_m\in\lambda_2(s)$, it follows that $\psi_1\land\cdots\land\psi_m\in\lambda_2(s)$ from Prop.~\ref{prop.import}(b). Then thanks to Prop.~\ref{prop.import}(c), we infer that $\neg\phi\in\lambda_2(s)$, by Prop.~\ref{prop.import}(d) again, we derive that $\circledS\phi\in s$, which contradicts the supposition again.

Then by Lindenbaum's Lemma, we are done.
\end{proof}

Now it is a standard exercise to show that ${\bf K^\boxplus}$ is the minimal logic of $\mathcal{L}(\boxplus)$.
\begin{theorem}
${\bf K^\boxplus}$ is sound and strongly complete with respect to the class of all bimodal frames.
\end{theorem}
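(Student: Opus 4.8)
The plan is to assemble the result from the pieces already in place: soundness has been proved just above, so the remaining task is strong completeness, which follows from the standard Lindenbaum/truth-lemma argument. First I would recall that strong completeness with respect to the class of all bimodal frames means: for every set $\Gamma\cup\{\phi\}\subseteq\mathcal{L}(\boxplus)$, if $\Gamma\vDash\phi$ then $\Gamma\vdash_{\bf K^\boxplus}\phi$. By contraposition, it suffices to show that every ${\bf K^\boxplus}$-consistent set $\Gamma$ is satisfiable in some bimodal model. So I would take such a $\Gamma$, extend it to a maximal ${\bf K^\boxplus}$-consistent set $s$ via Lindenbaum's Lemma, and observe that $s\in S^c$.

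Next I would invoke the Truth Lemma just proved: for all $s\in S^c$ and all $\phi\in\mathcal{L}(\boxplus)$, $\phi\in s \iff \M^c,s\vDash\phi$. Applying this to the maximal consistent set containing $\Gamma$ gives $\M^c,s\vDash\psi$ for every $\psi\in\Gamma$, so $\Gamma$ is satisfied at the point $s$ in the canonical model $\M^c=\lr{S^c,R^c_1,R^c_2,V^c}$, which is a bimodal model by construction. Since the underlying frame of $\M^c$ is a bimodal frame (no special properties are required or claimed here), this establishes satisfiability of every consistent set, hence strong completeness. Combined with the soundness proposition immediately preceding, we conclude that ${\bf K^\boxplus}$ is sound and strongly complete with respect to the class of all bimodal frames.

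There is essentially no genuine obstacle left at this point — the conceptual work was concentrated in two earlier places: (i) finding the right definitions of the canonical relations $R^c_1$ and $R^c_2$ via the functions $\lambda_1$ and $\lambda_2$ (which is where the asymmetry between the $\Box_1$-reading and the $\Box_2\neg$-reading of $\boxplus$ has to be respected), and (ii) proving the $\boxplus$-case of the Truth Lemma, which relies crucially on Proposition~\ref{prop.import}, especially clauses (b)--(d) together with the axioms $\text{CON}\boxplus$ and $\text{DIS}\boxplus$. Given those, the theorem is a routine packaging step: Lindenbaum's Lemma plus the Truth Lemma. The only point worth a sentence of care is that the canonical model is indeed a legitimate bimodal model (its relations are genuine binary relations on a nonempty set — nonemptiness following from the consistency of ${\bf K^\boxplus}$ itself), so that no frame-class side condition needs checking; strong, rather than merely weak, completeness comes for free because the argument starts from an arbitrary consistent set rather than from the negation of a single non-theorem.
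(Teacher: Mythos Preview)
Your proposal is correct and matches the paper's own treatment: the paper simply states that after the Truth Lemma ``it is a standard exercise'' to obtain soundness and strong completeness, and what you have written is exactly that standard exercise spelled out (Lindenbaum extension plus the Truth Lemma applied to the canonical model).
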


\subsection{The serial logic}

In this section, we show that ${\bf K^\boxplus}$ is also the serial logic of $\mathcal{L}(\boxplus)$, that is to say, ${\bf K^\boxplus}$ is sound and strongly complete with respect to the class of serial bimodal frames. For this, if $R^c_1$ and $R^c_2$ in Def.~\ref{def.cm-boxplus} are serial, then we are done. We first have the following key observation.

\begin{proposition}\label{prop.serial-equiv}
Define $\M^c$ as in Def.~\ref{def.cm-boxplus} and $s\in S^c$. Then the following conditions are equivalent:
\begin{enumerate}
\item\label{prop.serial-1} $R^c_1(s)\neq \emptyset$. 
\item\label{prop.serial-2} $\bot\notin \lambda_1(s)$.
\item\label{prop.serial-3} $\boxplus\psi\notin s$ for some $\psi$.
\item\label{prop.serial-4} $\bot\notin \lambda_2(s)$.
\item\label{prop.serial-5} $R^c_2(s)\neq \emptyset$.
\end{enumerate}
\end{proposition}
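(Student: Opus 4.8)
The plan is to establish the chain of equivalences by proving $(\ref{prop.serial-1})\Rightarrow(\ref{prop.serial-2})\Rightarrow(\ref{prop.serial-3})\Rightarrow(\ref{prop.serial-4})\Rightarrow(\ref{prop.serial-5})\Rightarrow(\ref{prop.serial-1})$, or alternatively by proving the two ``outer'' equivalences $(\ref{prop.serial-1})\iff(\ref{prop.serial-2})$ and $(\ref{prop.serial-4})\iff(\ref{prop.serial-5})$ directly from the definitions of $R^c_1$ and $R^c_2$, and then linking the three middle conditions. I would in fact prefer the cycle, since it minimizes the number of separate arguments.

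First, for $(\ref{prop.serial-1})\Rightarrow(\ref{prop.serial-2})$: if $R^c_1(s)\neq\emptyset$, pick $t$ with $sR^c_1t$, i.e.\ $\lambda_1(s)\subseteq t$; since $t$ is consistent and $\bot\notin t$, we get $\bot\notin\lambda_1(s)$. Contrapositively, for $(\ref{prop.serial-2})\Rightarrow(\ref{prop.serial-1})$-style reasoning one would use that $\lambda_1(s)$ is closed under conjunction and under provable consequence (Prop.~\ref{prop.import}(b),(c)) together with Prop.~\ref{prop.import}(a), so if $\bot\notin\lambda_1(s)$ then $\lambda_1(s)$ is consistent and extends by Lindenbaum's Lemma to some $t\in S^c$ with $\lambda_1(s)\subseteq t$, giving $sR^c_1t$ — but in the cyclic proof this direction is subsumed into $(\ref{prop.serial-4})\Rightarrow(\ref{prop.serial-5})$, which is the symmetric statement for $\lambda_2$ and handled the same way using the $\lambda_2$-clauses of Prop.~\ref{prop.import}.

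Next, the genuinely content-bearing steps are $(\ref{prop.serial-2})\Rightarrow(\ref{prop.serial-3})$ and $(\ref{prop.serial-3})\Rightarrow(\ref{prop.serial-4})$. For $(\ref{prop.serial-2})\Rightarrow(\ref{prop.serial-3})$: if $\bot\notin\lambda_1(s)$, then in particular $\bot\vee\bot\notin s$ after applying $\text{RE}\circledS$, i.e.\ $\circledS(\bot\vee\psi)\notin s$ for some $\psi$; taking $\psi=\bot$ and using $\text{RE}\circledS$ to rewrite $\bot\vee\bot$ as $\bot$, we get $\circledS\bot\notin s$, so witness $(\ref{prop.serial-3})$ with $\psi:=\bot$. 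Wait — more carefully: $\bot\notin\lambda_1(s)$ means ``$\circledS(\bot\vee\psi)\in s$ for all $\psi$'' fails, so there is a specific $\psi_0$ with $\circledS(\bot\vee\psi_0)\notin s$; by $\text{RE}\circledS$, $\bot\vee\psi_0$ is provably equivalent to $\psi_0$, hence $\circledS\psi_0\notin s$, establishing $(\ref{prop.serial-3})$. For $(\ref{prop.serial-3})\Rightarrow(\ref{prop.serial-4})$: suppose $\circledS\psi\notin s$; by Prop.~\ref{prop.import}(d) this yields $\psi\notin\lambda_1(s)$ and $\neg\psi\notin\lambda_2(s)$, and I must extract $\bot\notin\lambda_2(s)$. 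Here the key is contraposition on Prop.~\ref{prop.import}(c) together with $\text{RN}\circledS$-consequences: if $\bot\in\lambda_2(s)$, then by Prop.~\ref{prop.import}(c) every formula lies in $\lambda_2(s)$ (since $\vdash\bot\to\delta$), in particular $\neg\psi\in\lambda_2(s)$, so by Prop.~\ref{prop.import}(d) $\circledS\psi\in s$, contradicting $(\ref{prop.serial-3})$; hence $\bot\notin\lambda_2(s)$. The symmetric remark also shows $\bot\notin\lambda_1(s)$, which is why all five conditions collapse together.

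Finally, $(\ref{prop.serial-4})\Rightarrow(\ref{prop.serial-5})$ mirrors the Lindenbaum argument above: $\bot\notin\lambda_2(s)$ plus Prop.~\ref{prop.import}(a),(b),(c) shows $\lambda_2(s)$ is consistent, so it extends to some $u\in S^c$ with $\lambda_2(s)\subseteq u$, i.e.\ $sR^c_2u$, giving $R^c_2(s)\neq\emptyset$; and $(\ref{prop.serial-5})\Rightarrow(\ref{prop.serial-1})$ is obtained by running the whole cycle once more in the ``dual'' direction — concretely, $(\ref{prop.serial-5})\Rightarrow\bot\notin\lambda_2(s)$ as in the first step, then the $\lambda_2$-version of $(\ref{prop.serial-2})\Rightarrow(\ref{prop.serial-3})$ gives $\circledS\psi\notin s$ for some $\psi$ (now with $\circledS(\neg\phi\land\chi)$ clauses: $\bot\notin\lambda_2(s)$ gives $\circledS(\neg\phi_0\land\chi_0)\notin s$ for some $\phi_0,\chi_0$ with $\neg\phi_0\land\chi_0$ consistent; pick this to be $\bot$-free), and then the $\lambda_1$-analogue of $(\ref{prop.serial-3})\Rightarrow(\ref{prop.serial-4})$ gives $\bot\notin\lambda_1(s)$, whence $R^c_1(s)\neq\emptyset$ by Lindenbaum. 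I expect the main obstacle to be bookkeeping the asymmetry between the $\vee$-shape in $\lambda_1$ and the $\land$-shape in $\lambda_2$ when instantiating the quantified defining conditions — in particular making sure the witness formula extracted at each step is genuinely of the right syntactic form and that the $\text{RE}\circledS$-rewriting is legitimate — rather than any deep difficulty; the propositional bridge $\text{DIS}\circledS$/$\text{CON}\circledS$ work has already been packaged into Prop.~\ref{prop.import}.
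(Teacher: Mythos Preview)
Your proposal is correct and follows essentially the same route as the paper, which proves the four biconditionals $1\iff2$, $2\iff3$, $3\iff4$, $4\iff5$; your cycle unwinds to the same implications once $5\Rightarrow1$ is expanded as $5\Rightarrow4\Rightarrow3\Rightarrow2\Rightarrow1$. One minor slip: in the $\lambda_2$-unfolding, $\bot\notin\lambda_2(s)$ gives $\boxplus(\neg\bot\land\chi_0)\notin s$ for some $\chi_0$ (the first slot is fixed as $\bot$, not a free $\phi_0$), but this is harmless since $\neg\bot\land\chi_0\equiv\chi_0$ via $\text{RE}\boxplus$ --- and that direct rewriting is exactly how the paper handles $3\iff4$, rather than detouring through Prop.~\ref{prop.import}(d).
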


\begin{proof}
We show $\ref{prop.serial-1}\iff \ref{prop.serial-2}$, $\ref{prop.serial-2}\iff\ref{prop.serial-3}$, $\ref{prop.serial-3}\iff \ref{prop.serial-4}$, and $\ref{prop.serial-4}\iff\ref{prop.serial-5}$.
\end{proof}

\begin{proof}
$\ref{prop.serial-1}\iff \ref{prop.serial-2}$: suppose towards contradiction that $R^c_1(s)\neq \emptyset$ but $\bot\in \lambda_1(s)$. Then $sR^c_1t$ for some $t\in S^c$, that is, $\lambda_1(s)\subseteq t$, and therefore $\bot\in t$: a contradiction. Conversely, assume that $\bot\notin \lambda_1(s)$, we need to show that $s$ has a $R^c_1$-successor. It suffices to show that $\lambda_1(s)$ is consistent. If not, there exists $\phi_1,\cdots,\phi_n\in\lambda_1(s)$ such that $\vdash\phi_1\land\cdots\land\phi_n\to\bot$. Using items (b) and (c) of Prop.~\ref{prop.import}, we can derive that $\bot\in\lambda_1(s)$, which is contrary to the assumption.

$\ref{prop.serial-2}\iff\ref{prop.serial-3}$: Suppose by contraposition that $\boxplus\psi\in s$ for all $\psi$. Since $\vdash\psi\lra\bot\vee\psi$, by RE$\boxplus$, it follows that $\vdash\boxplus\psi\lra\boxplus(\bot\vee\psi)$, and then $\boxplus(\bot\vee\psi)\in s$, and therefore $\bot\in \lambda_1(s)$. Conversely, assume that $\bot\in \lambda_1(s)$, then $\boxplus(\bot\vee\psi)\in s$ for all $\psi$, and thus $\boxplus\psi\in s$ for all $\psi$.

$\ref{prop.serial-3}\iff \ref{prop.serial-4}$: similar to the proof of $\ref{prop.serial-2}\iff\ref{prop.serial-3}$.

$\ref{prop.serial-4}\iff\ref{prop.serial-5}$: similar to the proof of $\ref{prop.serial-1}\iff \ref{prop.serial-2}$.
\end{proof}

\begin{corollary}
Define $\M^c$ as in Def.~\ref{def.cm-boxplus}. Then the following conditions are equivalent:
\begin{enumerate}
\item $R^c_1$ is serial. 
\item $\bot\notin \lambda_1(s)$ for any $s\in S^c$.
\item $\boxplus\psi\notin s$ for any $s\in S^c$ and for some $\psi$.
\item $\bot\notin \lambda_2(s)$ for any $s\in S^c$.
\item $R^c_2$ is serial.
\end{enumerate}
\end{corollary}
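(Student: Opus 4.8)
The plan is to obtain this Corollary as the immediate universal closure (over $s\in S^c$) of Proposition~\ref{prop.serial-equiv}. First I would recall that a relation $R^c_i$ on $S^c$ is serial precisely when $R^c_i(s)\neq\emptyset$ for every $s\in S^c$; thus condition~(1) of the Corollary is exactly the statement ``condition~\ref{prop.serial-1} of Prop.~\ref{prop.serial-equiv} holds at every $s\in S^c$'', and likewise condition~(5) of the Corollary is ``condition~\ref{prop.serial-5} there holds at every $s\in S^c$''. Conditions~(2), (3), (4) are already phrased with an outermost ``for any $s\in S^c$'' (where in (3) the intended reading is that for every $s$ there is \emph{some} $\psi$ with $\boxplus\psi\notin s$), so they are verbatim the universal closures of conditions~\ref{prop.serial-2}, \ref{prop.serial-3}, \ref{prop.serial-4} of Prop.~\ref{prop.serial-equiv}.

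Next I would invoke the observation that the five equivalences established in Prop.~\ref{prop.serial-equiv} are \emph{local}: each is an equivalence between properties of one and the same state $s$, with no interaction between distinct states. Hence, starting from the chain $\ref{prop.serial-1}\iff\ref{prop.serial-2}\iff\ref{prop.serial-3}\iff\ref{prop.serial-4}\iff\ref{prop.serial-5}$ valid at every $s\in S^c$, and using that $\forall s\,(P(s)\iff Q(s))$ entails $(\forall s\,P(s))\iff(\forall s\,Q(s))$, I get five universally quantified statements that are pairwise equivalent; chaining them yields the equivalence of conditions~(1)--(5) of the Corollary.

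There is essentially no obstacle here beyond making the quantifier bookkeeping explicit. The one point to keep straight is that condition~(3) must be read with the existential quantifier over $\psi$ nested inside the universal quantifier over $s$ — exactly as in condition~\ref{prop.serial-3} of Prop.~\ref{prop.serial-equiv} — so that the universal closure matches. No new model-theoretic or proof-theoretic ingredient is needed; the Corollary is a bookkeeping consequence of the preceding Proposition.
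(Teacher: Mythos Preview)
Your proposal is correct and matches the paper's approach: the Corollary is stated in the paper without any separate proof, as it follows immediately from Prop.~\ref{prop.serial-equiv} by taking the universal closure over $s\in S^c$, exactly as you describe. Your care about the quantifier nesting in condition~(3) is appropriate and the only point that deserves explicit mention.
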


As we cannot exclude the possibility that $\boxplus\psi\in s$ for some $s\in S$ and for all $\psi$, by the above result, we cannot provide that $R^c_1$ and $R^c_2$ are serial. We call such states $s$ `endpoints'. By Prop.~\ref{prop.serial-equiv}, $s$ has neither $R^c_1$-successors nor $R^c_2$-successors.

We handle these endpoints by using a similar strategy of `reflexivizing the arrows in the canonical model' used for showing the completeness of serial contingency logic in~\cite{Humberstone95,Fanetal:2015}. In detail, define $\M^{\bf D}=\lr{S^c,R^{\bf D}_1,R^{\bf D}_2,V^c}$ as $\M^c$ in Def.~\ref{def.cm-boxplus}, except that $R^{\bf D}_i=R^c_i\cup\{(s,s)\mid s\text{ is an endpoint}\}$. It should be obvious that $\M^{\bf D}$ is serial. Moreover, the truth values of $\mathcal{L}(\boxplus)$-formulas are invariant under the model transformation: for all $s\in S^c$, by Prop.~\ref{prop.serial-equiv}, $R^c_1(s)\neq \emptyset$ iff $R^c_2(s)\neq \emptyset$. If $s$ is an endpoint, then as $R^c_1(s)=R^c_2(s)=\emptyset$, it holds vacuously that $\M^c,s\vDash\boxplus\phi$; since $s\vDash\phi$ or $s\vDash\neg\phi$ and $R_1^{\bf D}(s)=R_2^{\bf D}(s)=\{s\}$, we have also that $\M^{\bf D},s\vDash\boxplus\phi$. If $s$ has both $R^c_1$- and $R^c_2$-successors, then it is clear that $\M^c,s\vDash\boxplus\phi$ iff $\M^{\bf D},s\vDash\boxplus\phi$, as desired. Consequently,



\begin{theorem}\label{thm.boxplus-serial}
${\bf K^\boxplus}$ is sound and strongly complete with respect to the class of serial bimodal frames.
\end{theorem}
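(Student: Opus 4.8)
The plan is to derive the theorem from the machinery already assembled, adding essentially nothing new for soundness and one small surgery for completeness. Soundness is immediate: every serial bimodal frame is in particular a bimodal frame, so the soundness of ${\bf K^\boxplus}$ with respect to the class of all bimodal frames specializes at once. The content lies in strong completeness, and here I would reuse the canonical model $\M^c$ of Def.~\ref{def.cm-boxplus} together with its Truth Lemma, repairing the only defect that prevents $\M^c$ from witnessing seriality.

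That defect is pinned down by Prop.~\ref{prop.serial-equiv}: the relations $R^c_1$ and $R^c_2$ fail seriality exactly at the \emph{endpoints}, i.e.\ those $s\in S^c$ with $\boxplus\psi\in s$ for every $\psi$, and at such $s$ both successor sets are empty; crucially, Prop.~\ref{prop.serial-equiv} also tells us $R^c_1(s)=\emptyset$ iff $R^c_2(s)=\emptyset$, so we never have a state with exactly one of the two successor sets empty. I would therefore pass to $\M^{\bf D}=\lr{S^c,R^{\bf D}_1,R^{\bf D}_2,V^c}$, identical to $\M^c$ except that a reflexive loop is adjoined at each endpoint: $R^{\bf D}_i=R^c_i\cup\{(s,s)\mid s\text{ an endpoint}\}$. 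By construction $\M^{\bf D}$ is serial.

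The key step is a lemma that this surgery preserves the truth value of every $\mathcal{L}(\boxplus)$-formula, proved by induction on $\phi$. Atomic and Boolean cases are trivial since $V^c$ is untouched. For $\boxplus\phi$ at a non-endpoint $s$ nothing changes, as $R^{\bf D}_i(s)=R^c_i(s)$; at an endpoint $s$ we have $\M^c,s\vDash\boxplus\phi$ vacuously, while in $\M^{\bf D}$ we have $R^{\bf D}_1(s)=R^{\bf D}_2(s)=\{s\}$, and since the maximal consistent set $s$ decides $\phi$ (so by the induction hypothesis $\M^{\bf D},s\vDash\phi$ or $\M^{\bf D},s\vDash\neg\phi$), either $R^{\bf D}_1(s)\vDash\phi$ or $R^{\bf D}_2(s)\vDash\neg\phi$, i.e.\ $\M^{\bf D},s\vDash\boxplus\phi$. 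Composing this invariance with the Truth Lemma for $\M^c$ yields $\phi\in s\iff\M^{\bf D},s\vDash\phi$ for all $s\in S^c$ and all $\phi\in\mathcal{L}(\boxplus)$.

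Strong completeness then follows in the standard fashion: given a ${\bf K^\boxplus}$-consistent set $\Gamma$, extend it by Lindenbaum's Lemma to a maximal consistent $s\in S^c$; then $\M^{\bf D},s\vDash\Gamma$ and $\M^{\bf D}$ is based on a serial bimodal frame. The main obstacle is really just the bookkeeping in the invariance lemma — in particular getting the endpoint case of the $\boxplus$-clause right, which is exactly where the ``$R^c_1(s)\neq\emptyset$ iff $R^c_2(s)\neq\emptyset$'' half of Prop.~\ref{prop.serial-equiv} is indispensable, since otherwise adding a loop on only one side could flip the value of some $\boxplus\phi$; everything else is routine.
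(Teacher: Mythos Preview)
Your proposal is correct and follows essentially the same route as the paper: reflexivize the endpoints of the canonical model and argue that this surgery preserves the truth of every $\mathcal{L}(\boxplus)$-formula, using Prop.~\ref{prop.serial-equiv} to ensure that $R^c_1(s)=\emptyset$ iff $R^c_2(s)=\emptyset$. The only minor difference is that you invoke maximal consistency and the induction hypothesis to get $\M^{\bf D},s\vDash\phi$ or $\M^{\bf D},s\vDash\neg\phi$ at an endpoint, whereas this is just the semantic law of excluded middle; but that is harmless.
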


\section{Axiomatizations for $\mathcal{L}(\boxdot)$}\label{sec.boxdot-logics}

This section first provides the minimal logic for $\mathcal{L}(\boxdot)$ and shows its soundness and completeness, then explores its extensions over special frames.

\subsection{Minimal logic}

\begin{definition}
The minimal logic of $\mathcal{L}(\boxdot)$, denoted ${\bf K^\boxdot}$, consists of the following axioms and inference rules:
\[
\begin{array}{ll}
\text{PC}&\text{All instances of propositional tautologies}\\
\boxdot\text{T}&\boxdot\top\\
\boxdot\text{EQU}&\boxdot\phi\lra\boxdot\neg\phi\\
\boxdot\text{CON}&\boxdot\phi\land\boxdot\psi\to\boxdot(\phi\land\psi)\\
\boxdot\text{DIS}&\boxdot\phi\to\boxdot(\phi\vee\psi)\vee\boxdot(\neg\phi\vee\chi)\\
\text{MP}&\dfrac{\phi~~~\phi\to\psi}{\psi}\\
\text{RE}\boxdot&\dfrac{\phi\lra\psi}{\boxdot\phi\lra\boxdot\psi}\\
\end{array}
\]
\end{definition}

\weg{\begin{definition}
Define a translation $t$ from $\mathcal{L}(\boxdot)$ to $\mathcal{L}(\Delta)$ as follows:
\[
\begin{array}{lll}
t(p)&=&p\\
t(\neg\phi)&=&\neg t(\phi)\\
t(\phi\land\psi)&=&t(\phi)\land t(\psi)\\
t(\boxdot\phi)&=&\Delta t(\phi)\\
\end{array}
\]
\end{definition}

\begin{proposition}
For all $\phi\in\mathcal{L}(\boxdot)$, then $\vdash_{{\bf K^\boxdot}}\phi~\iff~\vdash_{{\bf K^\Delta}} t(\phi).$
\end{proposition}

Define a $\Delta$-model $\M=\lr{S,R,V}$, we can construct a $\boxdot$-model $\M=\lr{S^\star,R^\star_1,R^\star_2,V^\star}$, where
\begin{itemize}
\item $S^\star=S$
\item $R^\star_1=R^\star_2=R$
\item $V^\star=V$.
\end{itemize}

\begin{proposition}
Let $\M=\lr{S,R,V}$ be a $\Delta$-model. For every $\phi\in \mathcal{L}(\boxdot)$ and for every $s\in S$, we have that $\M,s\vDash t(\phi)$ iff $\M^\star,s\vDash\phi$.
\end{proposition}

\begin{proof}
By induction on $\phi$. The nontrivial case is $\boxdot\phi$, that is to show, $\M,s\vDash \Delta t(\phi)$ iff $\M^\star,s\vDash\boxdot\phi$.

If $\M,s\nvDash\Delta t(\phi)$, then there exist $t,u\in S$ such that $sRt$ and $sRu$ and $\M,t\vDash t(\phi)$ and $\M,u\nvDash t(\phi)$. According to the construction of $\M^\star$ and the induction hypothesis, this means that $sR^\star_1t$ and $sR^\star_2u$ and $\M^\star,t\vDash \phi$ and $\M^\star,u\nvDash\phi$. Therefore, $\M^\star,s\nvDash\boxdot\phi$. The converse is similar.
\end{proof}

\begin{theorem}[Completeness of ${\bf K^\boxdot}$]
${\bf K^\boxdot}$ is (weakly) complete
\end{theorem}}

The proposition below will be used in Prop.~\ref{prop.boxdot-property}.

\begin{proposition}\label{prop.admissible-provable}
The rule $\dfrac{\phi\to\psi}{\boxdot\phi\land\boxdot(\psi\to\phi)\to \boxdot\psi}$, denoted $\text{wM}\boxdot$, is derivable in ${\bf K^\boxdot}$.
\end{proposition}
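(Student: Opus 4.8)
The plan is to derive $\text{wM}\boxdot$ syntactically from the axioms $\boxdot\text{T}$, $\boxdot\text{EQU}$, $\boxdot\text{CON}$, $\boxdot\text{DIS}$ together with $\text{MP}$ and $\text{RE}\boxdot$. The target rule has a single premise $\phi\to\psi$, so I expect the proof to start by noticing that under this premise $\psi$ is propositionally equivalent to $\phi\vee\psi$, and also $\neg\phi$ is equivalent to $\neg\phi\vee\neg\psi$; then $\text{RE}\boxdot$ lets me freely rewrite inside the $\boxdot$'s. The overall idea is: from $\boxdot\phi$ I want to conclude a disjunction via $\boxdot\text{DIS}$, then use $\boxdot\text{CON}$ against the second hypothesis $\boxdot(\psi\to\phi)$ to kill the unwanted disjunct, landing on something that $\text{RE}\boxdot$ rewrites to $\boxdot\psi$.

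Concretely, first I would instantiate $\boxdot\text{DIS}$ with the schematic letters chosen so that $\boxdot\phi \to \boxdot(\phi\vee\psi)\vee\boxdot(\neg\phi\vee\neg\psi)$ — i.e. take the "$\psi$" of the axiom to be our $\psi$ and the "$\chi$" to be $\neg\psi$. Since $\vdash\phi\to\psi$ gives $\vdash(\phi\vee\psi)\lra\psi$, by $\text{RE}\boxdot$ the first disjunct is $\boxdot\psi$. For the second disjunct $\boxdot(\neg\phi\vee\neg\psi)$: under the premise, $\psi\to\phi$ is equivalent to $\psi\lra\phi$... no — rather, I should combine $\boxdot(\neg\phi\vee\neg\psi)$ with the hypothesis $\boxdot(\psi\to\phi)$, i.e. $\boxdot(\neg\psi\vee\phi)$, using $\boxdot\text{CON}$ to get $\boxdot\big((\neg\phi\vee\neg\psi)\land(\neg\psi\vee\phi)\big)$. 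That conjunction is propositionally equivalent to $\neg\psi$ (distributing: $(\neg\phi\vee\neg\psi)\land(\phi\vee\neg\psi) \lra \neg\psi\vee(\neg\phi\land\phi)\lra\neg\psi$). So by $\text{RE}\boxdot$ we get $\boxdot\neg\psi$, and then $\boxdot\text{EQU}$ converts this to $\boxdot\psi$. Hence both disjuncts yield $\boxdot\psi$ (the second one under the extra assumption $\boxdot(\psi\to\phi)$), so by propositional reasoning $\boxdot\phi\land\boxdot(\psi\to\phi)\to\boxdot\psi$.

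The main obstacle — really the only place one has to be careful — is getting the instantiation of $\boxdot\text{DIS}$ right and checking that the relevant propositional equivalences hold so that $\text{RE}\boxdot$ applies; in particular one must confirm $(\neg\phi\vee\neg\psi)\land(\neg\psi\vee\phi)$ simplifies to $\neg\psi$ and that $(\phi\vee\psi)\lra\psi$ follows from $\phi\to\psi$. These are routine tautology checks. A secondary point is that $\text{RE}\boxdot$ must be used as a derived rule allowing substitution of provable equivalents inside $\boxdot$; since it is a primitive rule of ${\bf K^\boxdot}$, this is immediate. Everything else is bookkeeping with $\text{MP}$ and propositional logic, so I would present the derivation as a short chain of formulas with the axiom/rule used annotated at each step.
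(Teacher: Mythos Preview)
Your argument is correct: the instance of $\boxdot\text{DIS}$ you chose gives $\boxdot\phi\to\boxdot(\phi\vee\psi)\vee\boxdot(\neg\phi\vee\neg\psi)$; under the premise $\vdash\phi\to\psi$ the first disjunct rewrites to $\boxdot\psi$ via $\text{RE}\boxdot$, and combining the second disjunct with $\boxdot(\psi\to\phi)$ via $\boxdot\text{CON}$ yields $\boxdot\neg\psi$ after the tautology $(\neg\phi\vee\neg\psi)\land(\neg\psi\vee\phi)\lra\neg\psi$, whence $\boxdot\psi$ by $\boxdot\text{EQU}$. All the propositional equivalences you flag are genuine tautologies, so the derivation goes through.

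The paper takes a different route that avoids $\boxdot\text{DIS}$ altogether. From $\vdash\phi\to\psi$ it first gets $\vdash\boxdot\phi\lra\boxdot(\phi\land\psi)$ by $\text{RE}\boxdot$, then uses $\boxdot\text{EQU}$ to pass to $\boxdot(\phi\to\neg\psi)$ (since $\phi\land\psi\lra\neg(\phi\to\neg\psi)$); the hypothesis $\boxdot(\psi\to\phi)$ is rewritten by $\text{RE}\boxdot$ to $\boxdot(\neg\phi\to\neg\psi)$, and a single application of $\boxdot\text{CON}$ to these two gives $\boxdot((\phi\to\neg\psi)\land(\neg\phi\to\neg\psi))$, which is $\boxdot\neg\psi$ and hence $\boxdot\psi$. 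So the paper's proof is a straight line through $\boxdot\text{CON}+\boxdot\text{EQU}+\text{RE}\boxdot$, while yours is a case split via $\boxdot\text{DIS}$. The paper's version is slightly more economical in that it shows $\text{wM}\boxdot$ already follows from the $\text{DIS}$-free fragment; your version is arguably more transparent because the two disjuncts correspond to the two ``semantic'' reasons $\boxdot\phi$ can hold.
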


\begin{proof}
Suppose that $\vdash\phi\to\psi$, then $\vdash\phi\lra(\phi\land\psi)$. By $\text{RE}\boxdot$, we have $\vdash\boxdot\phi\lra\boxdot(\phi\land\psi)$. By axiom $\boxdot\text{CON}$, $\vdash\boxdot(\phi\to\neg\psi)\land\boxdot(\neg\phi\to\neg\psi)\to \boxdot((\phi\to\neg\psi)\land(\neg\phi\to\neg\psi))$. Since $\vdash(\phi\to\neg\psi)\land(\neg\phi\to\neg\psi)\lra \neg\psi$, by $\text{RE}\boxdot$ it follows that $\vdash\boxdot((\phi\to\neg\psi)\land(\neg\phi\to\neg\psi))\lra \boxdot\neg\psi$. Using PC, $\boxdot\text{EQU}$ and $\text{RE}\boxdot$, we obtain $\vdash\boxdot(\phi\land\psi)\lra \boxdot(\phi\to\neg\psi)$ and $\vdash\boxdot(\psi\to\phi)\lra \boxdot(\neg\phi\to\neg\psi)$ and $\vdash\boxdot\neg\psi\lra\boxdot\psi$, and therefore $\vdash\boxdot\phi\land\boxdot(\psi\to\phi)\to \boxdot\psi$.
\end{proof}

\begin{proposition}
${\bf K^\boxdot}$ is sound with respect to the class of all bimodal frames.
\end{proposition}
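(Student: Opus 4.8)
The plan is to verify the validity of each axiom and the soundness of each inference rule of ${\bf K^\boxdot}$ on an arbitrary bimodal model $\M=\lr{S,R_1,R_2,V}$, working from the alternative semantics for $\boxdot$ noted earlier in the paper, namely $\M,s\vDash\boxdot\phi$ iff $(R_1(s)\vDash\phi\text{ or }R_2(s)\vDash\neg\phi)$ and $(R_1(s)\vDash\neg\phi\text{ or }R_2(s)\vDash\phi)$. The propositional tautologies (PC) and MP are routine, and RE$\boxdot$ follows immediately since the truth value of $\boxdot\phi$ at $s$ depends only on the extension of $\phi$ over $R_1(s)\cup R_2(s)$. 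The axiom $\boxdot\text{T}$ holds because $R_1(s)\vDash\top$ and $R_2(s)\vDash\top$ trivially, so both disjuncts are satisfied; $\boxdot\text{EQU}$ is immediate from the symmetry of the alternative semantic clause under swapping $\phi$ with $\neg\phi$ (which swaps the two conjuncts).

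The substantive cases are $\boxdot\text{CON}$ and $\boxdot\text{DIS}$. For $\boxdot\text{CON}$, I would assume $\M,s\vDash\boxdot\phi\land\boxdot\psi$ and split on which disjuncts hold for each. The key observation is that $R_i(s)\vDash\phi$ and $R_i(s)\vDash\psi$ give $R_i(s)\vDash\phi\land\psi$, while $R_i(s)\vDash\neg\phi$ or $R_i(s)\vDash\neg\psi$ gives $R_i(s)\vDash\neg(\phi\land\psi)$. Checking the first conjunct of $\boxdot(\phi\land\psi)$: from $\boxdot\phi$ we get $R_1(s)\vDash\phi$ or $R_2(s)\vDash\neg\phi$; if $R_2(s)\vDash\neg\phi$ we are done, so suppose $R_1(s)\vDash\phi$, and similarly assume $R_1(s)\vDash\psi$, whence $R_1(s)\vDash\phi\land\psi$. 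For the second conjunct, from $\boxdot\phi$ we get $R_1(s)\vDash\neg\phi$ or $R_2(s)\vDash\phi$; if $R_1(s)\vDash\neg\phi$ we are done, so suppose $R_2(s)\vDash\phi$ and similarly $R_2(s)\vDash\psi$, giving $R_2(s)\vDash\phi\land\psi$. A short case analysis closes all combinations.

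For $\boxdot\text{DIS}$, I would assume $\M,s\vDash\boxdot\phi$; to show $\M,s\vDash\boxdot(\phi\vee\psi)\vee\boxdot(\neg\phi\vee\chi)$, I would argue by cases on which of the two disjuncts characterizing $\boxdot\phi$ hold. Using $R_1(s)\vDash\phi$ to derive $R_1(s)\vDash\phi\vee\psi$, and $R_2(s)\vDash\neg\phi$ to derive $R_2(s)\vDash\neg(\phi\vee\psi)$ as well as $R_1(s)\vDash\neg\phi$ to derive $R_1(s)\vDash\neg\phi\vee\chi$, and $R_2(s)\vDash\phi$ to derive $R_2(s)\vDash\neg(\neg\phi\vee\chi)$ — here I expect the main obstacle: one must track that $\boxdot\phi$ gives one of four conjunction-of-disjunction patterns and check that in each pattern at least one of $\boxdot(\phi\vee\psi)$ or $\boxdot(\neg\phi\vee\chi)$ has both its conjuncts satisfied, being careful that the ``weakening'' $R_i(s)\vDash\alpha$ implies $R_i(s)\vDash\alpha\vee\beta$ is used in the right direction and that $\neg(\phi\vee\psi)$ behaves like $\neg\phi\land\neg\psi$. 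I expect this bookkeeping, rather than any conceptual difficulty, to be where care is needed; once the case split is laid out cleanly the verification is mechanical.
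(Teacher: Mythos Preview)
Your plan is sound, but it diverges from the paper's argument in a way worth noting. The paper works directly with the primary semantics (DEF~1$'$), i.e.\ $\M,s\vDash\boxdot\phi$ iff for all $t,u$ with $sR_1t$ and $sR_2u$, $(\M,t\vDash\phi\iff\M,u\vDash\phi)$. With that clause, $\boxdot\text{CON}$ is a one-liner (two biconditionals at the same pair $(t,u)$ compose to a biconditional for the conjunction), and $\boxdot\text{DIS}$ is handled by observing that $\boxdot\phi$ forces $\phi$ to have a uniform truth value across $R_1(s)\cup R_2(s)$ (or one of the successor sets is empty), so either $\phi\vee\psi$ or $\neg\phi\vee\chi$ is true everywhere there. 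No four-way case split is needed.

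Your route via the alternative $\boxplus$-style semantics is perfectly legitimate and yields the same result, but it trades conceptual brevity for bookkeeping: you end up with two conjuncts to verify for each $\boxdot$-formula and a $2\times 2$ case split for $\boxdot\text{DIS}$. One caution: two of the implications you list in passing, namely ``$R_2(s)\vDash\neg\phi$ gives $R_2(s)\vDash\neg(\phi\vee\psi)$'' and ``$R_2(s)\vDash\phi$ gives $R_2(s)\vDash\neg(\neg\phi\vee\chi)$'', are false as stated (weakening goes the wrong way). You seem aware of this from your closing remark, and indeed the correct four-case analysis never needs those steps --- the two ``mixed'' cases ($R_1(s)\vDash\phi$ with $R_2(s)\vDash\phi$, and $R_1(s)\vDash\neg\phi$ with $R_2(s)\vDash\neg\phi$) each make one of the target disjuncts hold outright, while the two ``degenerate'' cases force $R_1(s)=\emptyset$ or $R_2(s)=\emptyset$. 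Just be sure your written proof does not invoke the bad implications.
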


\begin{proof}
We only show the validity of axioms $\boxdot\text{CON}$ and $\boxdot\text{DIS}$. Let $\M=\lr{S,R_1,R_2,V}$ be an arbitrary bimodal model and $s\in S$.

For the validity of $\boxdot\text{CON}$, suppose that $\M,s\vDash\boxdot\phi\land\boxdot\psi$, then for all $t,u$ such that $sR_1t$ and $sR_2u$, we have that ($t\vDash\phi$ iff $u\vDash\phi$), and also that ($t\vDash\psi$ iff $u\vDash\psi$), thus $t\vDash\phi\land\psi$ iff ($t\vDash\phi$ and $t\vDash\psi$) iff ($u\vDash\phi$ and $u\vDash\psi$) iff $u\vDash\phi\land\psi$, and thus $s\vDash\boxdot(\phi\land\psi)$.

For the validity of $\boxdot\text{DIS}$, suppose that $\M,s\vDash\boxdot\phi$, then for all $t,u$ such that $sR_1t$ and $sR_2u$, we have that ($t\vDash\phi$ iff $u\vDash\phi$). If $\phi$ is true at both $t$ and $u$, then so is $\phi\vee\psi$; if $\phi$ is false at both $t$ and $u$, then $\neg\phi$ is true at both points, and so is $\neg\phi\vee\chi$. Therefore, $\M,s\vDash\boxdot(\phi\vee\psi)\vee\boxdot(\neg\phi\vee\chi)$, as desired.
\end{proof}

\medskip

\weg{\begin{definition}
$sR^c_1t$ iff $sR^c_2t$ iff $\lambda(s)\subseteq t$, where $\lambda(s)=\{\phi\mid \boxdot(\phi\vee\psi)\in s\text{ for all }\psi\}$.
\end{definition}

\begin{proposition}
Let $s\in S^c$.
\begin{enumerate}
\item $\lambda(s)\neq \emptyset.$
\item If $\boxdot\phi\in s$, then $\phi\in\lambda(s)$ or $\neg\phi\in\lambda(s)$.
\item If $\phi,\psi\in\lambda(s)$, then $\phi\land\psi\in\lambda(s)$.
\item If $\phi\in\lambda(s)$ and $\vdash\phi\to\psi$, then $\psi\in\lambda(s)$.
\end{enumerate}
\end{proposition}

\begin{lemma}[Truth Lemma for ${\bf K^\boxdot}$]
For all $s\in S^c$, for all $\phi\in\mathcal{L}(\boxdot)$, we have
$$\M^c,s\vDash\phi\text{ iff }\phi\in s.$$
\end{lemma}

\begin{proof}
By induction on $\phi\in\mathcal{L}(\boxdot)$. The nontrivial case is $\boxdot \phi$.

Suppose that $\boxdot\phi\in s$, then by Prop.~, it follows that $\phi\in\lambda(s)$ or $\neg\phi\in\lambda(s)$. Therefore, for all $t,u\in S^c$ such that $sR^c_1t$ and $sR^c_2u$, either ($\phi\in t$ and $\phi\in u$) or ($\neg\phi\in t$ and $\neg\phi\in u$), and therefore $\phi\in t$ iff $\phi\in u$, which by induction hypothesis entails that $\M^c,s\vDash\boxdot\phi$.
\weg{Suppose for reductio that $\boxdot\phi\in s$ but $\M^c,s\nvDash\boxdot\phi$. By induction hypothesis, there exist $t,u\in S^c$ such that $sR^c_1t$ and $sR^c_2u$ and it is {\em not} the case that $(\phi\in t\iff \phi\in u)$. We consider two cases:
\begin{itemize}
\item $\phi\in t$ but $\phi\notin u$. Thus $\neg\phi\notin t$. Since $sR^c_1t$, we have $\boxdot(\neg\phi\vee\chi)\notin s$ for some $\chi$. Also, since $sR^c_2u$ and $\phi\notin u$, we obtain $\boxdot(\phi\vee\psi)\notin s$ for some $\psi$. By $\boxdot\phi\in s$ and axiom ..., we derive $\boxdot(\phi\vee\psi)\vee\boxdot(\neg\phi\vee\chi)\in s$: a contradiction.
\item $\phi\notin t$ but $\phi\in u$. With a similar argument to the first case, we can arrive at a contradiction, as desired.
\end{itemize}}

Conversely, assume that $\boxdot\phi\notin s$, by induction hypothesis, we need to find two states $t,u\in S^c$ such that $sR^c_1t$ and $sR^c_2u$ and it is {\em not} the case that $(\phi\in t\iff \phi\in u)$. For this, it suffices to show that $\Gamma_1=\lambda(s)\cup\{\phi\}$ and $\Gamma_2=\lambda(s)\cup\{\neg\phi\}$ are both consistent.

If $\Gamma_1$ is not consistent, then there are $\phi_1,\cdots,\phi_m\in\lambda(s)$ such that $\vdash \phi_1\land\cdots\land\phi_m\to\neg\phi$. Since $\phi_1,\cdots,\phi_m\in\lambda(s)$, using Prop.~\ref{} for $m-1$ times, we can show that $\phi_1\land\cdots\land\phi_m\in\lambda(s)$. Then by Prop.~, we infer that $\neg\phi\in\lambda(s)$, which implies that $\boxdot\neg\phi\in s$. This contradicts the assumption and the axiom.

The consistency of $\Gamma_2$ can be shown analogously.
\end{proof}}

In the remainder of this subsection, we show the strong completeness of ${\bf K^\boxdot}$. The following canonical model is inspired by that of the minimal noncontingency logic in~\cite{Fanetal:2015} and the similarity between $\boxdot$-axioms and $\Delta$-axioms.
\begin{definition}\label{def.cm-boxdot} A tuple $\M^c=\{S^c,R^c_1,R^c_2,V^c\}$ is the {\em canonical model} of ${\bf K^{\boxdot}}$, if
\begin{itemize}
\item $S^c=\{s\mid s\text{ is a maximal }{\bf K^{\boxdot}}\text{-consistent set}\}$,
\item For $i\in\{1,2\}$, $sR^c_it$ iff there exists $\chi$ such that
\begin{enumerate}
\item $\neg\boxdot\chi\in s$ and
\item for all $\phi$, if $\boxdot\phi\land\boxdot(\chi\to\phi)\in s$, then $\phi\in t$.
\end{enumerate}
\item $V^c(p)=\{s\in S^c\mid p\in s\}$.
\end{itemize}
\end{definition}

Note that $R^c_1=R^c_2$. This fact will make our proofs much more convenient.

\weg{Given that $\neg\boxdot\chi\in s$ for some $\chi$, we define $\lambda(s)=\{\phi\mid \boxdot\phi\land\boxdot(\chi\to\phi)\}$; otherwise, $\lambda(s)$ is undefined.

NOT WORK!\begin{definition}
Let $s$ be a maximal consistent set. If $\boxdot\alpha\in s$ for all $\alpha$, then define $\lambda(s)=\emptyset$. Otherwise, we enumerate all the elements of $\{\alpha\mid \neg\boxdot\alpha\in s\}$ and denote the first element as $\eta$ and define $\lambda(s)=\{\phi\mid \boxdot\phi\land\boxdot(\eta\to\phi)\in s\}$.
\end{definition}

\begin{definition} A triple $\M^c=\{S^c,R^c_1,R^c_2,V^c\}$ is the canonical model, if
\begin{itemize}
\item $S^c=\{s\mid s\text{ is a maximal consistent set}\}$,
\item $sR^c_it$ iff $\lambda(s)\subseteq t$, where $i=1,2$,
\item $V^c(p)=\{s\in S^c\mid p\in s\}$.
\end{itemize}
\end{definition}}

\weg{\begin{definition} A triple $\M^c=\{S^c,R^c_1,R^c_2,V^c\}$ is the canonical model, if
\begin{itemize}
\item $S^c=\{s\mid s\text{ is a maximal consistent set}\}$,
\item $sR^c_it$ iff $\lambda_\chi(s)\subseteq t$ for some $\neg\boxdot\chi\in s$, where $i=1,2$,
\item $V^c(p)=\{s\in S^c\mid p\in s\}$.
\end{itemize}
\end{definition}}

\begin{proposition}\label{prop.boxdot-property}
Let $s\in S^c$, $\boxdot\phi\notin s$ and $\Gamma(s)=\{\psi\mid \boxdot\psi\land\boxdot(\phi\to\psi)\in s\}$. Then
\begin{enumerate}
\item\label{boxdot1} $\Gamma(s)$ is nonempty.
\item\label{boxdot2} If $\psi,\chi\in\Gamma(s)$, then $\psi\land\chi\in\Gamma(s)$.
\item\label{boxdot3} If $\psi\in\Gamma(s)$, then $\nvdash\psi\to\phi$.
\item\label{boxdot4} $\Gamma(s)\cup\{\phi\}$ and $\Gamma(s)\cup\{\neg\phi\}$ are both consistent.
\end{enumerate}
\end{proposition}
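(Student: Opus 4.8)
The plan is to verify the four items in the stated order, each time arguing inside the maximal ${\bf K^\boxdot}$-consistent set $s$ and using only propositional reasoning, the axioms $\boxdot\text{T}$, $\boxdot\text{EQU}$, $\boxdot\text{CON}$, the rule $\text{RE}\boxdot$, and the derived rule $\text{wM}\boxdot$ of Prop.~\ref{prop.admissible-provable}. For item~\ref{boxdot1} I would show $\top\in\Gamma(s)$: axiom $\boxdot\text{T}$ gives $\boxdot\top\in s$, and since $\vdash(\phi\to\top)\lra\top$, $\text{RE}\boxdot$ turns this into $\boxdot(\phi\to\top)\in s$, so $\boxdot\top\land\boxdot(\phi\to\top)\in s$. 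For item~\ref{boxdot2}, assume $\psi,\chi\in\Gamma(s)$, so all four of $\boxdot\psi,\boxdot\chi,\boxdot(\phi\to\psi),\boxdot(\phi\to\chi)$ lie in $s$; two uses of $\boxdot\text{CON}$ yield $\boxdot(\psi\land\chi)\in s$ and $\boxdot((\phi\to\psi)\land(\phi\to\chi))\in s$, and since $\vdash((\phi\to\psi)\land(\phi\to\chi))\lra(\phi\to(\psi\land\chi))$, $\text{RE}\boxdot$ rewrites the second as $\boxdot(\phi\to(\psi\land\chi))\in s$, whence $\psi\land\chi\in\Gamma(s)$; iterating gives closure under arbitrary finite conjunctions, which is what item~\ref{boxdot4} actually needs. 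For item~\ref{boxdot3}, assume $\psi\in\Gamma(s)$ and, for contradiction, $\vdash\psi\to\phi$; applying $\text{wM}\boxdot$ to this (with its two schematic letters instantiated as $\psi$ and $\phi$) gives $\vdash\boxdot\psi\land\boxdot(\phi\to\psi)\to\boxdot\phi$, and since the antecedent is in $s$ by $\psi\in\Gamma(s)$, we get $\boxdot\phi\in s$, contradicting $\boxdot\phi\notin s$.

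For item~\ref{boxdot4} both consistency statements reduce, using item~\ref{boxdot2} and the fact that $\top\in\Gamma(s)$ (item~\ref{boxdot1}) lets us pad any finite conjunction, to producing a single witness $\psi\in\Gamma(s)$ with an appropriate provable implication. If $\Gamma(s)\cup\{\neg\phi\}$ were inconsistent we would obtain $\psi\in\Gamma(s)$ with $\vdash\psi\to\phi$, contradicting item~\ref{boxdot3} directly. If $\Gamma(s)\cup\{\phi\}$ were inconsistent we would obtain $\psi\in\Gamma(s)$ with $\vdash\psi\to\neg\phi$; then $\vdash(\phi\to\psi)\lra\neg\phi$, so $\text{RE}\boxdot$ turns $\boxdot(\phi\to\psi)\in s$ into $\boxdot\neg\phi\in s$, and $\boxdot\text{EQU}$ then yields $\boxdot\phi\in s$ — again contradicting $\boxdot\phi\notin s$.

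The main obstacle is this last sub-case, the consistency of $\Gamma(s)\cup\{\phi\}$: it cannot be handled by a bare appeal to item~\ref{boxdot3}, since $\Gamma(s)$ is defined asymmetrically in $\phi$ (its component $\boxdot(\phi\to\psi)$ is not interchangeable with $\boxdot(\neg\phi\to\psi)$). The resolution is to exploit precisely that $\boxdot(\phi\to\psi)$-component together with $\boxdot\text{EQU}$; this is essentially the one point in the argument where $\boxdot\text{EQU}$ — the axiom peculiar to $\boxdot$ and absent from the $\boxplus$-axiomatization — does real work.
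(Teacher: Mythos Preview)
Your proof is correct and follows essentially the same route as the paper for items~\ref{boxdot1}--\ref{boxdot3}: the paper also takes $\top$ as the witness for nonemptiness, uses $\boxdot\text{CON}$ (plus $\text{RE}\boxdot$) for closure under conjunction, and invokes $\text{wM}\boxdot$ for item~\ref{boxdot3}.

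For item~\ref{boxdot4} your treatment is in fact more careful than the paper's. The paper handles both halves by reducing to a single $\psi\in\Gamma(s)$ via item~\ref{boxdot2} and then simply says this ``contradicts~\ref{boxdot3}'' in each case. That is literally correct only for $\Gamma(s)\cup\{\neg\phi\}$, where one obtains $\vdash\psi\to\phi$. For $\Gamma(s)\cup\{\phi\}$ one obtains $\vdash\psi\to\neg\phi$, which is not the hypothesis of item~\ref{boxdot3} as stated; the paper's appeal there is tacit and relies on the reader supplying exactly the argument you wrote out (from $\vdash\psi\to\neg\phi$ one gets $\vdash(\phi\to\psi)\lra\neg\phi$, hence $\boxdot\neg\phi\in s$ via $\text{RE}\boxdot$, hence $\boxdot\phi\in s$ via $\boxdot\text{EQU}$). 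Your observation that this is precisely where $\boxdot\text{EQU}$ does essential work is on point; the paper flags this only implicitly, via the opening remark ``Then $\boxdot\neg\phi\notin s$''.
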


\begin{proof} Suppose that the preconditions hold. Then $\boxdot\neg\phi\notin s$.
\begin{enumerate}
\item Straightforward because $\vdash\boxdot\top$.
\item Assume that $\psi,\chi\in\Gamma(s)$, then $\boxdot\psi\land\boxdot(\phi\to\psi)\in s$ and $\boxdot\chi\land\boxdot(\phi\to\chi)\in s$. By axiom $\boxdot\text{CON}$, it follows that $\boxdot(\psi\land\chi)\land\boxdot(\phi\to\psi\land\chi)\in s$, and therefore $\psi\land\chi\in\Gamma(s)$.
\item Assume for reductio that $\psi\in\Gamma(s)$ and $\vdash\psi\to\phi$. Then $\boxdot\psi\land\boxdot(\phi\to\psi)\in s$ and $\vdash\boxdot\psi\land\boxdot(\phi\to\psi)\to\boxdot\phi$ (by the rule $\text{wM}\boxdot$ in Prop.~\ref{prop.admissible-provable}), and therefore $\boxdot\phi\in s$, which contradicts the supposition that $\boxdot\phi\notin s$.
\item Assume that $\Gamma(s)\cup\{\phi\}$ is inconsistent, then there exists $\psi_1,\cdots,\psi_m\in\Gamma(s)$ (\ref{boxdot1} provides the nonempty of $\Gamma(s)$) such that $\vdash\psi_1\land\cdots\land\psi_m\to\neg\phi$. By application of~\ref{boxdot2} for $m-1$ times, we can obtain that $\psi_1\land\cdots\land\psi_m\in\Gamma(s)$, which contradicts~\ref{boxdot3}. Thus $\Gamma(s)\cup\{\phi\}$ is consistent. Similarly, we can conclude that $\Gamma(s)\cup\{\neg\phi\}$ is consistent.
\end{enumerate}
\end{proof}

\begin{lemma}[Truth Lemma for ${\bf K^{\boxdot}}$]\label{lem.truth-lemma-k}
For all $s\in S^c$, for all $\phi\in\mathcal{L}(\boxdot)$, we have
$$\M^c,s\vDash\phi\text{ iff }\phi\in s.$$
\end{lemma}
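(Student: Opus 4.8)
The plan is to prove the Truth Lemma by induction on the structure of $\phi\in\mathcal{L}(\boxdot)$, with the only nontrivial case being $\phi=\boxdot\psi$. The atomic case holds by the definition of $V^c$, and the Boolean cases are routine using maximal consistency. So the work is to establish, for all $s\in S^c$ and all $\psi$, that $\boxdot\psi\in s$ iff $\M^c,s\vDash\boxdot\psi$, assuming the induction hypothesis that membership coincides with satisfaction for $\psi$ at all states. Since $R^c_1=R^c_2$ (as noted after Def.~\ref{def.cm-boxdot}), I write $R^c$ for both; then $\M^c,s\vDash\boxdot\psi$ means precisely that all $R^c$-successors of $s$ agree on $\psi$, i.e. either $\psi\in t$ for all $t$ with $sR^ct$, or $\psi\notin t$ for all such $t$.

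First I would handle the direction $\boxdot\psi\in s\Rightarrow\M^c,s\vDash\boxdot\psi$. Suppose $\boxdot\psi\in s$ but, for contradiction, that there are $R^c$-successors $t,u$ of $s$ with $\psi\in t$ and $\psi\notin u$ (using the IH to translate satisfaction into membership). By the definition of $R^c$ there is some $\chi$ with $\neg\boxdot\chi\in s$ such that every $\phi$ with $\boxdot\phi\land\boxdot(\chi\to\phi)\in s$ lies in $t$; contrapositively, $\psi\notin t$ would follow if $\neg(\boxdot\psi\land\boxdot(\chi\to\psi))\notin s$ fails — but here I need the argument the other way. The cleaner route: since $\neg\boxdot\chi\in s$, i.e. $\boxdot\chi\notin s$, and $\boxdot\psi\in s$, I want to derive a contradiction from $\psi\in t$, $\psi\notin u$. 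Because $sR^cu$ witnessed by some $\chi'$ with $\neg\boxdot\chi'\in s$, and $\psi\notin u$, it is not the case that $\boxdot\psi\land\boxdot(\chi'\to\psi)\in s$; since $\boxdot\psi\in s$, this forces $\boxdot(\chi'\to\psi)\notin s$. Symmetrically, from $\neg\psi\notin t$ and $sR^ct$ I get $\boxdot(\chi\to\neg\psi)\notin s$ for the witness $\chi$ of $t$ — but I should combine this with $\boxdot\text{DIS}$ and $\boxdot\text{EQU}$ applied to $\psi$ to obtain that $\boxdot(\psi\vee\sigma)\vee\boxdot(\neg\psi\vee\tau)\in s$ for suitable $\sigma,\tau$, contradicting what the two witnesses give. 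I expect that the intended argument is actually simpler: from $\boxdot\psi\in s$ and Prop.~\ref{prop.boxdot-property} applied with $\boxdot\chi\notin s$, one shows $\psi\in\Gamma(s)$ or $\neg\psi\in\Gamma(s)$, hence $\psi$ or $\neg\psi$ lies in every $R^c$-successor — and this is the key sublemma I would isolate: if $\boxdot\psi\in s$ and $\boxdot\chi\notin s$ then $\boxdot\psi\land\boxdot(\chi\to\psi)\in s$ or $\boxdot\psi\land\boxdot(\chi\to\neg\psi)\in s$, provable from $\boxdot\text{CON}$, $\boxdot\text{DIS}$, $\boxdot\text{EQU}$ and $\text{RE}\boxdot$.

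For the converse, $\boxdot\psi\notin s\Rightarrow\M^c,s\nvDash\boxdot\psi$, I would set $\phi:=\psi$ in Prop.~\ref{prop.boxdot-property} (its hypothesis $\boxdot\phi\notin s$ is exactly $\boxdot\psi\notin s$) and let $\Gamma(s)=\{\rho\mid\boxdot\rho\land\boxdot(\psi\to\rho)\in s\}$. By part~\ref{boxdot4} of that proposition, both $\Gamma(s)\cup\{\psi\}$ and $\Gamma(s)\cup\{\neg\psi\}$ are consistent, so by Lindenbaum's Lemma they extend to maximal consistent sets $t\supseteq\Gamma(s)\cup\{\psi\}$ and $u\supseteq\Gamma(s)\cup\{\neg\psi\}$. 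I then verify $sR^ct$ and $sR^cu$: the witness $\chi$ required in Def.~\ref{def.cm-boxdot} is $\psi$ itself, since $\neg\boxdot\psi\in s$ by assumption, and for every $\phi$ with $\boxdot\phi\land\boxdot(\psi\to\phi)\in s$ we have $\phi\in\Gamma(s)\subseteq t$ and $\phi\in\Gamma(s)\subseteq u$. Thus $t$ and $u$ are $R^c$-successors of $s$ with $\psi\in t$ and $\psi\notin u$; by the induction hypothesis $\M^c,t\vDash\psi$ and $\M^c,u\nvDash\psi$, so $\M^c,s\nvDash\boxdot\psi$.

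The main obstacle I anticipate is the first direction: unlike the standard noncontingency setting where one defines a single $\lambda(s)$, here $R^c$-successors may be witnessed by different formulas $\chi$, so I cannot simply say ``$\psi$ or $\neg\psi$ belongs to $\lambda(s)$.'' The crux is the sublemma that $\boxdot\psi\in s$ together with $\boxdot\chi\notin s$ (for any witness $\chi$) yields $\boxdot\psi\land\boxdot(\chi\to\psi)\in s$ or $\boxdot\psi\land\boxdot(\chi\to\neg\psi)\in s$; proving this cleanly from $\boxdot\text{DIS}$ and $\boxdot\text{EQU}$, and checking it applies uniformly to whatever witness each successor carries, is where the real care is needed. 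Everything else — atomic and Boolean cases, the Lindenbaum step, verifying the $R^c$-conditions with witness $\psi$ — is routine given Prop.~\ref{prop.boxdot-property}.
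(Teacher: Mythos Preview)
Your proposal is essentially correct and matches the paper, but you undersell the route that actually works and overcomplicate by pivoting to a sublemma that is not needed.

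For the converse direction, your argument is exactly the paper's: with $\boxdot\psi\notin s$, take $\Gamma(s)=\{\rho\mid\boxdot\rho\land\boxdot(\psi\to\rho)\in s\}$, use Prop.~\ref{prop.boxdot-property}(\ref{boxdot4}) to get both $\Gamma(s)\cup\{\psi\}$ and $\Gamma(s)\cup\{\neg\psi\}$ consistent, extend by Lindenbaum, and note that $\psi$ itself serves as the witness $\chi$ for both $sR^ct$ and $sR^cu$. Nothing to add.

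For the forward direction, the ``cleaner route'' you sketch in the middle of your paragraph \emph{is} the paper's proof, and it goes through without any further sublemma. Concretely: from $\phi\in t$ (so $\neg\phi\notin t$) and the witness $\chi$ for $sR^ct$, together with $\boxdot\neg\phi\in s$ (by $\boxdot\text{EQU}$), the defining condition of $R^c$ forces $\boxdot(\chi\to\neg\phi)\notin s$, i.e.\ $\boxdot(\neg\phi\vee\neg\chi)\notin s$. From $\phi\notin u$ and the witness $\chi'$ for $sR^cu$, together with $\boxdot\phi\in s$, one gets $\boxdot(\chi'\to\phi)\notin s$, i.e.\ $\boxdot(\phi\vee\neg\chi')\notin s$. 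A single instance of $\boxdot\text{DIS}$, namely $\boxdot\phi\to\boxdot(\phi\vee\neg\chi')\vee\boxdot(\neg\phi\vee\neg\chi)$, then yields $\boxdot\phi\notin s$, contradiction. The two successors may well carry different witnesses $\chi,\chi'$; this causes no trouble because $\boxdot\text{DIS}$ has two independent schematic letters on the right. So the ``main obstacle'' you anticipate simply does not arise in this direct argument --- it is an artifact of the alternative sublemma approach (``either $\boxdot(\chi\to\psi)\in s$ or $\boxdot(\chi\to\neg\psi)\in s$''), which is correct but unnecessary and, as you note, would require extra work to show the disjunct chosen is uniform across successors.
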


\begin{proof}
By induction on $\phi\in\mathcal{L}(\boxdot)$. The nontrivial case is $\boxdot\phi$.

Suppose that $\boxdot\phi\in s$ (thus $\boxdot\neg\phi\in s$), to show that $\M^c,s\vDash\boxdot\phi$. If not, by induction hypothesis, there exist $t,u\in S^c$ such that $sR^c_1t$ and $sR^c_2u$ and it is {\em not} the case that ($\phi\in t$ iff $\phi\in u$). W.l.o.g. we may assume\footnote{This is because $R^c_1=R^c_2$.} that $\phi\in t$ but $\phi\notin u$. From $sR^c_1t$, it follows that there exists $\chi$ such that $\neg\boxdot\chi\in s$ and (1) for all $\phi$, if $\boxdot\phi\land\boxdot(\chi\to\phi)\in s$, then $\phi\in t$. Since $\neg\phi\notin t$ and $\boxdot\neg\phi\in s$, by (1) we have $\boxdot(\chi\to\neg\phi)\notin s$, namely $\boxdot(\neg\phi\vee\neg\chi)\notin s$.  Similarly, from $sR^c_2u$ and $\phi\notin u$, we can show that for some $\psi$, $\boxdot(\psi\to\phi)\notin s$, that is, $\boxdot(\phi\vee\neg\psi)\notin s$. Now by axiom $\boxdot\text{DIS}$, we obtain that $\boxdot\phi\notin s$, which is contrary to the supposition.

Conversely, assume that $\boxdot\phi\notin s$, we need to find two states $t,u\in S^c$ such that $sR^c_1t$ and $sR^c_2u$ and it is {\em not} the case that ($\phi\in t$ iff $\phi\in u$). Define $\Gamma(s)$ as in Prop.~\ref{prop.boxdot-property}. By Prop.~\ref{prop.boxdot-property}.\ref{boxdot4}, $\Gamma(s)\cup\{\phi\}$ and $\Gamma(s)\cup\{\neg\phi\}$ are both consistent. Then by Lindenbaum's Lemma, there are two states $t,u\in S^c$ such that $sR^c_1t$ and $sR^c_2u$ such that $\phi\in t$ and $\phi\notin u$, and thus it is {\em not} the case that ($\phi\in t$ iff $\phi\in u$), as desired.
\end{proof}

The strong completeness is now a standard exercise.
\begin{theorem}\label{thm.k-completeness}
${\bf K^{\boxdot}}$ is sound and strongly complete with respect to the class of all bimodal frames.
\end{theorem}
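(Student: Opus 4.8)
The plan is to obtain the theorem by combining the soundness proposition already established with the canonical model of Definition~\ref{def.cm-boxdot} and the Truth Lemma (Lemma~\ref{lem.truth-lemma-k}). Soundness is immediate: every axiom of ${\bf K^{\boxdot}}$ is valid on all bimodal frames and every rule preserves validity, so $\Gamma\vdash_{{\bf K^{\boxdot}}}\phi$ implies $\Gamma\vDash\phi$ for every set $\Gamma$ and every formula $\phi$.

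For strong completeness it suffices to prove that every ${\bf K^{\boxdot}}$-consistent set of $\mathcal{L}(\boxdot)$-formulas is satisfiable in a bimodal model. First I would fix a ${\bf K^{\boxdot}}$-consistent set $\Gamma$ and apply Lindenbaum's Lemma to extend it to a maximal ${\bf K^{\boxdot}}$-consistent set $s$, so $s\in S^c$; note that $S^c$ is nonempty because ${\bf K^{\boxdot}}$ is consistent. Next I would observe that the canonical model $\M^c=\lr{S^c,R^c_1,R^c_2,V^c}$ is a genuine bimodal model (its components satisfy the defining conditions of a bimodal model, with $R^c_1$ and $R^c_2$ arbitrary binary relations on $S^c$ — here, in fact, $R^c_1=R^c_2$, as remarked after Definition~\ref{def.cm-boxdot}). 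Finally, by the Truth Lemma, $\M^c,s\vDash\psi$ for all $\psi\in s$, hence for all $\psi\in\Gamma$, so $\Gamma$ is satisfiable. Standardly, this yields strong completeness: if $\Gamma\nvdash_{{\bf K^{\boxdot}}}\phi$ then $\Gamma\cup\{\neg\phi\}$ is ${\bf K^{\boxdot}}$-consistent, hence satisfiable at some point of some bimodal model, which witnesses $\Gamma\nvDash\phi$.

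The main obstacle in this line of argument has in fact already been overcome, in the proof of the Truth Lemma: showing that the canonical relations defined via a formula $\chi$ with $\neg\boxdot\chi\in s$ correctly ``decode'' the operator $\boxdot$ — the left-to-right direction using axiom $\boxdot\text{DIS}$, and the right-to-left direction using Proposition~\ref{prop.boxdot-property}.\ref{boxdot4} together with Lindenbaum's Lemma — and in particular the fact that $R^c_1=R^c_2$, which makes the Forth/Back reasoning symmetric and avoids any structural assumption on the frames. Given all this, the proof of the theorem itself is a routine assembly; the only point worth a second glance is that no seriality or similar condition is ever invoked, so the completeness is indeed with respect to the class of \emph{all} bimodal frames.
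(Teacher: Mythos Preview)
Your proposal is correct and matches the paper's approach exactly: the paper itself treats this theorem as ``a standard exercise'' once soundness and the Truth Lemma (Lemma~\ref{lem.truth-lemma-k}) are in place, which is precisely the assembly you describe. Your additional remarks about $R^c_1=R^c_2$ and the absence of any frame condition are accurate and consistent with the paper's development.
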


\subsection{Extensions}\label{sec.extensions}

In this section, we study the axiomatizations of $\mathcal{L}(\boxdot)$ over special frames. The following table lists extra axioms and proof systems, and the frame properties that the corresponding systems characterize.
\[
\begin{array}{|llll|}
\hline
\text{Notation}&\text{Axioms}&\text{Systems}&\text{Properties}\\
\hline\hline
&&{\bf K^\boxdot}&\text{seriality}\\
\hline
\boxdot\text{T}&\phi\to[\boxdot\phi\to(\boxdot(\phi\to\psi)\to\boxdot\psi)]&{\bf T^\boxdot}={\bf K^\boxdot}+\boxdot\text{T}&\text{reflexivity}\\
\boxdot\text{B}&\phi\to\boxdot((\boxdot\phi\land\boxdot(\phi\to\psi)\land\neg\boxdot\psi)\to\chi)&{\bf B^\boxdot}={\bf K^\boxdot}+\boxdot\text{B}&\text{symmetry}\\
\boxdot4&\boxdot\phi\to\boxdot(\boxdot\phi\vee\psi)&{\bf K4^\boxdot}={\bf K^\boxdot}+\boxdot4&qt\& pt\\
\boxdot5&\neg\boxdot\phi\to\boxdot(\neg\boxdot\phi\vee\psi)&{\bf K5^\boxdot}={\bf K^\boxdot}+\boxdot5&qe\& pe\\
\hline
\end{array}
\]
In the above table, $qt,pt,qe,pe$ abbreviate quasi-transitivity, pseudo-transitivity, quasi-Euclidicity and pseudo-Euclidicity, respective, which are formalized by $\forall xyz (xR_iy\land yR_jz\to xR_jz)$, $\forall xyz (xR_iy\land yR_jz\to xR_1z\land xR_2z)$, $\forall xyz(xR_iy\land xR_jz\to yR_jz)$, and $\forall xyz(xR_iy\land xR_jz\to yR_1z\land yR_2z)$, respectively, where $i,j\in\{1,2\}$.

\subsubsection{Serial logic}

Thm.~\ref{thm.k-completeness} shows that ${\bf K^{\boxdot}}$ is the minimal $\boxdot$-logic. We now demonstrate that the same system is also the serial $\boxdot$-logic, that is, ${\bf K^{\boxdot}}$ is also sound and strongly complete with respect to the class of serial bimodal frames. For this, we only need to show that $R^c_1$ and $R^c_2$ are both serial, which though cannot be guaranteed due to the possibility that all formulas of the form $\boxdot\phi$ belongs to some state. Due to the fact that $R^c_1=R^c_2$, we call the points that have neither $R^c_1$- nor $R^c_2$-successors `$R^c$-dead points'.\footnote{Notice that as $R^c_1=R^c_2$, for all $s\in S^c$, $s$ either has both $R^c_1$- and $R^c_2$-successors, or has neither of them.} We handle these points by using a similar strategy to the completeness proof of ${\bf K^{\boxplus}}$ over serial frames (see the remarks before Thm.~\ref{thm.boxplus-serial}). In detail, define $\M^{\bf D}=\lr{S^c,R^{\bf D}_1,R^{\bf D}_2,V^c}$ as $\M^c$ in Def.~\ref{def.cm-boxdot}, except that $R^{\bf D}_i=R^c_i\cup\{(s,s)\mid s\text{ is a }R^c\text{-dead points}\}$. It should be obvious that $\M^{\bf D}$ is serial. Moreover, the truth values of $\mathcal{L}(\boxdot)$-formulas are invariant under the model transformation: for all $s\in S^c$, if $s$ has both $R^c_1$- and $R^c_2$-successors, then it is clear that $\M^c,s\vDash\boxdot\phi$ iff $\M^{\bf D},s\vDash\boxdot\phi$; if $s$ is a $R^c$-dead point, then $\M^c,s\vDash\boxdot\phi$ and $\M^{\bf D},s\vDash\boxdot\phi$, as desired.

The above strategy indicates that $\M^c$ can be transformed into an equivalent serial bimodal model. In the sequel, we will show a stronger result: every bimodal model can be transformed into an equivalent serial bimodal model; more precisely, each bimodal model is a $\boxdot$-morphic image of some serial bimodal model.

\weg{

It may be natural to ask if the strategy of reflexivizing the arrows in~\cite{Humberstone95,Fanetal:2015} works here; in detail, . The answer is negative, since the transformation may change the truth values of formulas. To see this, consider the case that in $\M^c$, $s$ has no $R^c_2$-successors and has a single $R^c_1$-successor $t$, where $p$ is true at $s$ but false at $t$. One may easily see that in this model $\boxdot p$ is true at $s$ in $\M^c$. If we reflexivize the $R^c_2$-arrow of $s$, then $\boxdot p$ would be false at $s$ in the new model. We can fix this problem by adding only the $R^c_2$ arrow from $s$ to $t$. If $s$ has more than one $R^c_1$-successors which have the same truth value for any $\mathcal{L}(\boxdot)$ formula, then we can add at least one $R^c_2$-arrows from $s$ to its $R^c_1$-successors.


So far so good. However, what if $s$ has more than one $R^c_1$-successors that does not agree on $\mathcal{L}(\boxdot)$-formulas? In that case, if we add at least one $R_2^c$ arrows from $s$ to its $R^c_1$-successors, then the truth values of formulas may change again, since in the original model, all formulas of the form $\boxdot\phi$ are true at $s$, whereas in the new model some $\boxdot\psi$ becomes false at $s$ (see Prop.~\ref{prop.boxdotto}). In general, if $s$ has no $R^c_i$ but has more than one $R^c_j$-successors that does not agree on $\mathcal{L}(\boxdot)$-formulas (where $i,j\in\{1,2\}$ and $i\neq j$), adding at least one $R^c_i$ arrows from $s$ to its $R^c_j$-successors may change the truth values of $\mathcal{L}(\boxdot)$-formulas.

In order to fix the above problems, we adopt the following strategy: first split $s$ into its copies, such that each copy has only one $R^c_j$-successor, then add the $R_i^c$ arrow from the copy to the sole $R^c_j$-successor. In doing so, we obtain a serial model which keep the truth values of formulas the same. {\bf Note that $R^c_1=R^c_2$!}

Let us sum up our strategy as follows. To turn $\M^c$ into a (point-equivalent) serial model, we handle the dead ends $s$ based on the following three key observations.
\begin{enumerate}
\item $s$ has neither $R^c_1$-successors nor $R^c_2$-successors. We just add the $R^c_1$ and $R^c_2$ arrows from $s$ to itself.
\item\label{case2} $s$ has only one $R^c_j$-successor but has no $R^c_i$-successors, where $i,j\in\{1,2\}$ and $i\neq j$. In this case, we just add the $R^c_i$-arrow from $s$ to its sole $R^c_j$-successor.
\item $s$ has more than one $R^c_j$-successor but has no $R^c_i$-successors, where $i,j\in\{1,2\}$ and $i\neq j$. In this case, we first replace $s$ with its new copies, such that each copy has only one $R^c_j$-successor, and then continue as the case~\ref{case2}.
\end{enumerate}

The problem becomes even more complicated, since the successors of $s$ may in turn have similar possibilities. For example, if $s$ has both $R_1$-successors $t$ and $R_2$-successors $u$ in the original model, we cannot just remain the same points $t,u$ when they are as the above second or third case.}

Given a bimodal model $\M=\lr{S,R_1,R_2,V}$, each world $s$ in $S$ has four possibilities: $s$ has neither $R_1$-successors nor $R_2$-successors, $s$ has $R_1$-successors but has no $R_2$-successors, $s$ has no $R_1$-successors but has $R_2$-successors, $s$ has both $R_1$-successors and $R_2$-successors. We handle this four different kinds of worlds in different ways, based on the following key observations.
\begin{enumerate}
\item $s$ has neither $R_1$-successors nor $R_2$-successors. In this case, we just add the $R_1$ and $R_2$ arrows from $s$ to itself.
\item $s$ has $R_1$-successors but has no $R_2$-successors. In this case, we first replace $s$ with some of its new copies, such that each copy has only one $R_1$-successor, then add the $R_2$-arrow from each copy to its sole $R_1$-successor.
\item $s$ has no $R_1$-successors but has $R_2$-successors. The method for dealing with this case is similar to that for the second case. We first replace $s$ with some of its new copies, such that each copy has only one $R_2$-successor, then add the $R_1$-arrow from each copy to its sole $R_2$-successor.
\item $s$ has both $R_1$-successors (say $t$) and $R_2$-successors (say $u$). In this case, if for instance, $t$ lies in the first case or the current case, we just keep the point $t$ and the arrow from $s$ to $t$. However, if $t$ lies in other two cases, then we cannot simply do the same thing (otherwise the truth values of formulas may change during the tranformation); instead, we need to replace $t$ with some of its new copies and deal with $t$ in the same way as in the second and third cases.
\end{enumerate}





Let $\M=\lr{S,R_1,R_2,V}$. Define $E_1=\{s\in S\mid sR_1t\text{ for some }t\in S\}$ and $E_2=\{s\in S\mid sR_2t\text{ for some }t\in S\}$, and let $\overline{E_1}=S\backslash E_1$ and $\overline{E_2}=S\backslash E_2$.

It is not hard to see that $S$ can be partitioned into four areas: $\overline{E_1}\cap \overline{E_2}$, $E_1\cap\overline{E_2}$, $\overline{E_1}\cap E_2$ and $E_1\cap E_2$.

\begin{definition}\label{def.cm-d}
Given any bimodal model $\M=\lr{S,R_1,R_2,V}$, we construct a bimodal model $\M'=\lr{S',R'_1,R'_2,V'}$, where
\begin{itemize}
\item $S'=(\overline{E_1}\cap \overline{E_2})\cup(E_1\cap E_2)\cup\{(s,t,1)\mid s\in E_1\cap\overline{E_2},sR_1t\}\cup\{(s,t,2)\mid s\in\overline{E_1}\cap E_2,sR_2t\}$
\item $sR'_1t$ iff one of the following conditions holds:
      \begin{enumerate}
      \item $s\in \overline{E_1}\cap \overline{E_2}$ and $s=t$
      \item $s\in E_1\cap E_2$ and $sR_1t$ and $t\in(\overline{E_1}\cap\overline{E_2})\cup(E_1\cap E_2)$
      \item $s\in E_1\cap E_2$ and $t=(t',u,i)\in S'$ and $sR_1t'$, where $i\in\{1,2\}$
      \item $s=(s',t,i)\in S'$ and $t\in(\overline{E_1}\cap\overline{E_2})\cup(E_1\cap E_2)$, where $i\in\{1,2\}$
      \item $s=(s',t',i)\in S'$ and $t=(t',u',j)\in S'$, where $i,j\in\{1,2\}$
      \end{enumerate}
\weg{\item $sR'_1t$ iff one of the following conditions holds:
      \begin{enumerate}
      \item $s\in \overline{E_1}\cap \overline{E_2}$ and $s=t$
      \item $s\in E_1\cap E_2$ and $sR^c_1t$ and $t\in(\overline{E_1}\cap\overline{E_2})\cup(E_1\cap E_2)$
      \item $s\in E_1\cap E_2$ and $t=(t',u,1)\in S'$ and $sR^c_1t'$
      \item $s\in E_1\cap E_2$ and $t=(t',u,2)\in S'$ and $sR^c_1t'$
      \item $s=(s',t,1)\in S'$ and $t\in(\overline{E_1}\cap\overline{E_2})\cup(E_1\cap E_2)$
      \item $s=(s',t',1)\in S'$ and $t=(t',u',1)\in S'$
      \item $s=(s',t',1)\in S'$ and $t=(t',u',2)\in S'$
      \item $s=(s',t,2)\in S'$ and $t\in(\overline{E_1}\cap\overline{E_2})\cup(E_1\cap E_2)$
      \item $s=(s',t',2)\in S'$ and $t=(t',u',1)\in S'$
      \item $s=(s',t',2)\in S'$ and $t=(t',u',2)\in S'$
      \end{enumerate}}
\item $sR'_2t$ iff one of the following holds:
      \begin{enumerate}
      \item $s\in \overline{E_1}\cap \overline{E_2}$ and $s=t$
      \item $s\in E_1\cap E_2$ and $sR_2t$ and $t\in(\overline{E_1}\cap\overline{E_2})\cup(E_1\cap E_2)$
      \item $s\in E_1\cap E_2$ and $t=(t',u,i)\in S'$ and $sR_2t'$, where $i\in\{1,2\}$
      \item $s=(s'',t,i)\in S'$ and $t\in(\overline{E_1}\cap\overline{E_2})\cup(E_1\cap E_2)$, where $i\in\{1,2\}$
      \item $s=(s'',t'',i)\in S'$ and $t=(t'',u'',j)\in S'$, where $i,j\in\{1,2\}$
      \end{enumerate}
\weg{\item $sR'_2t$ iff one of the following holds:
      \begin{enumerate}
      \item $s\in \overline{E_1}\cap \overline{E_2}$ and $s=t$
      \item $s\in E_1\cap E_2$ and $sR^c_2t$ and $t\in(\overline{E_1}\cap\overline{E_2})\cup(E_1\cap E_2)$
      \item $s\in E_1\cap E_2$ and $t=(t',u,1)\in S'$ and $sR^c_2t'$
      \item $s\in E_1\cap E_2$ and $t=(t',u,2)\in S'$ and $sR^c_2t'$
      \item $s=(s'',t,1)\in S'$ and $t\in(\overline{E_1}\cap\overline{E_2})\cup(E_1\cap E_2)$
      \item $s=(s'',t'',1)\in S'$ and $t=(t'',u'',1)\in S'$
      \item $s=(s'',t'',1)\in S'$ and $t=(t'',u'',2)\in S'$
      \item $s=(s'',t,2)\in S'$ and $t\in(\overline{E_1}\cap\overline{E_2})\cup(E_1\cap E_2)$
      \item $s=(s'',t'',2)\in S'$ and $t=(t'',u'',1)\in S'$
      \item $s=(s'',t'',2)\in S'$ and $t=(t'',u'',2)\in S'$
      \end{enumerate}}
\item $V'(p)=\{s\in S'\mid g(s)\in V(p)\}$, where $g$ is a function from $S'$ to $S$ such that $g(s)=s$ for $s\in(\overline{E_1}\cap \overline{E_2})\cup(E_1\cap E_2)$, and $g((s,t,i))=s$ for $(s,t,i)\in S'$ where $i\in\{1,2\}$.

\end{itemize}
\end{definition}

\weg{\begin{definition}
$\M'=\lr{S',R'_1,R'_2,V'}$ is the canonical model, if
\begin{itemize}
\item $S'=\overline{E_1}\cup\overline{E_2}\cup\{(s,t,2)\mid s\in E_1, sR^c_2t\}\cup\{(s,t,1)\mid s\in E_2,sR^c_1t\}$
\item $sR'_1t$ iff one of the following holds:
      \begin{enumerate}
      \item $sR^c_1t$ and $s\in\overline{E_2}$
      \item $s=(s',t,1)\in S'$
      \item $s=(s',t,2)\in S'$
      \end{enumerate}
\item $sR'_2t$ iff one of the following holds:
      \begin{enumerate}
      \item $sR^c_2t$ and $s\in\overline{E_1}$
      \item $s=(s',t,2)\in S'$
      \item $s=(s',t,1)\in S'$
      \end{enumerate}
\end{itemize}
\end{definition}}

It would be constructive to give a concrete example. We choose the following example to cover all conditions in the definitions of the relations $R_1'$ and $R_2'$ (for the sake of simplicity, we leave out the valuations).
\begin{example}
\weg{$$
\xymatrix{&&&s\ar[dll]_1\ar[drr]^2&&&\\
&t\ar[dl]_1\ar[dr]^1&&&&u\ar[dl]_2\ar[dr]^2&\\
t_1&&t_2&&u_1&&u_2\\}
$$
\small{$$
\xymatrix{&&s\ar[dl]_1\ar[dr]^2&&&&&&s\ar[dl]_1\ar[dll]_1\ar[dr]^2\ar[drr]^2&&\\
&t\ar[dl]_1\ar[d]^1&&u\ar[d]_2\ar[dr]^2&&\Longrightarrow&(t,t_1,1)\ar[d]_{1,2}\ar@/^10pt/[dr]_{1,2}&(t,t_2,1)\ar[d]^{1,2}\ar@/^10pt/[dl]^{1,2}&&(u,u_1,2)\ar[d]_{1,2}\ar@/^10pt/[dr]_{1,2}&(u,u_2,2)\ar[d]^{1,2}\ar@/^10pt/[dl]^{1,2}\\
t_1&t_2&&u_1&u_2&&t_1&t_2&&u_1&u_2\\}
$$}
$$
\xymatrix{&r\ar[d]_1\ar[ddr]^2\ar[dr]^2&&&&&r\ar[d]_1\ar[ddr]^2\ar[ddrr]^2\ar[drr]^{2}&&\\
&s\ar[dl]_1\ar[d]_1\ar[dr]_2&w\ar[d]^1&&&&s\ar[dl]_1\ar[d]_1\ar[dr]_2\ar[drr]^2&&(w,u,1)\ar[d]^{1,2}\ar@/_10pt/[dl]_{1,2}\\
v\ar[r]_2&t\ar[d]_1&u\ar[d]^2\ar[dr]^2&&\Longrightarrow&(v,t,2)\ar[r]_{1,2}&(t,t_1,1)\ar[d]_{1,2}&(u,u_1,2)\ar[d]_{1,2}\ar@/^10pt/[dr]_{1,2}&(u,u_2,2)\ar[d]^{1,2}\ar@/^10pt/[dl]^{1,2}\\
&t_1&u_1&u_2&&&t_1\ar@(dr,dl)^{1,2}&u_1\ar@(dl,dr)_{1,2}&u_2\ar@(dl,dr)_{1,2}\\}
$$}
\[
\xymatrix{&&u\ar[d]^2&&&&&(u,v,2)\ar[d]^{1,2}&\\
t&s\ar[l]_{1,2}\ar[ur]^1\ar[r]^2&v\ar[r]^1&w&\Longrightarrow&t\ar@(ul,ur)^{1,2}&s\ar[l]_{1,2}\ar[ur]^1\ar[r]^2&(v,w,1)\ar[r]^{1,2}&w\ar@(ur,ul)_{1,2}\\}
\]
In the left-hand model $\M$, it is not hard to see that $s\in E_1\cap E_2$, $u\in \overline{E_1}\cap E_2$, $v\in E_1\cap\overline{E_2}$, and $t,w\in \overline{E_1}\cap \overline{E_2}$. Thus in the right-hand model $\M'$, $s,t,w$ are kept unchanged, whereas $u$ and $v$ are replaced by their new copies $(u,v,2)$ (since $uR_2v$), $(v,w,1)$ (since $vR_1w$), respectively.

Now for the arrows in $\M'$, viz. accessibility relations. The $1$- and $2$-arrows from $t$ to itself and from $w$ to itself are obtained from the first conditions of (the definitions of) $R_1'$ and $R_2'$. The $1$- and $2$-arrows from $s$ to $t$ follow from the second conditions of $R_1'$ and $R_2'$. The $1$-arrow from $s$ to $(u,v,2)$ is derived from the third condition of $R_1'$. The $2$-arrow from $s$ to $(v,w,1)$ is deduced from the third condition of $R_2'$. The $1$- and $2$-arrows from $(v,w,1)$ to $w$ are inferred due to the fourth conditions of $R_1'$ and $R_2'$. The $1$- and $2$-arrows from $(u,v,2)$ to $(v,w,1)$ are concluded by the fifth conditions of $R_1'$ and $R_2'$. In this way, we transform the non-serial model $\M$ into the desired serial model $\M'$.
\weg{In the left-hand model $\M$, it is not hard to see that $r,s\in E_1\cap E_2$, $w,t\in E_1\cap \overline{E_2}$, $v,u\in \overline{E_1}\cap E_2$, $t_1,u_1,u_2\in\overline{E_1}\cap \overline{E_2}$. Thus in the right-hand model $\M'$, $r,s,t_1,u_2,u_2$ are kept unchanged, whereas $w$, $v$, $t$, $u$ are replaced by their copies $(w,u,1)$ (since $wR_1u$), $(v,t,2)$ (since $vR_2t$), $(t,t_1,1)$ (since $tR_1t_1$) and $(u,u_1,2)$ (since $uR_2u_1$), $(u,u_2,2)$ (since $uR_2u_2$), respectively.

Now for the arrows in $\M'$, viz. accessibility relations. The $1$-arrow from $r$ to $s$ follows from the second condition of $R_1'$; the $2$-arrows from $r$ to $(u,u_1,2)$ and $(u,u_2,2)$ and $(w,u,1)$ are obtained from the third condition of $R_2'$, and so are the $2$-arrows from $s$ to both $(u,u_1,2)$ and $(u,u_2,2)$; the $1$-arrows from $s$ to both $(v,t,2)$ and $(t,t_1,1)$ are derived from the third conditions of $R_1'$; the $1$-arrow and $2$-arrow from $(w,u,1)$ to both $(u,u_1,2)$ and $(u,u_2,2)$ obtain via the fifth conditions of $R_1'$ and $R_2'$, and so do the $1$-arrow and $2$-arrow from $(v,t,2)$ to $(t,t_1,1)$; the $1$-arrow and $2$-arrow from $(t,t_1,1)$ to $t_1$ are inferred due to the fourth conditions of $R_1'$ and $R_2'$, and so are the $1$-arrows and $2$-arrows from both $(u,u_1,2)$ and $(u,u_2,2)$ to both $u_1$ and $u_2$; the $1$-arrow and $2$-arrow from $t_1$ to itself are concluded because of the first conditions of $R_1'$ and $R_2'$.}
\end{example}

The following proposition states that $\M'$ constructed via Def.~\ref{def.cm-d} is indeed serial.

\begin{proposition}\label{prop.serial}
$\M'$ is serial.
\end{proposition}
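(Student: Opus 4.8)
The plan is to verify seriality directly from the definition: I would show that every $x\in S'$ has at least one $R'_1$-successor and at least one $R'_2$-successor, proceeding by a case analysis on which of the three kinds of elements of $S'$ the point $x$ is — an old dead point in $\overline{E_1}\cap\overline{E_2}$, an old point in $E_1\cap E_2$, or a copy $(s,t,i)$.

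If $x\in\overline{E_1}\cap\overline{E_2}$, then the first clauses in the definitions of $R'_1$ and of $R'_2$ give $xR'_1x$ and $xR'_2x$, so $x$ has successors of both kinds. If $x\in E_1\cap E_2$, pick (in $\M$) a $t$ with $xR_1t$; such a $t$ exists since $x\in E_1$. Now $t$ lies in exactly one of the four regions. If $t\in(\overline{E_1}\cap\overline{E_2})\cup(E_1\cap E_2)$, then $t\in S'$ and the second clause of $R'_1$ yields $xR'_1t$. If $t\in E_1\cap\overline{E_2}$, then by definition of $E_1$ there is a $b$ with $tR_1b$, so $(t,b,1)\in S'$, and the third clause of $R'_1$ (with $t'=t$, $u=b$, $i=1$) gives $xR'_1(t,b,1)$; symmetrically, if $t\in\overline{E_1}\cap E_2$ there is a $b$ with $tR_2b$, so $(t,b,2)\in S'$ and the third clause gives $xR'_1(t,b,2)$. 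In every case $x$ has an $R'_1$-successor in $S'$; using instead an $R_2$-successor of $x$ in $\M$ together with the matching clauses for $R'_2$ gives an $R'_2$-successor.

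Finally, suppose $x=(s,t,1)\in S'$, so $s\in E_1\cap\overline{E_2}$ and $sR_1t$ (the case $x=(s,t,2)$ is entirely symmetric, swapping the roles of $R_1$ and $R_2$). Again $t$ lies in exactly one region. If $t\in(\overline{E_1}\cap\overline{E_2})\cup(E_1\cap E_2)$, then $t\in S'$ and the fourth clauses of $R'_1$ and of $R'_2$ give $xR'_1t$ and $xR'_2t$. If $t\in E_1\cap\overline{E_2}$ (resp.\ $t\in\overline{E_1}\cap E_2$), then $t$ has an $R_1$-successor $b$ (resp.\ an $R_2$-successor $b$), so the copy $(t,b,1)\in S'$ (resp.\ $(t,b,2)\in S'$), and the fifth clauses of $R'_1$ and $R'_2$ give $x R'_1 (t,b,i)$ and $x R'_2 (t,b,i)$ for the appropriate $i$. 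Hence $x$ has both an $R'_1$- and an $R'_2$-successor, which completes the case analysis.

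The argument involves no real difficulty; the only point requiring care is the bookkeeping across the five clauses of each relation, and in particular the observation — used repeatedly above — that whenever the target $t$ falls into $E_1\cap\overline{E_2}$ or $\overline{E_1}\cap E_2$, the very membership of $t$ in $E_1$ or $E_2$ guarantees that the corresponding copy $(t,b,i)$ was actually placed into $S'$, so the promised successor really exists. I expect that to be the main (still routine) obstacle: keeping straight which clause is invoked in which region and checking that the witnessing element of $S'$ is well-defined.
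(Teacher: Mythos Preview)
Your proposal is correct and follows essentially the same approach as the paper: a case analysis on the three kinds of elements of $S'$, with a nested case split on the region containing the target $t$, invoking exactly the same clauses of $R'_1$ and $R'_2$ in each subcase. The only cosmetic difference is that the paper treats the copy case $(s',t,i)$ uniformly for $i\in\{1,2\}$ rather than handling $i=1$ and appealing to symmetry.
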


\begin{proof}
Let $s\in S'$ be arbitrary. We need to show that there are $x,y\in S'$ such that $sR'_1x$ and $sR'_2y$.

According to the definition of $S'$, we distinguish the following cases.
\begin{enumerate}
\item $s\in \overline{E_1}\cap\overline{E_2}$. Then by the first conditions of the definitions of $R_1'$ and $R'_2$, $s$ is the desired $x$ and $y$.
\item $s\in E_1\cap E_2$. Then $sR_1t$ for some $t\in S$. We consider all possibilities of $t$ as follows.
      \begin{enumerate}
      \item $t\in (\overline{E_1}\cap \overline{E_2})\cup(E_1\cap E_2)$. According to the second condition of the definition of $R'_1$, we have $sR'_1t$, and thus $t$ is the desired $x$.
      \item $t\in (E_1\cap \overline{E_2})\cup(\overline{E_1}\cap E_2)$. Then $tR_iu$ for some $u\in S^c$, where the value of $i$ depends on $t$: if $t\in E_1\cap \overline{E_2}$, then $i=1$; otherwise $i=2$. Then $(t,u,i)\in S'$. According to the third condition of the definition of $R'_1$, we infer $sR'_1(t,u,i)$, thus $(t,u,i)$ is the desired $x$.
      \end{enumerate}
      We have also $sR_2u$ for some $u\in S$. With a similar argument, we can obtain $sR'_2y$ for some $y\in S'$.
\item $s=(s',t,i)\in S'$ where $i\in\{1,2\}$. Then $s'\in E_i\cap \overline{E_j}$ and $s'R_it$, where $j\in\{1,2\}$ and $j\neq i$. Again, since $t\in S$, we consider all possibilities of $t$ as follows.
    \begin{enumerate}
    \item $t\in (\overline{E_1}\cap \overline{E_2})\cup(E_1\cap E_2)$. According to the fourth conditions of the definitions of $R'_1$ and $R'_2$, we get $sR'_1t$ and $sR'_2t$, and thus $t$ is the desired $x$ and $y$.
    \item $t\in (E_1\cap \overline{E_2})\cup(\overline{E_1}\cap E_2)$. Then $tR_ku$ for some $u\in S$, where the value of $k$ depends on $t$: if $t\in E_1\cap \overline{E_2}$, then $k=1$; otherwise $k=2$. Then $(t,u,k)\in S'$. According to the fifth conditions of the definitions of $R'_1$ and $R'_2$, we have $sR'_1(t,u,k)$ and also $sR'_2(t,u,k)$, and thus $(t,u,k)$ is the desired $x$ and $y$.
    \end{enumerate}
\weg{\item $s=(s',t,1)\in S'$. Then $s'\in E_1\cap \overline{E_2}$ and $s'R^c_1t$. Again, since $t\in S^c$, we consider all possibilities of $t$ as follows.
    \begin{enumerate}
    \item $t\in (\overline{E_1}\cap \overline{E_2})\cup(E_1\cap E_2)$. According to the fourth conditions of the definitions of $R'_1$ and $R'_2$, we get $sR'_1t$ and $sR'_2t$, and thus $t$ is the desired $x$ and $y$.
    \item $t\in (E_1\cap \overline{E_2})\cup(\overline{E_1}\cap E_2)$. Then $tR^c_iu$ for some $u\in S^c$, where the value of $i$ depends on $t$: if $t\in E_1\cap \overline{E_2}$, then $i=1$; otherwise $i=2$. Then $(t,u,i)\in S'$. According to the fifth conditions of the definitions of $R'_1$ and $R'_2$, we have $sR'_1(t,u,i)$ and also $sR'_2(t,u,i)$, and thus $(t,u,i)$ is the desired $x$ and $y$.
    \end{enumerate}}
\weg{\item $s=(s',t,2)\in S'$. Then $s'\in \overline{E_1}\cap E_2$ and $s'R^c_2t$. Again, since $t\in S^c$, we consider all possibilities of $t$ as follows.
    \begin{enumerate}
    \item $t\in (\overline{E_1}\cap \overline{E_2})\cup(E_1\cap E_2)$. According to the eighth condition of the definition of $R'_1$, we get $sR'_1t$ and thus $t$ is the desired $x$.
    \item $t\in E_1\cap \overline{E_2}$. Then $tR^c_1u$ for some $u\in S^c$. Then $(t,u,1)\in S'$. According to the ninth condition of the definition of $R'_1$, we have $sR'_1(t,u,1)$, and thus $(t,u,1)$ is the desired $x$.
    \item $t\in \overline{E_1}\cap E_2$. Then $tR^c_2u$ for some $u\in S^c$. Then $(t,u,2)\in S'$. According to the tenth condition of the definition of $R'_1$, we have $sR'_1(t,u,2)$, and thus $(t,u,2)$ is the desired $x$.
    \end{enumerate}}
\end{enumerate}
We have thus shown that in all cases, there always exist $x,y\in S'$ such that $sR'_1x$ and $sR'_2y$, as desired.
\end{proof}

The proposition below indicates that $g$ satisfies the condition (Forth) of a $\boxdot$-morphism.

\begin{proposition}\label{prop.forth} If $sR'_1t$ and $sR'_2u$ and $g(t)\neq g(u)$, then $g(s)R_1g(t)$ and $g(s)R_2g(u)$.
\end{proposition}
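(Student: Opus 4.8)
The plan is to argue by a case analysis on which constituent of the union defining $S'$ in Def.~\ref{def.cm-d} contains $s$: either $s\in\overline{E_1}\cap\overline{E_2}$, or $s\in E_1\cap E_2$, or $s$ is a triple $(s',w,i)$ (the two sorts $i=1,2$ being treated alike). Within each case I would inspect which clause of the definitions of $R'_1$ and of $R'_2$ can possibly witness $sR'_1t$ and $sR'_2u$, then compute $g$ on the relevant points. Since these constituents are pairwise disjoint (triples are fresh elements not lying in $S$), exactly one case applies to any given $s$.

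First I would dispose of $s\in\overline{E_1}\cap\overline{E_2}$: the only clause applicable for both $R'_1$ and $R'_2$ is the first one, so $sR'_1t$ forces $t=s$ and $sR'_2u$ forces $u=s$, whence $g(t)=g(s)=g(u)$, contradicting $g(t)\neq g(u)$; this case is vacuous. Next, when $s=(s',w,i)$ is a triple, only the fourth and fifth clauses can witness $sR'_1t$: the fourth gives $t=w$ with $w\in(\overline{E_1}\cap\overline{E_2})\cup(E_1\cap E_2)$, so $g(t)=w$, and the fifth gives $t=(w,u',j)$, so again $g(t)=w$. The symmetric reading of the fourth and fifth clauses of $R'_2$ yields $g(u)=w$ as well, so once more $g(t)=g(u)$ and the hypothesis is never met. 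Hence in both of these cases there is nothing to prove.

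The only case with genuine content is $s\in E_1\cap E_2$, where $g(s)=s$. Here $sR'_1t$ can hold only by the second or third clause: by the second, $sR_1t$ and $t\in(\overline{E_1}\cap\overline{E_2})\cup(E_1\cap E_2)$, so $g(t)=t$ and thus $g(s)R_1g(t)$; by the third, $t=(t',v,k)$ with $sR_1t'$, so $g(t)=t'$ and again $g(s)R_1g(t)$. Applying the same reasoning to $sR'_2u$ via the second and third clauses of $R'_2$ gives $g(s)R_2g(u)$, completing this case. Note that here the hypothesis $g(t)\neq g(u)$ is not needed at all; it serves only to make the other two cases vacuous.

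The argument is essentially bookkeeping over the clauses of Def.~\ref{def.cm-d}, so I do not expect a substantial obstacle. The one point I would flag as needing care is the observation that when $s$ is a triple (or lies in $\overline{E_1}\cap\overline{E_2}$), every $R'_1$- and every $R'_2$-successor of $s$ has the same $g$-image, so that the premise $g(t)\neq g(u)$ silently excludes those cases; once this is seen, the remaining case $s\in E_1\cap E_2$ follows immediately by reading off the relevant clauses.
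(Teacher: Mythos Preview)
Your proposal is correct and follows essentially the same case analysis on the form of $s$ as the paper's own proof. Your presentation is in fact slightly more economical: where the paper treats the case $s=(x,y,i)$ via four subcases, each ending in a separate contradiction, you observe uniformly that every $R'_1$- and $R'_2$-successor of such an $s$ has $g$-image equal to the second component of $s$, so the hypothesis $g(t)\neq g(u)$ is never met; and in the case $s\in E_1\cap E_2$ you handle $R'_1$ and $R'_2$ independently rather than as a $2\times 2$ grid of subcases.
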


\begin{proof}
Suppose that $sR'_1t$ and $sR'_2u$ and $g(t)\neq g(u)$, thus $t\neq u$. Since $s\in S'$, we consider the following cases.
\begin{enumerate}
\item $s\in\overline{E_1}\cap \overline{E_2}$. According to the first condition of the definition of $R'_1$ and $R'_2$, we would have $s=t$ and $s=u$, which implies that $t=u$. Contradiction.
\item $s\in E_1\cap E_2$. Then $g(s)=s$. Since $sR'_1t$ and $sR'_2u$, according to the second and third conditions of the definitions of $R_1'$ and $R'_2$, we consider four subcases.
    \begin{enumerate}
    \item $sR_1t$ and $t\in(\overline{E_1}\cap\overline{E_2})\cup(E_1\cap E_2)$ and $sR_2u$ and $u\in(\overline{E_1}\cap\overline{E_2})\cup(E_1\cap E_2)$. In this case, we have $g(t)=t$ and $g(u)=u$, and therefore $g(s)R_1g(t)$ and $g(s)R_2g(u)$.
    \item $sR_1t$ and $t\in(\overline{E_1}\cap\overline{E_2})\cup(E_1\cap E_2)$ and $u=(u',y,i)\in S'$ and $sR_2u'$, where $i\in\{1,2\}$. In this case, $g(t)=t$ and $g(u)=u'$, and therefore $g(s)R_1g(t)$ and $g(s)R_2g(u)$.
    \item $t=(t',x,i)\in S'$ and $sR_1t'$, where $i\in\{1,2\}$ and $sR_2u$ and $u\in(\overline{E_1}\cap\overline{E_2})\cup(E_1\cap E_2)$. In this case, $g(t)=t'$ and $g(u)=u$, and then $g(s)R_1g(t)$ and $g(s)R_2g(u)$.
    \item $t=(t',x,i)\in S'$ and $sR_1t'$ and $u=(u',y,j)\in S'$ and $sR_2u'$, where $i,j\in\{1,2\}$. In this case, we have $g(t)=t'$ and $g(u)=u'$, and therefore $g(s)R_1g(t)$ and $g(s)R_2g(u)$.
    \end{enumerate}
\item $s$ is of the form $(x,y,i)\in S'$, where $i\in\{1,2\}$. Since $sR'_1t$ and $sR'_2u$, according to the fourth and fifth conditions of the definitions of $R'_1$ and $R'_2$, we consider four subcases.
    \begin{enumerate}
    \item $s=(s',t,i)\in S'$ and $s=(s'',u,j)\in S'$ and $t,u\in(\overline{E_1}\cap\overline{E_2})\cup(E_1\cap E_2)$, where $i,j\in\{1,2\}$. In this case, we would have $t=u$: a contradiction.
    \item $s=(s',t,i)\in S'$ and $t\in(\overline{E_1}\cap\overline{E_2})\cup(E_1\cap E_2)$ and $u=(t,y,j)\in S'$, where $i,j\in\{1,2\}$. In this case, $t\in (E_1\cap\overline{E_2})\cup(\overline{E_1}\cap E_2)$: a contradiction.
    \item $s=(s'',u,i)\in S'$ and $u\in(\overline{E_1}\cap\overline{E_2})\cup(E_1\cap E_2)$ and $t=(u,x,j)\in S'$, where $i,j\in\{1,2\}$. In this case, $u\in (E_1\cap\overline{E_2})\cup(\overline{E_1}\cap E_2)$: a contradiction.
    \item $s=(s',t',i)\in S'$ and $t=(t',x,j)\in S'$ and $u=(t',y,k)\in S'$, where $i,j,k\in\{1,2\}$. In this case, we would have $g(t)=g(u)=t'$: a contradiction.
    \end{enumerate}
\end{enumerate}
\end{proof}

It is worth remarking that the precondition `$g(t)\neq g(u)$' in the statement of the above proposition cannot be weakened to `$t\neq u$'. For instance, in $\M$, $s'R_1t'$ and $t'R_1x$ and $t'R_1y$ and $x\neq y$ but $s'$ and $t'$ both have no $R_2$-successors. According to the fifth conditions of our definitions of $R'_1$ and $R'_2$, in $\M'$, $(s',t',1)R_1'(t',x,1)$ and $(s',t',1)R_2'(t',y,1)$ and $(t',x,1)\neq (t',y,1)$. However, $g(s',t',1)=s'$, which implies that $g(s',t',1)$ has no $R_2$-successors, thus we have no $g(s',t',1)R_2g(t',y,1)$.

\medskip

The following result states that $g$ also satisfies the condition (Back) of a $\boxdot$-morphism.
\begin{proposition}\label{prop.back} If $g(s)R_1t'$ and $g(s)R_2u'$ and $t'\neq u'$, then there are $t$ and $u$ in $S'$ such that $sR'_1t$ and $sR'_2u$ and $g(t)=t'$ and $g(u)=u'$.
\end{proposition}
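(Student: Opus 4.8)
The plan is to verify the (Back) condition for $g$ by a case analysis on the form of $s\in S'$, mirroring the three-part partition of $S'$ in Def.~\ref{def.cm-d}: either $s\in\overline{E_1}\cap\overline{E_2}$, or $s\in E_1\cap E_2$, or $s$ is a copy-point $(x,y,i)$ with $i\in\{1,2\}$. Two of these three cases turn out to be \emph{vacuous}. If $s\in\overline{E_1}\cap\overline{E_2}$ then $g(s)=s$ has no $R_1$- and no $R_2$-successors; and if $s=(x,y,i)$ then by the definition of $S'$ its first component satisfies $x\in E_i\cap\overline{E_j}$ for $j\neq i$, so $g(s)=x$ has no $R_j$-successors. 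In either situation the hypothesis ``$g(s)R_1t'$ and $g(s)R_2u'$'' can never be met, so the implication holds trivially.

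Hence the real work lies in the case $s\in E_1\cap E_2$, where $g(s)=s$, and I would argue as follows. Given $sR_1t'$, $sR_2u'$ and $t'\neq u'$ in $\M$, I build a witness $t\in S'$ for $t'$ by subdividing on where $t'$ sits. If $t'\in(\overline{E_1}\cap\overline{E_2})\cup(E_1\cap E_2)$, then $t'\in S'$ with $g(t')=t'$, and the second clause in the definition of $R'_1$ applies (since $s\in E_1\cap E_2$, $sR_1t'$, and $t'$ lies in the required set), giving $sR'_1t'$; so take $t=t'$. Otherwise $t'\in(E_1\cap\overline{E_2})\cup(\overline{E_1}\cap E_2)$; put $k=1$ if $t'\in E_1\cap\overline{E_2}$ and $k=2$ if $t'\in\overline{E_1}\cap E_2$, so that $t'\in E_k$ and there is some $v$ with $t'R_kv$. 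Then $(t',v,k)\in S'$, $g((t',v,k))=t'$, and the third clause in the definition of $R'_1$ gives $sR'_1(t',v,k)$; so take $t=(t',v,k)$. The witness $u$ for $u'$ is produced symmetrically, invoking the second and third clauses of the definition of $R'_2$ instead.

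The argument is essentially bookkeeping; the step that needs the most care is the non-trivial subcase of $s\in E_1\cap E_2$, where one must observe that the triple $(t',v,k)$ genuinely belongs to $S'$ — which is precisely why $S'$ was defined to contain such triples — and must track that $g$ maps it back to $t'$. Combined with Prop.~\ref{prop.forth}, which is the (Forth) condition, and with the evident facts that $g$ is surjective and preserves the valuation by the definition of $V'$, this shows that $g$ is a surjective $\boxdot$-morphism from $\M'$ onto the original $\M$; together with Prop.~\ref{prop.serial} and Prop.~\ref{prop.boxdot-modelproperty} it follows that every bimodal model is a $\boxdot$-morphic image of a serial one, which is the conclusion this subsection is aiming at.
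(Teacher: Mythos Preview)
Your proof is correct and follows essentially the same approach as the paper. The only presentational difference is that the paper first observes from the hypothesis that $g(s)\in E_1\cap E_2$ and then argues $g(s)=s$ by ruling out the triple case, whereas you do an explicit three-way case split on $s$ and show two branches are vacuous; the substantive subcase analysis on $t'$ (and symmetrically $u'$) is identical, and like the paper you never actually use the hypothesis $t'\neq u'$ (the paper explicitly records this as a stronger statement $(\ast)$).
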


\begin{proof}
We show a stronger result:

($\ast$)~~~If $g(s)R_1t'$ and $g(s)R_2u'$, then there are $t$ and $u$ in $S'$ such that $sR'_1t$ and $sR'_2u$ and $g(t)=t'$ and $g(u)=u'$.

Assume that $g(s)R_1t'$ and $g(s)R_2u'$. It is easy to see that $g(s)\in E_1\cap E_2$. Then we must have $g(s)=s$: otherwise, by the definition of $g$, $g(s)=s'$ and $s=(s',x,i)\in S'$ where $i\in\{1,2\}$, then $s'\in E_1\cap E_2$ and {\em either} $s'\in E_1\cap \overline{E_2}$ {\em or} $s'\in\overline{E_1}\cap E_2$, which is impossible. Thus $sR_1t'$ and $sR_2u'$. Since $t'\in S$, we have the following cases.
\begin{itemize}
\item $t'\in(\overline{E_1}\cap\overline{E_2})\cup(E_1\cap E_2)$. Then by the second conditions of $R'_1$, it follows that $sR'_1t'$; by the definition of $g$, $g(t')=t'$. Therefore, $t'$ is the desired $t$.
\item $t'\in E_i\cap \overline{E_j}$, where $i,j\in\{1,2\}$ and $i\neq j$.
    In this case, $t'R_ix$ for some $x$, then $(t',x,i)\in S'$. By the third condition of the definition of $R'_1$, $sR'_1(t',x,i)$; by the definition of $g$, $g(t',x,i)=t'$. Therefore, $(t',x,i)$ is the desired $t$.
\end{itemize}
\weg{Since $t',u'\in S$, we have the following cases.
\begin{itemize}
\item $t',u'\in(\overline{E_1}\cap\overline{E_2})\cup(E_1\cap E_2)$. Then by the second conditions of $R'_1$ and $R'_2$, it follows that $sR'_1t'$ and $sR'_2u'$; by the definition of $g$, $g(t')=t'$ and $g(u')=u'$. Therefore, $t'$ and $u'$ are the desired $t$ and $u$, respectively.
\item $t'\in(\overline{E_1}\cap\overline{E_2})\cup(E_1\cap E_2)$ and $u'\in E_i\cap \overline{E_j}$, where $i,j\in\{1,2\}$ and $i\neq j$. In this case, $u'R_iy$ for some $y$, then $(u',y,i)\in S'$. By the third condition of the definition of $R'_2$, $sR'_2(u',y,i)$; by the definition of $g$, $g(u',y,i)=u'$. Moreover, we have $sR'_1t'$ and $g(t')=t'$. Therefore, $t'$ and $(u',y,i)$ are the desired $t$ and $u$, respectively.
\item $u'\in(\overline{E_1}\cap\overline{E_2})\cup(E_1\cap E_2)$ and $t'\in E_i\cap \overline{E_j}$, where $i,j\in\{1,2\}$ and $i\neq j$.
    In this case, $t'R_ix$ for some $x$, then $(t',x,i)\in S'$. By the third condition of the definition of $R'_1$, $sR'_1(t',x,i)$; by the definition of $g$, $g(t',x,i)=t'$. Moreover, we have $sR'_1u'$ and $g(u')=u'$. Therefore, $(t',x,i)$ and $u'$ is the desired $t$ and $u$, respectively.
\item $t'\in E_i\cap \overline{E_j}$, where $i,j\in\{1,2\}$ and $i\neq j$, and $u'\in E_k\cap \overline{E_l}$, where $k,l\in\{1,2\}$ and $k\neq l$. In this case, $t'R_ix$ for some $x$, then $(t',x,i)\in S'$. By the third condition of the definition of $R'_1$, $sR'_1(t',x,i)$; by the definition of $g$, $g(t',x,i)=t'$. We have also $u'R_ky$ for some $y$, then $(u',y,k)\in S'$. By the third condition of the definition of $R'_2$, $sR'_2(u',y,k)$; by the definition of $g$, $g(u',y,k)=u'$. Therefore, $(t',x,i)$ and $(u',y,k)$ are the desired $t$ and $u$, respectively.
\end{itemize}}
We have thus shown that there exists $t\in S'$ such that $sR'_1t$ and $g(t)=t'$.

Similarly, from $u'\in S$ and $sR_2u'$, we can show that there exists $u\in S'$ such that $sR'_2u$ and $g(u)=u'$, as desired.
\weg{(2) Assume that $g(s)R^c_1t'$ and $g(s)R^c_2u'$. It is easy to see that $g(s)\in E_1\cap E_2$. Then we must have $g(s)=s$: otherwise, by the definition of $g$, $g(s)=s'$ and $s=(s',x,i)\in S'$, then $s'\in E_1\cap E_2$ and either $s'\in E_1\cap \overline{E_2}$ or $s'\in\overline{E_1}\cap E_2$, which is impossible. Thus $sR^c_1t'$ and $sR^c_2u'$. By the second condition of the definitions of $R'_1$ and $R'_2$, we have $sR'_1t'$ and $sR'_2u'$.}
\end{proof}

We have now shown that $g$ is a $\boxdot$-morphism from $\M'$ to $\M$. Then by Prop.~\ref{prop.boxdot-modelproperty}, we immediately have

\begin{lemma}\label{lem.equivalent}
For all $s\in S'$, for all $\phi\in\mathcal{L}(\boxdot)$, we have
$$\M',s\vDash\phi\iff\M,g(s)\vDash\phi.$$
\end{lemma}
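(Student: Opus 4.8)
The plan is to derive this directly from the invariance result for $\boxdot$-morphisms, Proposition~\ref{prop.boxdot-modelproperty}. That proposition says that whenever $f$ is a $\boxdot$-morphism from one bimodal model to another, $\mathcal{L}(\boxdot)\cup\mathcal{L}(\boxplus)$-formulas are preserved in both directions along $f$. So the entire task reduces to checking that the map $g:S'\to S$ introduced in Definition~\ref{def.cm-d} is a $\boxdot$-morphism from $\M'$ to $\M$, i.e. that it satisfies the three clauses (Var), (Forth), (Back) of Definition~\ref{def.boxdot-morphism}.

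First I would verify (Var). This is immediate from the definition of $V'$ in Definition~\ref{def.cm-d}: since $V'(p)=\{s\in S'\mid g(s)\in V(p)\}$, we have $s\in V'(p)$ iff $g(s)\in V(p)$ for every $p\in\BP$ and every $s\in S'$.

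Next, (Forth) is precisely the content of Proposition~\ref{prop.forth}, and (Back) is precisely the content of Proposition~\ref{prop.back}. Hence $g$ is a $\boxdot$-morphism from $\M'$ to $\M$, and Proposition~\ref{prop.boxdot-modelproperty} (applied with the morphism taken to be $g$) yields $\M',s\vDash\phi\iff\M,g(s)\vDash\phi$ for all $s\in S'$ and all $\phi\in\mathcal{L}(\boxdot)\cup\mathcal{L}(\boxplus)$; restricting attention to $\mathcal{L}(\boxdot)$ gives the stated claim.

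Since all of the genuine work — constructing $\M'$ and verifying the forth and back conditions — has already been carried out in Definition~\ref{def.cm-d} and Propositions~\ref{prop.serial}, \ref{prop.forth} and \ref{prop.back}, there is no real obstacle remaining at this point: the step that was delicate is the case analysis in Propositions~\ref{prop.forth} and~\ref{prop.back}, which is forced by the several shapes a point of $S'$ can take (the retained original points of $\overline{E_1}\cap\overline{E_2}$ and of $E_1\cap E_2$, versus the copies $(s,t,i)$) and by the need to keep the hypothesis of (Forth) as $g(t)\neq g(u)$ rather than merely $t\neq u$. Given those results, the present lemma is a one-line corollary, and I would present it as such.
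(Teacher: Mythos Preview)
Your proposal is correct and matches the paper's own argument exactly: the paper simply notes that Propositions~\ref{prop.forth} and~\ref{prop.back} (together with the definition of $V'$) establish that $g$ is a $\boxdot$-morphism from $\M'$ to $\M$, and then invokes Proposition~\ref{prop.boxdot-modelproperty}. Your explicit verification of (Var) is a small addition the paper leaves implicit, but otherwise the approaches are identical.
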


\weg{\begin{proof}
By induction on $\phi$. The nontrivial case is $\boxdot\phi$.

Suppose that $\M',s\nvDash\boxdot\phi$. Then there are $t,u\in S'$ such that $sR'_1t$ and $sR'_2u$ and it is {\em not} the case that $\M',t\vDash\phi\iff \M',u\vDash\phi$. By induction hypothesis, it is {\em not} the case that $\M,g(t)\vDash\phi\iff \M,g(u)\vDash\phi$, and thus $g(t)\neq g(u)$. By Prop.~\ref{prop.forth}, $g(s)R_1g(t)$ and $g(s)R_2g(u)$, and hence $\M,g(s)\nvDash\boxdot\phi$.

Conversely, assume that $\M,g(s)\nvDash\boxdot\phi$. Then there are $t',u'\in S$ such that $g(s)R_1t'$ and $g(s)R_2u'$ and it is {\em not} the case that $\M,t'\vDash\phi\iff \M,u'\vDash\phi$. By Prop.~\ref{prop.back}, there are $t,u\in S'$ such that $sR_1't$ and $sR_2'u$ and $g(t)=t'$ and $g(u)=u'$. By induction hypothesis, it is {\em not} the case that $\M,t\vDash\phi\iff \M,u\vDash\phi$, and therefore $\M',s\nvDash\boxdot\phi$, as desired.
\end{proof}}

To show the completeness, we also need the following result.
\begin{lemma}\label{lem.surjective}
$g$ is surjective.
\end{lemma}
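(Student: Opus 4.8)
The plan is to argue directly from the four-way partition of $S$ into $\overline{E_1}\cap\overline{E_2}$, $E_1\cap\overline{E_2}$, $\overline{E_1}\cap E_2$ and $E_1\cap E_2$ noted just before Def.~\ref{def.cm-d}. Fix an arbitrary $s\in S$; I would show that $s$ has a $g$-preimage in $S'$ by a case distinction on which cell of the partition contains $s$, and since $s$ is arbitrary this yields surjectivity of $g$.

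If $s\in(\overline{E_1}\cap\overline{E_2})\cup(E_1\cap E_2)$, then by the definition of $S'$ we have $s\in S'$, and by the definition of $g$ we have $g(s)=s$; so $s$ itself is the required preimage. If $s\in E_1\cap\overline{E_2}$, then, since $s\in E_1$, there is some $t\in S$ with $sR_1t$; hence $(s,t,1)$ belongs to the third component of $S'$, and $g((s,t,1))=s$ by the definition of $g$. Symmetrically, if $s\in\overline{E_1}\cap E_2$, then $s\in E_2$ gives some $t\in S$ with $sR_2t$, so $(s,t,2)\in S'$ and $g((s,t,2))=s$. In every case $s$ lies in the image of $g$, which completes the proof.

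There is no genuine obstacle here: the whole content is the bookkeeping provided by the partition, and the only step that requires even minimal attention is that in the last two cases one must actually exhibit a successor of $s$ (under $R_1$, respectively $R_2$) in order for the relevant triple to be a bona fide element of $S'$ — but that is exactly what membership of $s$ in $E_1$ (respectively $E_2$) guarantees. I would present the four cases in the order above, mirroring the order in which $S'$ is defined, so that each preimage claim reads off immediately from the corresponding clause of Def.~\ref{def.cm-d}.
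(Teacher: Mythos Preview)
Your proof is correct and follows essentially the same approach as the paper's: the paper also fixes $s\in S$ and distinguishes whether $s\in(\overline{E_1}\cap\overline{E_2})\cup(E_1\cap E_2)$ or $s\in E_i\cap\overline{E_j}$, exhibiting $s$ itself, respectively a triple $(s,t,i)$, as the preimage. The only cosmetic difference is that the paper merges your last two cases into a single case parametrized by $i\in\{1,2\}$.
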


\begin{proof}
Suppose that $s\in S$, to find a $x\in S'$ such that $g(x)=s$. We consider two cases.
\begin{itemize}
\item $s\in(\overline{E_1}\cap \overline{E_2})\cup (E_1\cap E_2)$. According to the definition of $g$, we have $g(s)=s$; clearly, $s\in S'$.
\item $s\in E_i\cap \overline{E_j}$, where $i,j\in\{1,2\}$ and $i\neq j$. Then $sR_it$ for some $t$. It follows that $(s,t,i)\in S'$. By the definition of $g$, we have $g(s,t,i)=s$.
\end{itemize}
\end{proof}

\begin{theorem}
${\bf K^{\boxdot}}$ is sound and strongly complete with respect to the class of serial bimodal frames.
\end{theorem}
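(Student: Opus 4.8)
The plan is to obtain serial completeness as an easy corollary of two things already in hand: strong completeness of ${\bf K^\boxdot}$ over the class of all bimodal frames (Thm.~\ref{thm.k-completeness}), and the serialization construction of Def.~\ref{def.cm-d} together with the properties established for it. Soundness needs no fresh argument: every serial bimodal frame is a bimodal frame, over which ${\bf K^\boxdot}$ has already been shown sound, so ${\bf K^\boxdot}$ is a fortiori sound over the subclass of serial frames.

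For strong completeness, it suffices to show that every ${\bf K^\boxdot}$-consistent set $\Gamma\subseteq\mathcal{L}(\boxdot)$ is satisfiable on a serial bimodal model. By Lindenbaum's Lemma, extend $\Gamma$ to a maximal ${\bf K^\boxdot}$-consistent set $s\in S^c$, where $\M^c=\lr{S^c,R^c_1,R^c_2,V^c}$ is the canonical model of Def.~\ref{def.cm-boxdot}; by the Truth Lemma (Lemma~\ref{lem.truth-lemma-k}), $\M^c,s\vDash\psi$ for every $\psi\in\Gamma$. Now apply Def.~\ref{def.cm-d} to $\M^c$, obtaining a bimodal model $\M'=\lr{S',R'_1,R'_2,V'}$ and the map $g:S'\to S^c$. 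By Prop.~\ref{prop.serial}, $\M'$ is serial; by Props.~\ref{prop.forth} and~\ref{prop.back}, together with the definition of $V'$ (which makes (Var) hold by construction), $g$ is a $\boxdot$-morphism from $\M'$ to $\M^c$; and by Lemma~\ref{lem.surjective}, $g$ is surjective. Hence, by Lemma~\ref{lem.equivalent}, $\M',x\vDash\phi\iff\M^c,g(x)\vDash\phi$ for all $x\in S'$ and all $\phi\in\mathcal{L}(\boxdot)$. Using surjectivity, choose $x\in S'$ with $g(x)=s$; then $\M',x\vDash\psi$ for every $\psi\in\Gamma$. Thus $\Gamma$ is satisfied at $x$ in the serial model $\M'$, hence on the underlying serial bimodal frame, which is exactly what strong completeness over serial frames demands.

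The genuine difficulty is already front-loaded into the auxiliary results: verifying that the somewhat intricate case split in Def.~\ref{def.cm-d}, which ``splits'' worlds having successors along only one of $R_1,R_2$ into copies so as to repair seriality, really does produce a serial model and a map $g$ meeting the (Forth) and (Back) clauses. Granting Props.~\ref{prop.serial}--\ref{prop.back} and Lemmas~\ref{lem.equivalent},~\ref{lem.surjective}, the theorem is a short assembly. The one conceptual point worth underlining is why the serialization is run on the canonical model $\M^c$ (rather than on an arbitrary model satisfying $\Gamma$): since we want strong, not merely weak, completeness, $\Gamma$ may be infinite, so we invoke the canonical-model Truth Lemma to realize it, and then the $\boxdot$-morphism invariance of Lemma~\ref{lem.equivalent}, being formula-by-formula, transports satisfaction of the whole set $\Gamma$ along $g$ at no extra cost.
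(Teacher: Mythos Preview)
Your proof is correct and follows essentially the same route as the paper: serialize via Def.~\ref{def.cm-d}, then transport satisfaction along the surjective $\boxdot$-morphism $g$ using Prop.~\ref{prop.serial}, Lemmas~\ref{lem.equivalent} and~\ref{lem.surjective}. The only cosmetic difference is that the paper first invokes Thm.~\ref{thm.k-completeness} to get \emph{some} bimodal model $(\M,s)$ satisfying $\Gamma$ and then applies Def.~\ref{def.cm-d} to that $\M$, whereas you apply it directly to the canonical model; since Def.~\ref{def.cm-d} is stated for an arbitrary bimodal model, either works, and your closing remark that the canonical model is specifically needed for strong completeness is unnecessary.
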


\begin{proof}
Let $\Gamma$ be a consistent set. By Thm.~\ref{thm.k-completeness}, $\Gamma$ is satisfiable in a bimodal model, say $(\M,s)$. We then construct $\M'$ from $\M$ as in Def.~\ref{def.cm-d}. By Lemma~\ref{lem.surjective}, there exists $x\in S'$ such that $g(x)=s$, and thus $\M,g(x)\vDash\Gamma$. Then by Lemma~\ref{lem.equivalent}, $\M',x\vDash\Gamma$. We also know that $\M'$ is serial by Prop.~\ref{prop.serial}. Therefore, $\Gamma$ is satisfiable in a serial bimodal model, as desired.
\end{proof}

\weg{\begin{proof}
Define $\M^c$ as in Def.~\ref{def.cm-boxdot}. We reflexivize all $R_1^c$-dead ends and $R_2^c$-dead ends; formally, let $R_1^{\bf D}=\{(u,u)\mid u\text{ is a dead end w.r.t. }R^c_1\}$ and $R_2^{\bf D}=\{(u,u)\mid u\text{ is a dead end w.r.t. }R_2^c\}$ and then define $\M^{\bf D}=\lr{S^c,R_1^{\bf D},R_2^{\bf D},V^c}$. It is now clear that $\M^{\bf D}$ is serial.

Moreover, the truth values of $\mathcal{L}(\boxdot)$-formulas are preserved during the transformation, that is, for all $s\in S^c$ and for all $\phi\in\mathcal{L}(\boxdot)$, we have $\M^c,s\vDash\phi$ iff $\M^{\bf D},s\vDash\phi$. The proof goes with induction on $\phi$, in which the nontrivial case is $\boxdot\phi$. If $s$ is not a $R^c_1$-dead end nor $R^c_2$-dead end, then the claim is clear; otherwise, we have $\M^c,s\vDash\boxdot\phi$ and also
\end{proof}}

\subsubsection{Reflexive logic}

In this section, we show that ${\bf KT^\boxdot}$ is sound and strongly complete with respect to the class of reflexive bimodal frames. As we will see, ${\bf KT^\boxdot}$ is also sound and strongly complete with respect to the class of bimodal frames $\lr{S,R_1,R_2}$ where either $R_1$ or $R_2$ is reflexive.


\begin{proposition}\label{prop.validity-t}
$\boxdot\text{T}$ is valid on the class of reflexive bimodal frames.
\end{proposition}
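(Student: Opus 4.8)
The plan is to argue semantically. Fix a reflexive bimodal model $\M=\lr{S,R_1,R_2,V}$ and a world $s\in S$, and suppose $\M,s\vDash\phi$, $\M,s\vDash\boxdot\phi$ and $\M,s\vDash\boxdot(\phi\to\psi)$; the goal is to derive $\M,s\vDash\boxdot\psi$, i.e.\ that every $R_1$-successor of $s$ and every $R_2$-successor of $s$ assign the same truth value to $\psi$.

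The first step exploits reflexivity to locate $s$ among its own successors: $sR_1s$ and $sR_2s$. Feeding this into the semantics of $\boxdot\phi$ at $s$ --- which says that for all $t,u$ with $sR_1t$ and $sR_2u$ we have $\M,t\vDash\phi\iff\M,u\vDash\phi$ --- and instantiating $u:=s$ (respectively $t:=s$), together with $\M,s\vDash\phi$, yields $R_1(s)\vDash\phi$ and $R_2(s)\vDash\phi$. So at every successor of $s$, of either kind, the antecedent $\phi$ of the implication $\phi\to\psi$ is already true.

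The second step combines this with $\boxdot(\phi\to\psi)$. Take any $t\in R_1(s)$ and any $u\in R_2(s)$. By the semantics of $\boxdot(\phi\to\psi)$ at $s$ we have $\M,t\vDash\phi\to\psi$ iff $\M,u\vDash\phi\to\psi$; but since $\M,t\vDash\phi$ and $\M,u\vDash\phi$, in fact $\M,t\vDash\phi\to\psi$ iff $\M,t\vDash\psi$, and $\M,u\vDash\phi\to\psi$ iff $\M,u\vDash\psi$. Hence $\M,t\vDash\psi$ iff $\M,u\vDash\psi$, and since $t,u$ were arbitrary, $\M,s\vDash\boxdot\psi$. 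This establishes the validity of $\boxdot\text{T}$ on reflexive bimodal frames; note that only reflexivity of one of the two relations (plus the vacuous truth of $\boxdot\psi$ when the other successor set is empty) is actually used, which matches the subsequent remark in the text.

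The only subtle point --- and where a naive attempt goes astray --- is that one may \emph{not} pass directly from $\boxdot(\phi\to\psi)$ and reflexivity to $R_i(s)\vDash\phi\to\psi$, since $\M,s\vDash\psi$ is not among the hypotheses; the argument must first secure $\phi$ at all successors (step one) so that $\phi\to\psi$ collapses to $\psi$ there. An alternative packaging of the same idea: a reflexive frame is serial and $s\in R_1(s)\cap R_2(s)$, so Corollary~\ref{coro.equiv} lets one replace each $\boxdot\chi$ at $s$ by $\Delta_1\chi\land\Delta_2\chi$, reducing the claim to the validity of the familiar noncontingency reflexivity schema $\phi\to(\Delta_i\phi\to(\Delta_i(\phi\to\psi)\to\Delta_i\psi))$ on reflexive monomodal frames, for $i=1,2$.
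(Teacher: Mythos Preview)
Your proof is correct. The paper's own argument is precisely the ``alternative packaging'' you sketch at the end: it uses $s\in R_1(s)\cap R_2(s)$ together with Corollary~\ref{coro.equiv} to rewrite each $\boxdot\chi$ at $s$ as $\Delta_1\chi\land\Delta_2\chi$, then invokes the known validity of $\phi\land\Delta_i\phi\land\Delta_i(\phi\to\psi)\to\Delta_i\psi$ on reflexive frames (for $i=1,2$), and closes with Corollary~\ref{coro.equiv} again. Your primary, direct semantic computation is a more elementary and self-contained route that avoids both the corollary and the external $\Delta$-result; the paper's reduction is terser once that machinery is already in place. One minor point: your first step, as literally written, instantiates with both $sR_1s$ and $sR_2s$, so the parenthetical claim that only one reflexive relation is needed requires the small tweak you allude to (if, say, only $R_1$ is reflexive: handle $R_2(s)=\emptyset$ vacuously; otherwise use $t:=s$ to push $\phi$ to some $u_0\in R_2(s)$, then $u:=u_0$ to push $\phi$ across all of $R_1(s)$).
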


\begin{proof}
Let $\M=\lr{S,R_1,R_2,V}$ be an arbitrary reflexive bimodal model and $s\in S$. Suppose that $\M,s\vDash\phi\land\boxdot\phi\land\boxdot(\phi\to\psi)$, to show that $s\vDash\boxdot\psi$. Since $s\in R_1(s)\cap R_2(s)$, by Coro.~\ref{coro.equiv}, from $\M,s\vDash\boxdot\phi\land\boxdot(\phi\to\psi)$ it follows that $s\vDash\Delta_1\phi\land\Delta_2\phi\land\Delta_1(\phi\to\psi)\land\Delta_2(\phi\to\psi)$. By the obtained result,\footnote{That is, $\phi\land\Delta\phi\land\Delta(\phi\to\psi)\to\Delta\psi$ is valid over the class of reflexive frames $\lr{S,R}$, see e.g.~\cite{Fanetal:2015}.} we can show that $s\vDash\Delta_1\psi\land\Delta_2\psi$. Now using Coro.~\ref{coro.equiv}, we conclude that $\M,s\vDash\boxdot\psi$.
\end{proof}

As one may easily verify, the above statement still holds if the class of reflexive bimodal frames is enlarged to the class of bimodal frames where at least one accessibility relation is reflexive, that is, $\boxdot\text{T}$ is valid over bimodal frames $\lr{S,R_1,R_2}$ where $R_1$ or $R_2$ is reflexive.

\weg{\begin{proposition}
$\boxdot\text{T}$ is valid over bimodal frames where $R_1$ or $R_2$ is reflexive.
\end{proposition}

\begin{proof}
Suppose not, that is, for some model $\M=\lr{S,R_1,R_2,V}$ where $R_1$ or $R_2$ is reflexive such that $\M,s\vDash\phi\land\boxdot\phi\land\boxdot(\phi\to\psi)$ but $s\nvDash\boxdot\psi$. Then there are $t,u\in S$ such that $sR_1t$ and $sR_2u$ and {\em either} ($t\vDash\psi$ and $u\nvDash\psi$) {\em or} ($t\nvDash\psi$ and $u\vDash\psi$).  In the first case, we have $t\vDash\phi\to\psi$, from which by $s\vDash\boxdot(\phi\to\psi)$ it follows that $u\vDash\phi\to\psi$ and thus $u\nvDash\phi$. This then implies that $t\nvDash\phi$ due to the fact that $s\vDash\boxdot\phi$. If $R_1$ is reflexive, then $sR_1s$, and by $s\vDash\boxdot\phi$ and $u\nvDash\phi$ and $sR_2u$, we should have $s\nvDash\phi$: a contradiction; if $R_2$ is reflexive, then $sR_2s$, and by $s\vDash\boxdot\phi$ and $t\nvDash\phi$ and $sR_1t$, we should have also $s\nvDash\phi$: again a contradiction.

In the second case,
\end{proof}}

\begin{definition}\label{def.cm-t}
Define $\M^c$ w.r.t. ${\bf KT^\boxdot}$ as in Def.~\ref{def.cm-boxdot}. We say $\M^r=\lr{S^c,R_1^r,R^r_2,V^c}$ is the {\em reflexive closure} of $\M^c$, if for all $i\in\{1,2\}$, $R^r_i$ is the reflexive closure of $R^c_i$; in symbol, $R_i^r=R^c_i\cup\{(s,s)\mid s\in S\}$ for $i\in\{1,2\}$.
\end{definition}

It is clear that $\M^r$ is a reflexive bimodal model.

\begin{lemma}[Truth Lemma for ${\bf KT^\boxdot}$]\label{lem.truthlemma-t}
For all $s\in S^c$, for all $\phi\in\mathcal{L}(\boxdot)$, we have
$$\phi\in s\text{ iff }\M^r,s\vDash\phi.$$
\end{lemma}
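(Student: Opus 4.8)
The plan is to reduce the statement to two facts that are either already available or provable by a short induction. First I would observe that the Truth Lemma for ${\bf K^\boxdot}$ (Lemma~\ref{lem.truth-lemma-k}) applies verbatim to the present $\M^c$: its proof only invokes axioms and rules of ${\bf K^\boxdot}$ and Prop.~\ref{prop.boxdot-property}, all of which remain available in ${\bf KT^\boxdot}$, so $\M^c,s\vDash\phi$ iff $\phi\in s$ for every $\phi\in\mathcal{L}(\boxdot)$. Hence it suffices to prove, by induction on $\phi$, that $\M^c,s\vDash\phi$ iff $\M^r,s\vDash\phi$ for all $s\in S^c$; combining the two yields the lemma. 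The atomic and Boolean cases are immediate because $\M^r$ and $\M^c$ share the valuation $V^c$.

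The only real work is the case $\boxdot\phi$. Recall $R^c_1=R^c_2=:R^c$, hence $R^r_1=R^r_2=R^c\cup\{(x,x)\mid x\in S^c\}$. The direction $\M^r,s\vDash\boxdot\phi\Rightarrow\M^c,s\vDash\boxdot\phi$ is easy: every $R^c$-successor of $s$ is an $R^r$-successor, so the agreement condition for $\boxdot\phi$ over $R^r$-successors restricts to one over $R^c$-successors, and the induction hypothesis transfers truth of $\phi$ between the two models at those worlds. For the converse, suppose $\M^c,s\vDash\boxdot\phi$, i.e.\ $\boxdot\phi\in s$ (by the reused Truth Lemma), and hence also $\boxdot\neg\phi\in s$ by $\boxdot\text{EQU}$. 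Using the induction hypothesis and the reused Truth Lemma, $\M^r,s\vDash\boxdot\phi$ amounts to the assertion that all members of $R^c(s)\cup\{s\}$ agree on membership of $\phi$. If $R^c(s)=\emptyset$ this is trivial; if $R^c(s)\neq\emptyset$, the members of $R^c(s)$ already agree on $\phi$ (that is exactly the content of $\M^c,s\vDash\boxdot\phi$), so everything reduces to the following claim: \emph{if $\boxdot\phi\in s$ and $R^c(s)\neq\emptyset$, then $\phi\in s$ iff $\phi\in t$ for every $t\in R^c(s)$.}

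This claim is where the reflexivity axiom $\boxdot\text{T}$, namely $\phi\to[\boxdot\phi\to(\boxdot(\phi\to\psi)\to\boxdot\psi)]$, is needed, and I expect proving it to be the main obstacle. To establish it, assume $\phi\in s$, $t\in R^c(s)$, but $\phi\notin t$. Unfolding the definition of $sR^c_1t$ from Def.~\ref{def.cm-boxdot}, pick $\chi$ with $\neg\boxdot\chi\in s$ such that $\boxdot\psi'\land\boxdot(\chi\to\psi')\in s$ implies $\psi'\in t$ for all $\psi'$; instantiating $\psi'=\phi$ and using $\phi\notin t$ together with $\boxdot\phi\in s$ forces $\boxdot(\chi\to\phi)\notin s$. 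On the other hand, $\phi\to(\chi\to\phi)$ is a tautology, so $\boxdot(\phi\to(\chi\to\phi))\lra\boxdot\top$ is provable by $\text{RE}\boxdot$, whence $\boxdot(\phi\to(\chi\to\phi))\in s$ since $\vdash\boxdot\top$; now instantiating the reflexivity axiom $\boxdot\text{T}$ with the pair $(\phi,\chi\to\phi)$ and using $\phi,\boxdot\phi\in s$ yields $\boxdot(\chi\to\phi)\in s$, a contradiction. Thus $\phi\in s$ implies $\phi\in t$; applying this with $\neg\phi$ in place of $\phi$ (legitimate since $\boxdot\neg\phi\in s$) gives the remaining half, so the claim holds. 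Assembling the two inductions then completes the proof that $\phi\in s$ iff $\M^r,s\vDash\phi$.
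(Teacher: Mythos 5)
Your proof is correct, and it reaches the same destination as the paper by a somewhat different route, so a brief comparison is in order. The paper runs a single induction directly relating membership in $s$ to truth in $\M^r$: the `if' direction is discharged by citing Lemma~\ref{lem.truth-lemma-k} together with $R^c_i\subseteq R^r_i$, and in the `only if' direction the offending pair of successors is split into the old case ($s\neq t$, $s\neq u$, handled exactly as in Lemma~\ref{lem.truth-lemma-k}) and the new case where one successor is $s$ itself; there the paper instantiates $\boxdot\text{T}$ with the pair $(\phi,\chi)$, uses $\neg\boxdot\chi\in s$ to get $\boxdot(\neg\phi\vee\chi)\notin s$, and combines this with $\boxdot(\phi\vee\neg\chi)\notin s$ via $\boxdot\text{DIS}$ to contradict $\boxdot\phi\in s$. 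You instead factor the lemma into two pieces --- the observation that Lemma~\ref{lem.truth-lemma-k} holds verbatim for the ${\bf KT^\boxdot}$ canonical model (which the paper also uses, just implicitly), plus a pointwise equivalence $\M^c,s\vDash\phi\iff\M^r,s\vDash\phi$ proved by induction --- and you isolate the genuinely new content as the syntactic claim that $\boxdot\phi\in s$ and $sR^c t$ force $s$ and $t$ to agree on $\phi$. Your derivation of that claim is slicker than the paper's: instantiating $\boxdot\text{T}$ at $\psi:=\chi\to\phi$ and using the tautology $\phi\to(\chi\to\phi)$ (so that $\boxdot(\phi\to(\chi\to\phi))\in s$ via $\text{RE}\boxdot$ and $\vdash\boxdot\top$) yields $\boxdot(\chi\to\phi)\in s$ outright, contradicting $\boxdot(\chi\to\phi)\notin s$; this dispenses with both $\boxdot\text{DIS}$ and the witness property $\neg\boxdot\chi\in s$, and your explicit appeal to $\boxdot\text{EQU}$ to run the argument with $\neg\phi$ makes precise the case that the paper buries in a ``w.l.o.g.'' (namely when $s$ plays the role of the successor falsifying $\phi$). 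The paper's version stays closer to the template of Lemma~\ref{lem.truth-lemma-k}; yours buys a cleaner modularization and a shorter use of the reflexivity axiom.
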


\begin{proof}
By induction on $\phi$. We only consider the nontrivial case $\boxdot\phi$, that is to show, $\boxdot\phi\in s$ iff $\M^r,s\vDash\boxdot\phi$.

`If': straightforward by Lemma~\ref{lem.truth-lemma-k} and the fact that $R^c_i\subseteq R^r_i$ for $i\in\{1,2\}$.

`Only if': Suppose, for a contradiction, that $\boxdot\phi\in s$ but $\M^r,s\nvDash\boxdot\phi$. By induction hypothesis, there exist $t,u\in S^c$ such that $sR^r_1t$ and $sR^r_2u$ and $(\phi\in t\not\iff\phi\in u)$. W.l.o.g. we may assume that $\phi\in t$ but $\phi\notin u$. If $s\neq t$ and $s\neq u$, then $sR^c_1t$ and $sR^c_2u$, and thus the proof continues as in the corresponding part in Lemma~\ref{lem.truth-lemma-k}, and finally we can arrive at a contradiction. If $s=t$ or $s=u$, w.l.o.g. we assume that $s=t$, and thus $s\neq u$ (as $t\neq u$), hence $sR^c_2u$.

Since $s=t$ and $\phi\in t$, we have $\phi\in s$. Because $sR^c_2u$, there is a $\chi$ such that $\neg\boxdot\chi\in s$ and $(\dag)$: for all $\psi$, if $\boxdot\psi\land\boxdot(\chi\to\psi)\in s$, then $\psi\in u$. By supposition $\boxdot\phi\in s$ and the fact that $\phi\notin u$, we derive that $\boxdot(\chi\to\phi)\notin s$, that is, $\boxdot(\phi\vee\neg\chi)\notin s$. Moreover, by axiom $\boxdot\text{T}$, $\vdash\phi\to[\boxdot\phi\to(\boxdot(\phi\to\chi)\to\boxdot\chi)]$, then as $\phi\land\boxdot\phi\land\neg\boxdot\chi\in s$, $\boxdot(\neg\phi\vee\chi)\notin s$. Now by axiom $\boxdot\text{DIS}$, it follows that $\boxdot\phi\notin s$: a contradiction again.
\end{proof}

It is natural to ask if the above claim can be generalized to any bimodal model, that is, if every bimodal model has an equivalent reflexive closure. The answer is negative. For example, the following are a bimodal model and its reflexive closure, but one may check that $\M,w\vDash\boxdot p$ whereas $\M^r,w\nvDash\boxdot p$.
\[
\xymatrix{\M&w:p\ar[rr]|1&&v:\neg p&&\M^r&w:p\ar@(ur,ul)|{1,2}\ar[rr]|1&&v:\neg p\ar@(ul,ur)|{1,2}\\}
\]

With the soundness of ${\bf K^{\boxdot}}$ (Thm.~), Prop.~\ref{prop.validity-t} and its subsequent remark, Lindenbaum's Lemma, and Lemma~\ref{lem.truthlemma-t} in hand, the following result now follows straightforwardly.
\begin{theorem}
${\bf KT^{\boxdot}}$ is sound and strongly complete with respect to the class of reflexive bimodal frames, and also with respect to the class of bimodal frames $\lr{S,R_1,R_2}$ where either $R_1$ or $R_2$ is reflexive.
\end{theorem}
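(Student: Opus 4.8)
The plan is to assemble the pieces that are already in place: the soundness of ${\bf K^\boxdot}$ over the class of all bimodal frames (established above), the validity of $\boxdot\text{T}$ over the two target frame classes (Prop.~\ref{prop.validity-t} together with the remark following it), Lindenbaum's Lemma, and the Truth Lemma for ${\bf KT^\boxdot}$ (Lemma~\ref{lem.truthlemma-t}). No genuinely new construction is needed; the theorem is the assembly step.

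For soundness, observe that ${\bf KT^\boxdot}={\bf K^\boxdot}+\boxdot\text{T}$ adds a single axiom and no new inference rule, so it suffices to check that every instance of $\boxdot\text{T}$ is valid on each class. Validity on the class of reflexive bimodal frames is exactly Prop.~\ref{prop.validity-t}; validity on the larger class of bimodal frames in which $R_1$ or $R_2$ is reflexive is the remark immediately after that proposition. Since ${\bf K^\boxdot}$ is sound over all bimodal frames, and hence over each of these subclasses, ${\bf KT^\boxdot}$ is sound with respect to both classes.

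For strong completeness, let $\Gamma$ be an arbitrary ${\bf KT^\boxdot}$-consistent set. By Lindenbaum's Lemma extend it to a maximal ${\bf KT^\boxdot}$-consistent set $s$, so $s\in S^c$, the domain of the canonical model $\M^c$ of Def.~\ref{def.cm-t}. Form its reflexive closure $\M^r=\lr{S^c,R^r_1,R^r_2,V^c}$. By Lemma~\ref{lem.truthlemma-t}, $\phi\in s$ iff $\M^r,s\vDash\phi$ for every $\phi\in\mathcal{L}(\boxdot)$; in particular $\M^r,s\vDash\Gamma$. Since $\M^r$ is a reflexive bimodal model, $\Gamma$ is satisfiable on a reflexive bimodal frame, giving strong completeness over that class; and because a reflexive bimodal model is a fortiori a bimodal model in which $R_1$ or $R_2$ is reflexive, the same $\M^r$ witnesses strong completeness over the larger class as well.

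The only subtlety worth flagging is that the two halves of the statement pull in opposite directions: soundness for the larger class is the stronger demand (more frames over which $\boxdot\text{T}$ must validate), whereas completeness for the larger class is the weaker one (the canonical reflexive witness already lies inside it) --- but both have been arranged. The genuinely substantive work has already been discharged in Lemma~\ref{lem.truthlemma-t}, whose \emph{only if} direction handles the fresh case $s=t$ (or $s=u$) introduced by reflexivizing $R^c_1$ and $R^c_2$, invoking axiom $\boxdot\text{T}$ together with $\boxdot\text{DIS}$; so here there is no remaining obstacle, only bookkeeping.
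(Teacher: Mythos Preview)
Your proof is correct and follows essentially the same route as the paper, which simply notes that the result follows from the soundness of ${\bf K^\boxdot}$, Prop.~\ref{prop.validity-t} and its subsequent remark, Lindenbaum's Lemma, and Lemma~\ref{lem.truthlemma-t}. You have merely spelled out the assembly step in more detail, including the helpful observation that the reflexive canonical model $\M^r$ automatically witnesses completeness for the larger class as well.
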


\subsubsection{Symmetric logic}

This part deals with the soundness and strong completeness of ${\bf KB^\boxdot}$ over the class of symmetric bimodal frames. For the soundness, it suffices to show the validity of $\boxdot\text{B}$. Recall that $\boxdot\text{B}$ denotes $\phi\to\boxdot((\boxdot\phi\land\boxdot(\phi\to\psi)\land\neg\boxdot\psi)\to\chi)$.


\begin{proposition}
$\boxdot\text{B}$ is valid over the class of symmetric bimodal frames.
\end{proposition}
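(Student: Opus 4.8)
The plan is to argue by contraposition on the modal structure: suppose $\M=\lr{S,R_1,R_2,V}$ is a symmetric bimodal model, $s\in S$, and $\M,s\vDash\phi$, yet $\M,s\nvDash\boxdot((\boxdot\phi\land\boxdot(\phi\to\psi)\land\neg\boxdot\psi)\to\chi)$. The goal is to derive a contradiction. From the failure of the outer $\boxdot$, unfolding the semantics of $\boxdot$, there are witnesses $t,u\in S$ with $sR_1t$ and $sR_2u$ such that the formula $(\boxdot\phi\land\boxdot(\phi\to\psi)\land\neg\boxdot\psi)\to\chi$ takes different truth values at $t$ and $u$. Hence at (at least) one of these two worlds, say $w\in\{t,u\}$ reached from $s$ by $R_i$ with $i\in\{1,2\}$, we have $\M,w\vDash\boxdot\phi\land\boxdot(\phi\to\psi)\land\neg\boxdot\psi$ (and $\M,w\nvDash\chi$, though the value of $\chi$ will not be needed). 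So it suffices to show that $\M,w\vDash\boxdot\phi\land\boxdot(\phi\to\psi)$ together with symmetry and $\M,s\vDash\phi$ forces $\M,w\vDash\boxdot\psi$, contradicting $\M,w\vDash\neg\boxdot\psi$.

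The key step is to exploit symmetry to transport the truth of $\phi$ at $s$ back along the accessibility edge. Since $sR_iw$ and $R_i$ is symmetric, we have $wR_is$; and since $R_j$ (for the other index $j$) is also symmetric — but we do not yet have $wR_js$ — I would instead use the following route. Because $\M,w\vDash\boxdot\phi$ and $\M,s\vDash\phi$ with $wR_is$, I want $s$ to be an $R_j(w)$-world as well so that Corollary~\ref{coro.equiv} applies at $w$. This is exactly where symmetry of \emph{both} relations does not immediately suffice; however, one can argue as follows: the validity of $\boxdot\text{B}$ is established over \emph{symmetric} frames, meaning $R_1$ and $R_2$ are \emph{both} symmetric. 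So from $sR_1t$ we get $tR_1s$, and from $sR_2u$ we get $uR_2s$. Thus for $w=t$ we have $wR_1s$; for $w=u$ we have $wR_2s$. In either case $s\in R_i(w)$ for the relevant $i$. Now using $\M,w\vDash\boxdot\phi$ and Proposition~\ref{prop.boxdotto} (with $R_i(w)\ni s\neq\emptyset$), we get $\M,w\vDash\Delta_j\phi$ for the other index $j$; combined with $\M,w\vDash\boxdot\phi\Rightarrow\M,w\vDash\Delta_i\phi$ (again by Proposition~\ref{prop.boxdotto}, provided $R_j(w)\neq\emptyset$), we obtain $\M,w\vDash\Delta_1\phi\land\Delta_2\phi$. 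Since $\M,w\vDash\phi$ (because $s\in R_i(w)$ and $\M,w\vDash\boxdot\phi$ forces all $R_i(w)$-worlds to agree with the $R_j(w)$-worlds, and $s\vDash\phi$ — more carefully: $\boxdot\phi$ at $w$ plus $s\in R_i(w)$ with $s\vDash\phi$ gives that \emph{every} $R_j(w)$-world satisfies $\phi$, and symmetrically) one then similarly gets $\M,w\vDash\phi\land\Delta_1\phi\land\Delta_2\phi\land\Delta_1(\phi\to\psi)\land\Delta_2(\phi\to\psi)$, and by the known monomodal validity $\phi\land\Delta\alpha\land\Delta(\alpha\to\beta)\to\Delta\beta$ (cited in Proposition~\ref{prop.validity-t}) applied to each index, we conclude $\M,w\vDash\Delta_1\psi\land\Delta_2\psi$, hence by Corollary~\ref{coro.equiv} — noting $R_1(w)\cap R_2(w)\neq\emptyset$ since $s$ lies in both after the symmetry argument — $\M,w\vDash\boxdot\psi$, the desired contradiction.

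The main obstacle I anticipate is the careful bookkeeping of \emph{which} relation connects $w$ back to $s$ and ensuring that $s\in R_1(w)\cap R_2(w)$, since Corollary~\ref{coro.equiv} (the bridge between $\boxdot$ and $\Delta_1\phi\land\Delta_2\phi$) requires a \emph{common} successor. After establishing $s\in R_i(w)$ via symmetry of $R_i$, one must also secure $s\in R_j(w)$; I expect this follows by re-running the symmetry argument on the other edge $sR_ju$ (for $w=u$, we already have $uR_2s$; we additionally need $uR_1s$, which does \emph{not} follow directly). The clean fix is to handle the two cases $w=t$ and $w=u$ symmetrically and, in each, use only the \emph{single} index $i$ for which $wR_is$ holds, deriving $\Delta_i$-statements first and then bootstrapping to $\boxdot$-statements via Proposition~\ref{prop.boxdotfrom} once a common successor is exhibited — and the common successor is precisely obtained because $\M,w\vDash\boxdot\phi$ is vacuously compatible with both $R_1(w)$ and $R_2(w)$ being nonempty only if we are in a non-degenerate case; if say $R_j(w)=\emptyset$ then $\boxdot\psi$ holds vacuously at $w$ and we are done immediately. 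So the proof naturally splits: either $R_j(w)=\emptyset$ (trivial) or $R_j(w)\neq\emptyset$, in which case Proposition~\ref{prop.boxdotto} gives $\Delta_j$-facts and, using $wR_is$ to place $s$ in $R_i(w)$, the monomodal Brouwersche-style reasoning closes the argument.
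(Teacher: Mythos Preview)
Your setup matches the paper's: assume $s\vDash\phi$ and $s\nvDash\boxdot((\boxdot\phi\land\boxdot(\phi\to\psi)\land\neg\boxdot\psi)\to\chi)$, get $t,u$ with $sR_1t$, $sR_2u$, and pick the world $w\in\{t,u\}$ (reached via $R_i$) where $\boxdot\phi\land\boxdot(\phi\to\psi)\land\neg\boxdot\psi$ holds. From symmetry you correctly extract $wR_is$. But from here on the detour through $\Delta_1,\Delta_2$ and Corollary~\ref{coro.equiv} creates real gaps:
\begin{itemize}
\item You never establish $s\in R_1(w)\cap R_2(w)$, which is the hypothesis of Corollary~\ref{coro.equiv}. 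Symmetry gives only $s\in R_i(w)$ for the \emph{one} index $i$ along which $w$ was reached; there is no reason $s\in R_j(w)$ for the other index, so the bridge $\Delta_1\psi\land\Delta_2\psi\Rightarrow\boxdot\psi$ is not available.
\item The validity $\phi\land\Delta\alpha\land\Delta(\alpha\to\beta)\to\Delta\beta$ you invoke from Proposition~\ref{prop.validity-t} is a \emph{reflexive}-frame validity; nothing in the present hypotheses makes it applicable. Relatedly, your claim $w\vDash\phi$ is unsupported (and in fact unnecessary): symmetry does not put $w$ among its own successors.
\item ``Monomodal Brouwersche-style reasoning'' is left unspecified, and it is not clear what monomodal $\Delta$-validity over symmetric frames you would use to obtain $\Delta_j\psi$ at $w$ when you do not have $s\in R_j(w)$.
\end{itemize}

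The paper avoids all of this by working directly with the semantics of $\boxdot$, which carries \emph{cross-index} information that the reduction to $\Delta_1\land\Delta_2$ throws away. From $w\vDash\neg\boxdot\psi$ pick $v\in R_1(w)$, $x\in R_2(w)$ with $v\vDash\psi\not\iff x\vDash\psi$. Symmetry gives $s\in R_i(w)$; since $w\vDash\boxdot\phi$ and $s\vDash\phi$, every world in $R_j(w)$ (the \emph{other} index) satisfies $\phi$, and then bouncing back, every world in $R_i(w)$ does too. In particular $v\vDash\phi$ and $x\vDash\phi$. Now $w\vDash\boxdot(\phi\to\psi)$ yields $v\vDash(\phi\to\psi)\iff x\vDash(\phi\to\psi)$, i.e.\ $v\vDash\psi\iff x\vDash\psi$, the desired contradiction. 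No appeal to $\Delta_i$, Corollary~\ref{coro.equiv}, or any reflexive-frame validity is needed.
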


\begin{proof}
Let $\M=\lr{S,R_1,R_2,V}$ be a symmetric bimodal model and $s\in S$. Suppose, for a contradiction, that $\M,s\vDash\phi$ but $\M,s\nvDash\boxdot((\boxdot\phi\land\boxdot(\phi\to\psi)\land\neg\boxdot\psi)\to\chi)$. Then there exist $t,u$ such that $sR_1t$ and $sR_2u$ such that it is {\em not} the case that ($t\vDash(\boxdot\phi\land\boxdot(\phi\to\psi)\land\neg\boxdot\psi)\to\chi$ iff $u\vDash(\boxdot\phi\land\boxdot(\phi\to\psi)\land\neg\boxdot\psi)\to\chi$). W.l.o.g. we may assume that $u\nvDash(\boxdot\phi\land\boxdot(\phi\to\psi)\land\neg\boxdot\psi)\to\chi$, i.e. $u\vDash(\boxdot\phi\land\boxdot(\phi\to\psi)\land\neg\boxdot\psi)\land\neg\chi$.\footnote{The other case that $t\nvDash(\boxdot\phi\land\boxdot(\phi\to\psi)\land\neg\boxdot\psi)\to\chi$ can be shown similarly, by using the symmetry of $R_1$ instead.}

By $u\vDash\neg\boxdot\psi$, there are $v,w$ such that $uR_1v$ and $uR_2w$ and ($v\vDash\psi\not\iff w\vDash\psi$). Since $sR_2u$ and $R_2$ is symmetric, we have $uR_2s$. Since $s\vDash\phi$ and $u\vDash\boxdot\phi$ and $uR_1v$, it follows that $v\vDash\phi$, and thus $w\vDash\phi$. Together with ($v\vDash\psi\not\iff w\vDash\psi$), this implies that $u\nvDash\boxdot(\phi\to\psi)$: a contradiction.
\end{proof}

For the strong completeness, we adopt the following strategy: first show that ${\bf KB^\boxdot}$ is strongly complete with respect to the class of quasi-symmetric bimodal frames, then demonstrate that every quasi-symmetric bimodal model is a $\boxdot$-morphic image of some symmetric bimodal model.

We first note that $\phi\to\boxdot(\boxdot\phi\land\boxdot(\phi\to\psi)\land\neg\boxdot\psi)$, denoted by $\text{w}\boxdot\text{B}$, is derivable in ${\bf KB^\boxdot}$.

\begin{proposition}\label{prop.almost-sym}
Let $i\in\{1,2\}$ and $s,t\in S^c$ such that $\neg\boxdot\chi\in t$. If $sR_i^ct$, then $tR^c_is$.
\end{proposition}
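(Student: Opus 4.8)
The plan is to show that the very formula $\chi$ from the hypothesis $\neg\boxdot\chi\in t$ witnesses $tR^c_is$. Since $R^c_1=R^c_2$ in the canonical model I write $R^c$ for both; as $\neg\boxdot\chi\in t$ already holds, it remains only to verify the forcing clause: for every $\phi$, if $\boxdot\phi\wedge\boxdot(\chi\to\phi)\in t$ then $\phi\in s$. So I fix such a $\phi$, set $\theta:=\boxdot\phi\wedge\boxdot(\chi\to\phi)\wedge\neg\boxdot\chi$ (note $\theta\in t$, since each conjunct lies in the maximal consistent set $t$), and assume toward a contradiction that $\neg\phi\in s$. From $sR^ct$ I fix the witness $\chi_0$ provided by Def.~\ref{def.cm-boxdot}, so that $(\dagger)$: for all $\psi$, if $\boxdot\psi\wedge\boxdot(\chi_0\to\psi)\in s$ then $\psi\in t$.

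Next I would use the axioms at $s$ to manufacture two membership facts. Applying $\text{w}\boxdot\text{B}$ with $\neg\phi$ and $\neg\chi$ for its two schematic letters, and rewriting $\boxdot\neg\phi\wedge\boxdot(\neg\phi\to\neg\chi)\wedge\neg\boxdot\neg\chi$ as $\theta$ using $\boxdot\text{EQU}$, $\text{RE}\boxdot$ and PC, yields $\vdash\neg\phi\to\boxdot\theta$, hence $\boxdot\theta\in s$; applying the axiom $\boxdot\text{B}$ in the same way but with $\chi_0$ for its third schematic letter yields $\vdash\neg\phi\to\boxdot(\theta\to\chi_0)$, hence $\boxdot(\theta\to\chi_0)\in s$. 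Now combine: $\boxdot\text{CON}$ together with $\text{RE}\boxdot$ (using $\theta\wedge(\theta\to\chi_0)\lra\theta\wedge\chi_0$) turns $\boxdot\theta$ and $\boxdot(\theta\to\chi_0)$ into $\boxdot(\theta\wedge\chi_0)\in s$; then $\boxdot\text{EQU}$ and $\text{RE}\boxdot$ (using $\theta\wedge\chi_0\lra\neg(\chi_0\to\neg\theta)$) turn this into $\boxdot(\chi_0\to\neg\theta)\in s$, while $\boxdot\text{EQU}$ turns $\boxdot\theta\in s$ into $\boxdot\neg\theta\in s$. Hence $\boxdot\neg\theta\wedge\boxdot(\chi_0\to\neg\theta)\in s$, so $(\dagger)$ instantiated at $\psi:=\neg\theta$ forces $\neg\theta\in t$ — contradicting $\theta\in t$. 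Therefore $\phi\in s$, as required.

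The main obstacle is choosing the right formula to push forward through $(\dagger)$. The naive moves — pushing $\theta$ itself into $t$, or case-splitting on whether $\boxdot(\chi_0\to\theta)$ or $\boxdot(\chi_0\to\neg\theta)$ lies in $s$ — both stall, precisely because $\theta\in t$ is perfectly consistent and one branch of the case split merely re-derives it. The resolution is to aim instead for $\neg\theta\in t$: the contradiction target is $\theta$, which already sits in $t$. This succeeds because $\boxdot\text{B}$ supplies, besides $\boxdot\theta\in s$, the extra ingredient $\boxdot(\theta\to\chi_0)\in s$ for exactly the connecting formula $\chi_0$, and via $\boxdot\text{CON}$ and $\boxdot\text{EQU}$ these two are exactly enough to synthesize $\boxdot(\chi_0\to\neg\theta)\in s$. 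Everything else is routine bookkeeping with PC, $\text{RE}\boxdot$ and $\boxdot\text{EQU}$.
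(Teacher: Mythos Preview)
Your proof is correct and follows essentially the same route as the paper: both fix the witness $\chi_0$ (the paper's $\psi$) for $sR^ct$, use $\boxdot\text{B}$ and $\text{w}\boxdot\text{B}$ at $s$ with the assumption $\neg\phi\in s$, and then push $\neg\theta$ into $t$ via the canonical relation to contradict $\theta\in t$. The only cosmetic difference is that the paper instantiates the third schematic letter of $\boxdot\text{B}$ with $\neg\chi_0$ directly, obtaining $\boxdot(\theta\to\neg\chi_0)$ and hence $\boxdot(\chi_0\to\neg\theta)$ by contraposition and $\text{RE}\boxdot$, whereas you instantiate with $\chi_0$ and then take a short detour through $\boxdot\text{CON}$; both arrive at the same pair $\boxdot\neg\theta,\boxdot(\chi_0\to\neg\theta)\in s$.
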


\begin{proof}
Suppose, for a contradiction, that $\neg\boxdot\chi\in t$ and $sR_i^ct$ where $i\in\{1,2\}$ but it is {\em not} the case that $tR^c_is$. Then from $sR_i^ct$, it follows that there exists $\psi$ such that $\neg\boxdot\psi\in s$ and $(\star)$: for all $\delta$, if $\boxdot\delta\land\boxdot(\psi\to\delta)\in s$, then $\delta\in t$. From $\neg\boxdot\chi\in t$ and $\sim tR^c_is$, it follows that there exists $\phi$ such that $\boxdot\phi\land\boxdot(\chi\to\phi)\in t$ but $\phi\notin s$ (that is, $\neg\phi\in s$). By axiom $\boxdot\text{B}$, $\boxdot((\boxdot\neg\phi\land\boxdot(\neg\phi\to\neg\chi)\land\neg\boxdot\neg\chi)\to\neg\psi)\in s$; by $\text{w}\boxdot\text{B}$, $\boxdot(\boxdot\neg\phi\land\boxdot(\neg\phi\to\neg\chi)\land\neg\boxdot\neg\chi)\in s$. Using axioms $\boxdot\text{Equ}$ and $\text{PC}$ and the rule $\text{RE}\boxdot$, we can show that $\boxdot\neg(\boxdot\phi\land\boxdot(\chi\to\phi)\land\neg\boxdot\chi)\in s$ and $\boxdot(\psi\to \neg(\boxdot\phi\land\boxdot(\chi\to\phi)\land\neg\boxdot\chi))\in s$. Then by $(\star)$, $\neg(\boxdot\phi\land\boxdot(\chi\to\phi)\land\neg\boxdot\chi))\in t$: a contradiction.
\end{proof}

\begin{proposition}\label{prop.equiv}
Let $s\in S^c$. Then the following conditions are equivalent:

(1) $\neg\boxdot\chi\in s$ for some $\chi$;

(2) $sR^c_1t$ for some $t$;

(3) $sR^c_2u$ for some $u$.
\end{proposition}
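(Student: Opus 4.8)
The plan is to split the three-way equivalence into the implications $(2)\Rightarrow(1)$, $(2)\Leftrightarrow(3)$, and $(1)\Rightarrow(2)$, each of which is short; the whole thing is in effect the ``conversely'' half of the Truth Lemma (Lemma~\ref{lem.truth-lemma-k}) localised to the single world $s$, now carried out inside the canonical model of ${\bf KB^\boxdot}$ (built by the scheme of Def.~\ref{def.cm-boxdot} over maximal ${\bf KB^\boxdot}$-consistent sets). First I would dispose of $(2)\iff(3)$: the identity $R^c_1=R^c_2$ follows from the defining clause alone, so it holds for ${\bf KB^\boxdot}$ exactly as for ${\bf K^\boxdot}$, and hence ``$sR^c_1t$ for some $t$'' and ``$sR^c_2u$ for some $u$'' are literally the same statement. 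Next, $(2)\Rightarrow(1)$ is immediate from Def.~\ref{def.cm-boxdot}: if $sR^c_1t$ then, by the very definition of $R^c_1$, there is a $\chi$ with $\neg\boxdot\chi\in s$ among the defining conditions, which is $(1)$.

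The only direction with content is $(1)\Rightarrow(2)$. Assume $\neg\boxdot\chi\in s$, i.e. $\boxdot\chi\notin s$. Since ${\bf KB^\boxdot}$ extends ${\bf K^\boxdot}$, Prop.~\ref{prop.boxdot-property} (and the auxiliary Prop.~\ref{prop.admissible-provable} it uses) still applies; instantiating its $\phi$ to $\chi$, the set $\Gamma(s)=\{\psi\mid\boxdot\psi\land\boxdot(\chi\to\psi)\in s\}$ is nonempty, closed under conjunction, and consistent (by Prop.~\ref{prop.boxdot-property}.\ref{boxdot4}, since $\Gamma(s)\cup\{\chi\}$ is consistent). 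By Lindenbaum's Lemma I would extend $\Gamma(s)$ to a maximal ${\bf KB^\boxdot}$-consistent set $t$. Then $\chi$ witnesses $sR^c_1t$: indeed $\neg\boxdot\chi\in s$, and whenever $\boxdot\phi\land\boxdot(\chi\to\phi)\in s$ we have $\phi\in\Gamma(s)\subseteq t$. Hence $sR^c_1t$, which is $(2)$.

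I do not expect a genuine obstacle. The only point that needs care is invoking Prop.~\ref{prop.boxdot-property} with the right instantiation and noting that it, being a theorem about any logic extending ${\bf K^\boxdot}$, transfers verbatim to ${\bf KB^\boxdot}$; after that, the consistency of $\Gamma(s)$ and the Lindenbaum step are completely routine and parallel to the corresponding part of Lemma~\ref{lem.truth-lemma-k}.
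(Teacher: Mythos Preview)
Your proposal is correct and follows essentially the same route as the paper: the paper also derives $(2)\Rightarrow(1)$ and $(3)\Rightarrow(1)$ directly from the definition of $R^c_i$, and obtains $(1)\Rightarrow(2)\&(3)$ by citing item~\ref{boxdot4} of Prop.~\ref{prop.boxdot-property} (the consistency of $\Gamma(s)\cup\{\phi\}$, $\Gamma(s)\cup\{\neg\phi\}$) together with Lindenbaum's Lemma. Your only cosmetic difference is collapsing $(2)\iff(3)$ via the observation $R^c_1=R^c_2$ rather than treating the two implications separately, and spelling out the Lindenbaum extension step explicitly.
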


\begin{proof}
$(1)\Rightarrow(2)\&(3)$ can be obtained from item 4 of Prop.~\ref{prop.boxdot-property}, whereas $(2)\Rightarrow(1)$ and $(3)\Rightarrow(1)$ follows from the definitions of $R^c_1$ and $R^c_2$.
\end{proof}

As a corollary of Prop.~\ref{prop.almost-sym} and Prop.~\ref{prop.equiv}, we obtain the following result.
\begin{corollary}\label{coro.almost-sym}
Let $i,j\in\{1,2\}$ and $s,t\in S^c$ such that $tR^c_ju$ for some $u\in S^c$. If $sR^c_it$, then $tR^c_is$.\footnote{In fact, we can get an alternative result: let $i,j\in\{1,2\}$ and $s,t\in S^c$ such that $\neg\boxdot\chi\in t$. If $sR_i^ct$, then $tR^c_js$. This is due to the fact that $R^c_1=R^c_2$. But for our purpose of showing that every quasi-symmetric bimodal model is a $\boxdot$-morphic image of some symmetric bimodal model, we do not need the stronger correspondent (we say `stronger' because in any quasi-symmetric bimodal model $\M=\lr{S,R_1,R_2,V}$, we do not have $R_1=R_2$ in general).}
\end{corollary}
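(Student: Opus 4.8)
The plan is to obtain Corollary~\ref{coro.almost-sym} as an immediate chain through the two preceding results, letting Prop.~\ref{prop.almost-sym} do all the substantive work. The one small gap to bridge is that Prop.~\ref{prop.almost-sym} is phrased with the hypothesis ``$\neg\boxdot\chi\in t$ for some $\chi$'', whereas the corollary instead hands us the relational fact that $t$ has a $R^c_j$-successor; so the first thing I would do is translate the latter into the former.

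Concretely, assume the hypotheses $sR^c_it$ and $tR^c_ju$ for some $u\in S^c$. Since $tR^c_ju$, condition (2) of Prop.~\ref{prop.equiv} holds at $t$ when $j=1$ and condition (3) holds at $t$ when $j=2$; in either case the equivalences of Prop.~\ref{prop.equiv} yield condition (1), namely that $\neg\boxdot\chi\in t$ for some formula $\chi$. (The value of $j$ plays no further role, which is why the corollary may be stated for arbitrary $i,j$.) Now every hypothesis of Prop.~\ref{prop.almost-sym} is met --- we have this $\chi$ with $\neg\boxdot\chi\in t$ and we have $sR^c_it$ with $i\in\{1,2\}$ --- so applying that proposition directly gives $tR^c_is$, which is exactly the desired conclusion.

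There is essentially no obstacle left at this stage: the hard part has already been dispatched inside Prop.~\ref{prop.almost-sym}, whose proof is the genuinely delicate one (it invokes axiom $\boxdot\text{B}$, the derived schema $\text{w}\boxdot\text{B}$, and repeated use of $\boxdot\text{EQU}$, PC and $\text{RE}\boxdot$ to rewrite $\neg(\boxdot\phi\land\boxdot(\chi\to\phi)\land\neg\boxdot\chi)$ into a form the definition of $R^c_i$ can consume). The corollary itself requires no new calculation and is purely a repackaging: it replaces the opaque syntactic hypothesis ``$t$ contains some $\neg\boxdot\chi$'' by the geometric one ``$t$ has a $R^c_j$-successor''. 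That is the form that will actually be used downstream, where every quasi-symmetric bimodal model is realised as a $\boxdot$-morphic image of a symmetric one and the reversibility of arrows is argued in terms of successor structure rather than formula membership.
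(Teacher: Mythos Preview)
Your proposal is correct and matches the paper's approach exactly: the paper states the result simply ``as a corollary of Prop.~\ref{prop.almost-sym} and Prop.~\ref{prop.equiv}'', and your argument spells out precisely that chain---using Prop.~\ref{prop.equiv} to convert the successor hypothesis $tR^c_ju$ into the syntactic one $\neg\boxdot\chi\in t$, then invoking Prop.~\ref{prop.almost-sym}.
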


Given a bimodal model $\M=\lr{S,R_1,R_2,V}$, $\M$ is {\em quasi-symmetric}, if for $i,j\in\{1,2\}$, for all $s,t\in S$ with $tR_ju$ for some $u\in S$, $sR_it$ implies $tR_is$. Intuitively, for any point in a quasi-symmetric model, if it has a successor with respect to some index, then there is a converse arrow with respect to an index from that point to its predecessor (if any). With the notion in mind, it follows from Lemma~\ref{lem.truth-lemma-k} and Coro.~\ref{coro.almost-sym} that
\begin{theorem}\label{thm.comp-quasi-symmetric}
${\bf KB^\boxdot}$ is strongly complete with respect to the class of quasi-symmetric bimodal frames.
\end{theorem}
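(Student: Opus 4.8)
The plan is a routine canonical-model argument, since almost all the work is already packaged in Lemma~\ref{lem.truth-lemma-k} and Corollary~\ref{coro.almost-sym}. Given a ${\bf KB^\boxdot}$-consistent set $\Gamma$, I would first apply Lindenbaum's Lemma to extend $\Gamma$ to a maximal ${\bf KB^\boxdot}$-consistent set $s_0$, and then build the canonical model $\M^c=\lr{S^c,R^c_1,R^c_2,V^c}$ for ${\bf KB^\boxdot}$ exactly as in Definition~\ref{def.cm-boxdot}, with $S^c$ now ranging over maximal ${\bf KB^\boxdot}$-consistent sets. As before, $R^c_1=R^c_2$, which keeps the bookkeeping light.

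The first step is to observe that the Truth Lemma transfers verbatim. The proof of Lemma~\ref{lem.truth-lemma-k} only invokes axioms and rules of ${\bf K^\boxdot}$ (in particular $\boxdot\text{CON}$, $\boxdot\text{DIS}$, $\boxdot\text{EQU}$, $\text{RE}\boxdot$, and the derived rule $\text{wM}\boxdot$ from Prop.~\ref{prop.admissible-provable}), all of which remain available in the extension ${\bf KB^\boxdot}$; since the canonical model is defined by the same recipe, the very same argument yields $\M^c,s\vDash\phi$ iff $\phi\in s$ for every $s\in S^c$ and every $\phi\in\mathcal{L}(\boxdot)$. In particular $\M^c,s_0\vDash\Gamma$.

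The second step is to check that $\M^c$ is quasi-symmetric, i.e.\ that for all $i,j\in\{1,2\}$ and all $s,t\in S^c$ with $tR^c_ju$ for some $u$, $sR^c_it$ implies $tR^c_is$. But this is exactly the content of Corollary~\ref{coro.almost-sym}, which in turn rests on Proposition~\ref{prop.almost-sym} (where axiom $\boxdot\text{B}$ and the derived schema $\text{w}\boxdot\text{B}$ do the real work) together with Proposition~\ref{prop.equiv}. Combining the two steps, $\Gamma$ is satisfiable in the quasi-symmetric bimodal model $\M^c$ at $s_0$, which is precisely strong completeness of ${\bf KB^\boxdot}$ over quasi-symmetric bimodal frames.

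There is essentially no deep obstacle here; the only points requiring care are bookkeeping ones --- confirming that none of the auxiliary results cited in the proof of Lemma~\ref{lem.truth-lemma-k} (notably Prop.~\ref{prop.boxdot-property}) secretly used soundness or a property specific to ${\bf K^\boxdot}$, and confirming that the quasi-symmetry condition as defined (with its ``if it has a successor with respect to some index'' proviso) lines up exactly with the hypotheses of Corollary~\ref{coro.almost-sym}. The genuinely substantive reasoning --- that $\boxdot\text{B}$ forces the canonical relations to be converse-closed whenever the predecessor is not a $\boxdot$-dead point --- has already been carried out in Proposition~\ref{prop.almost-sym}, so the remaining completeness argument is short.
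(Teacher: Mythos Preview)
Your proposal is correct and follows essentially the same route as the paper: the paper's proof is a one-line remark that the result ``follows from Lemma~\ref{lem.truth-lemma-k} and Coro.~\ref{coro.almost-sym}'', and you have simply unpacked that remark into the standard canonical-model argument, correctly noting that the Truth Lemma carries over to ${\bf KB^\boxdot}$ and that Corollary~\ref{coro.almost-sym} is exactly the statement that $\M^c$ is quasi-symmetric. One tiny wording slip: in your final sentence it is the \emph{successor} $t$ (not the predecessor) that must fail to be a $\boxdot$-dead point for the converse arrow to exist, matching the hypothesis $\neg\boxdot\chi\in t$ of Proposition~\ref{prop.almost-sym}.
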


\medskip

Given a quasi-symmetric bimodal model $\M=\lr{S,R_1,R_2,V}$, to build a desired symmetric bimodal model, we need only handle those states in $\M$ that have {\em either} $R_1$-predecessors {\em or} $R_2$-predecessors but have neither $R_1$-successors nor $R_2$-successors. We collect as $T_1$ those states in $\M$ that have $R_1$-predecessors but have neither $R_1$-successors nor $R_2$-successors, and collect as $T_2$ those states in $\M$ that have $R_2$-predecessors but have neither $R_1$-successors nor $R_2$-successors. In symbol,
$$T_1=\{t\in S\mid sR_1t\text{ for some }s\in S, \text{ and }tR_1u\text{ for no }u\in S, \text{ and }tR_2v\text{ for no }v\in S\},$$
$$T_2=\{t\in S\mid sR_2t\text{ for some }s\in S, \text{ and }tR_1u\text{ for no }u\in S, \text{ and }tR_2v\text{ for no }v\in S\},$$
and we also define $\overline{T_1}=S\backslash T_1$ and $\overline{T_2}=S\backslash T_2$.


\begin{definition}\label{def.symmetric-model}
Given any quasi-symmetric bimodal model $\M=\lr{S,R_1,R_2,V}$, we define a bimodal model $\M^+=\lr{S^+,R^+_1,R^+_2,V^+}$ in which
\begin{itemize}
\item $S^+=\overline{T_1}\cup\overline{T_2}\cup\{(s,t,1)\mid t\in T_1\text{ and }sR_1t\}\cup\{(s,t,2)\mid t\in T_2\text{ and }sR_2t\}$.
\item $sR^+_1t$ iff one of the following conditions holds:
\begin{enumerate}
\item[(i)] $t\in\overline{T_1}$ and $sR_1t$
\item[(ii)] $t=(s,t',1)\in S^+$
\item[(iii)] $s=(t,s',1)\in S^+$
\end{enumerate}
\weg{\begin{enumerate}
\item[(i)] $t\in\overline{T_1}$ and $sR_1t$
\item[(ii)] $s\in\overline{T_2}$ and $tR_2s$
\item[(iii)] $t=(s,t',1)\in S^+$
\item[(iv)] $s=(t,s',2)\in S^+$
\end{enumerate}}
\item $sR^+_2t$ iff one of the following conditions holds:
\begin{enumerate}
\item[(i)] $t\in\overline{T_2}$ and $sR_2t$
\item[(ii)] $t=(s,t'',2)\in S^+$
\item[(iii)] $s=(t,s'',2)\in S^+$
\end{enumerate}
\weg{\begin{enumerate}
\item[(i)] $t\in\overline{T_2}$ and $sR_2t$
\item[(ii)] $s\in\overline{T_1}$ and $tR_1s$
\item[(iii)] $t=(s,t',2)\in S^+$
\item[(iv)] $s=(t,s',1)\in S^+$
\end{enumerate}}
\item $V^+(p)=\{s\in S^+\mid h(s)\in V(p)\}$, where $h$ is a function from $S^+$ to $S$ such that $h(s)=s$ for $s\in \overline{T_1}\cup\overline{T_2}$, and $h((s,t,i))=t$ for $(s,t,i)\in S^+$, where $i\in\{1,2\}$.
\end{itemize}
\end{definition}

Note that $\M^+$ in~\cite[Def.~5.9]{Fanetal:2014} is a special case of $\M^+$ here when $R_1^+=R^+_2=R^+$, since $\M^c$ therein is an almost symmetric model and thus a quasi-symmetric model, and the condition that $p\in h(s)$ is equivalent to the condition that $h(s)\in V^c(p)$. Note that for instance, the condition (i) in the definition of $R^+_1$ is equivalent to the more complex one `$s,t\in\overline{T_1}$ and $sR_1t$', since $sR_1t$ implies that $s\in\overline{T_1}$, and similarly for other conditions. An analogous simplification goes also to the cases (i)-(iii) in the definition $R^+$ in~\cite[Def.~5.9]{Fanetal:2014}.\footnote{In detail, the definition of $R^+$ in~\cite[Def.~5.9]{Fanetal:2014} can be simplified into the following: $sR^+t$ iff one of the following cases holds: (i) $t\in\overline{D}$ and $sR^ct$, (ii) $t=(s,s')\in S^+$, (iii) $s=(t,t')\in S^+$.}

\weg{\begin{proposition} 
If $s\in\overline{T_1}\cap\overline{T_2}$ and $sR^+_1t$, then $h(s)R_1h(t)$.
\end{proposition}

\begin{proof}
Suppose that $sR^+_1t$, to show that $h(s)R_1h(t)$. By supposition and the definition of $R^+_1$, we consider two cases.
\begin{enumerate}
\item[(1)] $t\in\overline{T_1}$ and $sR_1t$.
\end{enumerate}
\end{proof}}

Prop.~\ref{prop.forth-h}---Prop.~\ref{prop.surj-h} together say that $h$ is a surjective $\boxdot$-morphism, and therefore $\M$ is a $\boxdot$-morphic image of $\M^+$.

\begin{proposition}\label{prop.forth-h}[Forth] 
If $sR^+_1t$ and $sR^+_2u$ and $h(t)\neq h(u)$, then $h(s)R_1h(t)$ and $h(s)R_2h(u)$.
\end{proposition}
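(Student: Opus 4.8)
The plan is to argue by a case analysis on the shape of $s$ in $S^+$, just as in the proof of Prop.~\ref{prop.forth} for the serial construction, but the argument here is considerably shorter because the two hypotheses $sR^+_1t$ and $sR^+_2u$ already pin down the shape of $s$. The only step requiring thought is the first: from $sR^+_1t$ and $sR^+_2u$ I claim that $s$ cannot be one of the fresh triples, and hence $s\in\overline{T_1}\cup\overline{T_2}$, so $h(s)=s$ and $s\in S$. Indeed, inspecting Def.~\ref{def.symmetric-model}, a triple of the form $(a,b,1)\in S^+$ has \emph{no} $R^+_2$-successor: clause (i) of $R^+_2$ requires $s\in S$, which fails; clause (ii) requires $(a,b,1)R_2t''$, which is impossible since $(a,b,1)\notin S$; and clause (iii) requires $(a,b,1)=(t,s'',2)$, which fails because the last coordinates ($1$ versus $2$) do not match. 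Symmetrically, a triple $(a,b,2)\in S^+$ has no $R^+_1$-successor. Since $s$ has both an $R^+_1$-successor and an $R^+_2$-successor, $s$ must therefore be an old world, so that $h(s)=s$ and expressions like $sR_1t'$ below are meaningful.

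Next I read off which clause of the definition of $R^+_1$ witnesses $sR^+_1t$. Clause (iii) is excluded because $s$ is not a triple. If clause (i) applies, then $t\in\overline{T_1}$ and $sR_1t$; here $h(t)=t$, so $h(s)R_1h(t)$ is literally $sR_1t$. If clause (ii) applies, then $t=(s,t',1)\in S^+$, which by the definition of $S^+$ forces $t'\in T_1$ and $sR_1t'$; here $h(t)=t'$, so $h(s)R_1h(t)$ again holds. In both cases $h(s)R_1h(t)$. The argument for $h(s)R_2h(u)$ is obtained by swapping the indices $1$ and $2$ and replacing $t$ by $u$ throughout, using the analogous clauses of $R^+_2$. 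This completes the proof.

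It is worth remarking — and I would say so explicitly in passing — that, in contrast to Prop.~\ref{prop.forth}, the hypothesis $h(t)\neq h(u)$ is never used here: in the present construction a world that has any successor is never split, so no such subtlety arises. Consequently the only non-routine point is the shape analysis of Step~1; the rest is purely unwinding the definitions of $S^+$, $R^+_1$, $R^+_2$ and $h$, and I do not anticipate any genuine obstacle.
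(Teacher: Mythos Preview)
Your proof is correct and follows essentially the same case analysis as the paper: rule out that $s$ is a fresh triple so that only clauses (i) and (ii) of $R^+_1$ and $R^+_2$ remain, then read off $h(s)R_ih(\cdot)$ directly from each clause. The paper phrases the first step slightly differently---it argues that if both arrows were via clause~(iii) then $s=(t,s',1)=(u,s'',2)$ forces $t=u$, contradicting its weakened hypothesis $t\neq u$---whereas you more cleanly observe that a triple with last coordinate $1$ has \emph{no} $R^+_2$-successor at all (and dually), so the hypothesis $h(t)\neq h(u)$ is in fact never used; this is a genuine sharpening of the paper's $(\ast)$.
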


\begin{proof}
We show a stronger result:

$(\ast)$~~~If $sR^+_1t$ and $sR^+_2u$ and $t\neq u$, then $h(s)R_1h(t)$ and $h(s)R_2h(u)$.

Suppose that $sR^+_1t$ and $sR^+_2u$ and $t\neq u$. Then the arrows from $s$ to $t$ and $u$ are both impossible to be constructed by the condition (iii), since otherwise $s=(t,s',1)$ and $s=(u,s'',2)$, which would entail that $t=u$, contradiction. In the sequel, it suffices to consider the remaining two conditions.

Since $sR^+_1t$, if $t\in \overline{T_1}$ and $sR_1t$, then obviously $s\in\overline{T_1}$, thus $h(s)=s$ and $h(t)=t$, and therefore $h(s)R_1h(t)$; if $t=(s,t',1)\in S^+$, then $sR_1t'$, obviously $s\in\overline{T_1}$, thus $h(s)=s$ and $h(t)=t'$, and therefore $h(s)R_1h(t)$. Similarly, we can show $h(s)R_2h(u)$ by using $sR^+_2u$ instead.
\end{proof}

\begin{proposition}\label{prop.back-h}[Back]
If $h(s)R_1t'$ and $h(s)R_2u'$ and $t'\neq u'$, then there exist $t,u\in S^+$ such that $sR_1^+t, sR_2^+u$ and $h(t)=t'$ and $h(u)=u'$.
\end{proposition}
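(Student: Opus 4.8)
The plan is to argue by cases on the form of $s\in S^+$, in the same spirit as the proof of Prop.~\ref{prop.back}, and in fact to establish the slightly stronger claim in which the hypothesis $t'\neq u'$ is dropped: whenever $h(s)R_1t'$ and $h(s)R_2u'$, there are $t,u\in S^+$ with $sR^+_1t$, $sR^+_2u$, $h(t)=t'$ and $h(u)=u'$.

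First I would rule out the possibility that $s$ is a copy state. If $s=(a,b,k)\in S^+$ with $k\in\{1,2\}$, then $h(s)=b$, and by the definition of $S^+$ we have $b\in T_1$ when $k=1$ and $b\in T_2$ when $k=2$; in either case the definitions of $T_1$ and $T_2$ force $b$ to have no $R_1$-successor, contradicting $h(s)R_1t'$. Hence $s\in\overline{T_1}\cup\overline{T_2}$, so $h(s)=s$, and the hypotheses become $sR_1t'$ and $sR_2u'$. In particular $s$ then has both an $R_1$- and an $R_2$-successor, so $s$ belongs to neither $T_1$ nor $T_2$, and the relevant defining clauses of $R^+_1,R^+_2$ are available with $s$ on the left.

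Next I would exhibit $t$ and $u$ explicitly. For $t$: if $t'\in\overline{T_1}$, then clause (i) in the definition of $R^+_1$ applies (since $sR_1t'$), giving $sR^+_1t'$ with $h(t')=t'$, so $t:=t'$ works; if $t'\in T_1$, then $sR_1t'$ makes $(s,t',1)\in S^+$, clause (ii) of $R^+_1$ gives $sR^+_1(s,t',1)$, and $h((s,t',1))=t'$, so $t:=(s,t',1)$ works. The construction of $u$ from $u'$ is entirely symmetric, using the defining clauses of $R^+_2$, the sets $\overline{T_2}$ and $T_2$, and copy states of the form $(s,u',2)$. This finishes the argument, and the stated version with the extra assumption $t'\neq u'$ follows a fortiori.

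I do not anticipate any real obstacle; the one step that needs care is the opening reduction, which exploits the ``dead-end'' conditions built into the definitions of $T_1$ and $T_2$ to exclude the copy states from the left-hand side of the hypothesis. (As in Prop.~\ref{prop.back}, quasi-symmetry of $\M$ is not invoked in this lemma itself; it is used elsewhere, e.g. for Thm.~\ref{thm.comp-quasi-symmetric} and for verifying that $\M^+$ is symmetric.)
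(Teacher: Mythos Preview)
Your proposal is correct and follows essentially the same route as the paper's proof: both first argue that $s$ cannot be a copy state (so $h(s)=s$), and then case-split on whether the target lies in $T_i$ or $\overline{T_i}$, invoking clauses (i) and (ii) of the definition of $R^+_i$. The only minor difference is that the paper states its strengthened claim one coordinate at a time (for each $i\in\{1,2\}$, if $h(s)R_it'$ then some $t\in S^+$ has $sR^+_it$ and $h(t)=t'$), whereas you drop the inequality $t'\neq u'$ but keep both hypotheses together; the underlying argument is the same.
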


\begin{proof}
We show a stronger result:

$(\star)~~~$ For any $i\in\{1,2\}$, if $h(s)R_it'$, then there exist $t\in S^+$ such that $sR_i^+t$ and $h(t)=t'$.

Let $i\in\{1,2\}$. Suppose that $h(s)R_it'$. It is clear that $h(s)\in\overline{T_1}\cap\overline{T_2}$. Then it must be that $h(s)=s$: otherwise, $h(s)=s'$ for $s=(t,s',j)\in S^+$, where $j\in\{1,2\}$, which would imply that $s'\in\overline{T_1}\cap\overline{T_2}$ and $s'\in T_1\cup T_2$, which is a contradiction.  Hence $sR_it'$. Since $t'\in S$, $t'\in \overline{T_i}$ or $t'\in T_i$. If $t'\in \overline{T_i}$, then by the first condition of the definition of $R^+_i$, we infer $sR^+_it'$; by the definition of $h$, $h(t')=t'$. If $t'\in T_i$, then $(s,t',i)\in S^+$, and thus by the second condition of the definition of $R^+_i$, we derive $sR^+_i(s,t',i)$; by the definition of $h$, we get $h((s,t',i))=t'$, as desired.
\end{proof}


\begin{proposition}\label{prop.surj-h}
The function $h$ is surjective.
\end{proposition}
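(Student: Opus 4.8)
The plan is to show that every element of $S$ lies in the image of $h$ by a straightforward case distinction on which of the four regions of $S$ a given world belongs to, mirroring the case analysis already used in Lemma~\ref{lem.surjective} for the serial construction. Fix an arbitrary $s\in S$; I want to produce some $x\in S^+$ with $h(x)=s$. Recall the partition of $S$ induced by the sets $T_1$ and $T_2$: a world either lies in $\overline{T_1}\cap\overline{T_2}$, or in $T_1$, or in $T_2$ (the case $T_1\cap T_2$ being empty, since by definition each $T_i$-element has no successors whereas being in $T_j$ requires an appropriate predecessor, and the regions $T_1,T_2$ are in any case disjoint by the definitions combined with the valuation-independent structure; in fact $T_1$ and $T_2$ are disjoint because membership in $T_i$ carries no constraint preventing overlap — one should be slightly careful here, but the argument below does not actually need disjointness).

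First, if $s\in\overline{T_1}\cap\overline{T_2}$, then $s\in S^+$ directly by the clause $S^+=\overline{T_1}\cup\overline{T_2}\cup\cdots$, and $h(s)=s$ by the definition of $h$ on $\overline{T_1}\cup\overline{T_2}$; so $x=s$ works. Second, if $s\in T_1$, then by the definition of $T_1$ there is some $r\in S$ with $rR_1s$; hence the triple $(r,s,1)$ satisfies the membership condition for $S^+$, so $(r,s,1)\in S^+$, and by the definition of $h$ we have $h((r,s,1))=s$; thus $x=(r,s,1)$ works. Third, the symmetric case $s\in T_2$ is handled identically: pick $r$ with $rR_2s$, note $(r,s,2)\in S^+$, and $h((r,s,2))=s$. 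Since these cases are exhaustive, $h$ is surjective.

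I do not anticipate a genuine obstacle here; this is a bookkeeping argument. The only point requiring a moment's attention is the exhaustiveness of the case split — i.e. confirming that $S=(\overline{T_1}\cap\overline{T_2})\cup T_1\cup T_2$, equivalently that any world failing to be in $\overline{T_1}\cap\overline{T_2}$ (that is, any world lying in $T_1\cup T_2$) is caught by one of the latter two cases, which is immediate from $S\setminus(\overline{T_1}\cap\overline{T_2})=T_1\cup T_2$. One should also double-check that a world in $T_1$ genuinely has an $R_1$-predecessor (built into the definition of $T_1$) so that the witnessing triple really exists; this is part of the definition, so there is nothing to prove. Hence the proposition follows, and combined with Prop.~\ref{prop.forth-h}, Prop.~\ref{prop.back-h}, Prop.~\ref{prop.boxdot-modelproperty}, and Thm.~\ref{thm.comp-quasi-symmetric}, we obtain that ${\bf KB^\boxdot}$ is sound and strongly complete with respect to the class of symmetric bimodal frames.
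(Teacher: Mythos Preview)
Your proposal is correct and follows essentially the same approach as the paper. The paper's proof is slightly more economical: rather than splitting into the three cases $\overline{T_1}\cap\overline{T_2}$, $T_1$, $T_2$, it uses only the dichotomy $s\in\overline{T_1}$ versus $s\in T_1$, observing that $\overline{T_1}\subseteq S^+$ already (so $h(s)=s$ works for any $s\in\overline{T_1}$, whether or not $s\in\overline{T_2}$), which makes your third case and the exhaustiveness discussion unnecessary.
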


\begin{proof}
Suppose that $s\in S$, we need to find a $s'\in S^+$ such that $h(s')=s$.

If $s\in\overline{T_1}$, then $s\in S^+$ and $h(s)=s$; otherwise, $s\in T_1$, then there exists $x\in S$ such that $xR_1s$, thus $(x,s,1)\in S^+$, and then $h((x,s,1))=s$, as desired.
\end{proof}

Now using Prop.~\ref{prop.boxdot-modelproperty}, we immediately have

\begin{lemma}\label{lem.equivalent-b}
For all $s\in S^+$, for all $\phi\in\mathcal{L}(\boxdot)$, we have
$$\M^+,s\vDash\phi\iff \M,h(s)\vDash\phi.$$
\end{lemma}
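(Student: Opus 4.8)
The plan is to observe that Lemma~\ref{lem.equivalent-b} is a direct corollary of the invariance result Prop.~\ref{prop.boxdot-modelproperty}, once we know that $h$ is a $\boxdot$-morphism from $\M^+$ to $\M$; so the substantive content lies entirely in checking the three clauses of Def.~\ref{def.boxdot-morphism} for the map $h$ of Def.~\ref{def.symmetric-model}, and these clauses have in fact already been established in the preceding propositions.

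Concretely, I would first note that $h$ satisfies (Var) by construction: $V^+$ was defined precisely so that $s\in V^+(p)$ iff $h(s)\in V(p)$ for every $p\in\BP$ and every $s\in S^+$. The (Forth) clause of Def.~\ref{def.boxdot-morphism} for $h$ --- namely, for $y,z\in S^+$ with $sR_1^+y$, $sR_2^+z$ and $h(y)\neq h(z)$, that $h(s)R_1h(y)$ and $h(s)R_2h(z)$ --- is exactly Prop.~\ref{prop.forth-h} (indeed its stronger form $(\ast)$, whose hypothesis $y\neq z$ is weaker than $h(y)\neq h(z)$). The (Back) clause is Prop.~\ref{prop.back-h}. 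Hence $h$ is a $\boxdot$-morphism from $\M^+$ to $\M$, and by Prop.~\ref{prop.surj-h} it is surjective, so $\M$ is a $\boxdot$-morphic image of $\M^+$.

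It then only remains to invoke Prop.~\ref{prop.boxdot-modelproperty} with the model-morphism $h\colon\M^+\to\M$: for every $s\in S^+$ and every $\phi\in\mathcal{L}(\boxdot)\cup\mathcal{L}(\boxplus)$ one obtains $\M^+,s\vDash\phi$ iff $\M,h(s)\vDash\phi$; restricting attention to $\phi\in\mathcal{L}(\boxdot)$ gives the statement. Should a self-contained argument be preferred, one could instead rerun the induction on $\phi$ from the proof of Prop.~\ref{prop.boxdot-modelproperty}, using (Forth) for the contrapositive of the $\boxdot$-case from left to right and (Back) for it from right to left --- but that would merely copy that proof.

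The main obstacle has, in effect, already been dealt with: it is the case analysis in Prop.~\ref{prop.forth-h} and Prop.~\ref{prop.back-h} showing that the concrete three-clause relations $R_1^+,R_2^+$ of Def.~\ref{def.symmetric-model} behave well --- in particular that an outgoing arrow from a ``copy'' state $(t,s',i)$ cannot be witnessed by clause (iii) for both $R_1^+$ and $R_2^+$ at once, which is what makes the hypothesis $y\neq z$ (equivalently $h(y)\neq h(z)$) essential. Granting Props.~\ref{prop.forth-h}--\ref{prop.surj-h}, the present lemma is essentially a one-line consequence.
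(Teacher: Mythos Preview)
Your proposal is correct and matches the paper's approach exactly: the paper states that Props.~\ref{prop.forth-h}--\ref{prop.surj-h} make $h$ a surjective $\boxdot$-morphism, and then obtains Lemma~\ref{lem.equivalent-b} as an immediate consequence of Prop.~\ref{prop.boxdot-modelproperty}. Your explicit mention of (Var) following from the definition of $V^+$ is a welcome clarification that the paper leaves implicit.
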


\weg{\begin{proof}
By induction on $\phi\in\mathcal{L}(\boxdot)$, where the nontrivial case is $\boxdot\phi$.

Suppose that $\M^+,s\nvDash\boxdot\phi$, to show that $\M,h(s)\nvDash\boxdot\phi$. By supposition, there are $t,u\in S^+$ such that $sR^+_1t$ and $sR^+_2u$ and it is {\em not} the case that ($\M^+,t\vDash\phi\iff \M^+,u\vDash\phi$). By induction hypothesis, it is {\em not} the case that ($\M,h(t)\vDash\phi\iff \M,h(u)\vDash\phi$), and thus $h(t)\neq h(u)$. By Prop.~\ref{prop.forth-h}, we derive that $h(s)R_1h(t)$ and $h(s)R_2h(u)$. Therefore, $\M,h(s)\nvDash\boxdot\phi$.

Conversely, assume that $\M,h(s)\nvDash\boxdot\phi$, to demonstrate that $\M^+,s\nvDash\boxdot\phi$. By assumption, there exist $t',u'\in S$ such that $h(s)R_1t'$ and $h(s)R_2u'$ and it is {\em not} the case that ($\M,t'\vDash\phi\iff \M,u'\vDash\phi$). Clearly, $t'\neq u'$. Now by Prop.~\ref{prop.back-h}, there are $t,u\in S^+$ such that $sR^+_1t$ and $sR^+_2u$ and $h(t)=t'$ and $h(u)=u'$. By induction hypothesis, we can show that it is {\em not} the case that ($\M^+,t\vDash\phi\iff \M^+,u\vDash\phi$). Therefore, $\M^+,s\nvDash\boxdot\phi$.
\end{proof}}

To finish the completeness of ${\bf KB^\boxdot}$, we need also show that $\M^+$ is symmetric.
\begin{lemma}\label{lem.symmetric}
$\M^+$ is symmetric.
\end{lemma}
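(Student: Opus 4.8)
The plan is to show that each of $R^+_1$ and $R^+_2$ is symmetric; since Def.~\ref{def.symmetric-model} is completely symmetric in the two indices (exchanging the roles $1\leftrightarrow 2$ and $T_1\leftrightarrow T_2$), it suffices to carry out the argument for $R^+_1$ and then quote it for $R^+_2$. So I would fix $s,t\in S^+$ with $sR^+_1 t$ and show $tR^+_1 s$, splitting into the three clauses (i)--(iii) of the definition of $R^+_1$ that could have produced the edge.

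The two ``copy'' clauses are pure bookkeeping. If $sR^+_1 t$ came from (ii), then $t=(s,t',1)\in S^+$ for some $t'$, and this very triple witnesses clause (iii) for the pair $(t,s)$, so $tR^+_1 s$. Dually, if $sR^+_1 t$ came from (iii), then $s=(t,s',1)\in S^+$, which witnesses clause (ii) for $(t,s)$, hence $tR^+_1 s$. Along the way one records the elementary fact that membership of a triple $(x,y,1)$ in $S^+$ forces $xR_1 y$, so $x$ has an $R_1$-successor, i.e.\ $x\in\overline{T_1}\subseteq S^+$; this guarantees that every state named in these cases is genuinely in $S^+$.

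The substantive case is (i): $t\in\overline{T_1}$ and $sR_1 t$ holds in $\M$, so $s$ and $t$ are honest worlds of $\M$, not copies. Here I would invoke quasi-symmetry of $\M$. Since $sR_1 t$, the world $t$ has an $R_1$-predecessor; as $t\notin T_1$, the definition of $T_1$ then forces $t$ to have an $R_1$-successor or an $R_2$-successor. In either sub-case, quasi-symmetry applied with $i=1$ and with $j$ the index of that successor turns $sR_1 t$ into $tR_1 s$ in $\M$. Finally, $s$ now has the $R_1$-successor $t$, so $s\notin T_1$, i.e.\ $s\in\overline{T_1}$; hence clause (i) of the definition of $R^+_1$ applies to the pair $(t,s)$ and yields $tR^+_1 s$, completing the case. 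By the mirror argument $R^+_2$ is symmetric, and therefore $\M^+$ is symmetric.

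The one point requiring care is exactly the applicability of quasi-symmetry in case (i): that step needs the ``middle'' world $t$ to possess at least one successor, and this is precisely what the hypothesis $t\in\overline{T_1}$ delivers (combined with the fact that $t$ has the predecessor $s$). This is also the design rationale of the construction: the members of $T_1\cup T_2$ are worlds with no successors at all, the definition excises exactly these and re-routes their incoming edges through fresh copies, so that in $\M^+$ no $R^+_1$-edge (and no $R^+_2$-edge) inherited from $\M$ ever points into a successor-free world. Once this is in place, there are no further obstacles and the symmetry of $\M^+$ follows.
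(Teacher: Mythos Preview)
Your proposal is correct and follows essentially the same approach as the paper's proof: a case split over clauses (i)--(iii), with (ii) and (iii) being mutual inverses, and case (i) handled via quasi-symmetry of $\M$ once one observes that $t\in\overline{T_1}$ together with $sR_1t$ forces $t$ to have some successor. Your write-up is in fact a bit more explicit than the paper's in justifying why $t$ must have a successor and why $s\in\overline{T_1}$, but the argument is the same.
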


\begin{proof}
We need to show that $R_1^+$ and $R^+_2$ are both symmetric. We show only the symmetry of $R^+_1$, since the symmetry of $R^+_2$ can be proved analogously.

Suppose for any $s,t\in S^+$ we have $sR^+_1t$, to show that $tR^+_1s$. According to the definition of $R^+$, we consider three conditions.
\begin{itemize}
\item $t\in\overline{T_1}$ and $sR_1t$. Then $tR_ju$ for some $u\in S$, where $j\in \{1,2\}$. Since $\M$ is quasi-symmetric, we have $tR_1s$. Obviously, $s\in \overline{T_1}$. It then follows that $tR^+_1s$.
\item $t=(s,t',1)\in S^+$ for some $t'$. By the third condition of the definition of $R^+_1$, it follows that $tR^+_1s$.
\item $s=(t,s',1)\in S^+$ for some $s'$. By the second condition of the definition of $R^+_1$, it follows that $tR^+_1s$.
\end{itemize}
\end{proof}

\begin{theorem}
${\bf KB^\boxdot}$ is strongly complete with respect to the class of symmetric bimodal frames.
\end{theorem}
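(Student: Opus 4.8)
The plan is to assemble the strong completeness proof from the ingredients developed just above, following the same two-stage pattern already used for the serial case. Soundness is immediate from the established validity of $\boxdot\text{B}$ over symmetric frames together with the soundness of ${\bf K^\boxdot}$ over all bimodal frames, so the real work lies in strong completeness.

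First I would take an arbitrary ${\bf KB^\boxdot}$-consistent set $\Gamma$. By Thm.~\ref{thm.comp-quasi-symmetric}, $\Gamma$ is satisfiable in some quasi-symmetric bimodal model, say at $(\M,s)$ with $\M=\lr{S,R_1,R_2,V}$. Next I would apply the construction of Def.~\ref{def.symmetric-model} to $\M$, obtaining the model $\M^+=\lr{S^+,R^+_1,R^+_2,V^+}$ together with the map $h\colon S^+\to S$.

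Then I would invoke the three structural facts proved immediately beforehand. By Prop.~\ref{prop.surj-h}, $h$ is surjective, so there is an $s'\in S^+$ with $h(s')=s$, whence $\M,h(s')\vDash\Gamma$. By Prop.~\ref{prop.forth-h} and Prop.~\ref{prop.back-h}, $h$ satisfies (Forth) and (Back), and (Var) holds by the definition of $V^+$, so $h$ is a $\boxdot$-morphism from $\M^+$ to $\M$; hence by Lemma~\ref{lem.equivalent-b} (an instance of Prop.~\ref{prop.boxdot-modelproperty}) we get $\M^+,s'\vDash\phi\iff\M,s\vDash\phi$ for every $\phi\in\mathcal{L}(\boxdot)$, and therefore $\M^+,s'\vDash\Gamma$. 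Finally, by Lemma~\ref{lem.symmetric}, $\M^+$ is symmetric. Combining these, $\Gamma$ is satisfiable in a symmetric bimodal model, which yields strong completeness.

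The hard part of this argument has essentially been discharged already in the preceding lemmas, namely designing the unravelling $\M^+$ so that it is simultaneously symmetric and a $\boxdot$-morphic preimage of $\M$. The delicate point there is that fresh ``copy'' states $(s,t,i)$ must be introduced precisely for the problematic worlds in $T_1\cup T_2$ (those possessing a predecessor but no successor), so that installing the converse arrows required for symmetry does not perturb the truth value of any formula $\boxdot\phi$; once $h$ is checked to satisfy (Forth) and (Back), Prop.~\ref{prop.boxdot-modelproperty} takes care of the rest. So at the level of this theorem, the remaining step is purely a matter of chaining the cited results, with no new obstacle to overcome.
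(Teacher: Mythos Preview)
Your proposal is correct and follows essentially the same approach as the paper: start from a consistent set, use Thm.~\ref{thm.comp-quasi-symmetric} to satisfy it in a quasi-symmetric model, build $\M^+$ via Def.~\ref{def.symmetric-model}, and then invoke surjectivity of $h$ (Prop.~\ref{prop.surj-h}), the invariance Lemma~\ref{lem.equivalent-b}, and the symmetry of $\M^+$ (Lemma~\ref{lem.symmetric}) to conclude. The only difference is cosmetic: the paper cites Lemma~\ref{lem.equivalent-b} directly rather than re-deriving it from Prop.~\ref{prop.forth-h}, Prop.~\ref{prop.back-h}, and Prop.~\ref{prop.boxdot-modelproperty}.
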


\begin{proof}
Let $\Sigma$ be a consistent set. By Thm.~\ref{thm.comp-quasi-symmetric}, $\Sigma$ is satisfiable in a quasi-symmetric bimodal model, say $(\M,s)$. Construct $\M^+$ from $\M$ as in Def.~\ref{def.symmetric-model}. As $h$ is surjective (Prop.~\ref{prop.surj-h}), there exists $x\in S^+$ such that $h(x)=s$, thus $\M,h(x)\vDash\Sigma$. By Lemma~\ref{lem.equivalent-b} and Lemma~\ref{lem.symmetric}, $\Sigma$ is satisfiable in a symmetric bimodal model $\M^+$, as required.
\end{proof}


\subsubsection{Transitive-like and Euclidean-like logics}

In contingency logic, $\Delta\phi\to\Delta(\Delta\phi\vee\psi)$ and $\neg\Delta\phi\to\Delta(\neg\Delta\phi\vee\psi)$, are added in the minimal contingency logic to axiomatize the class of transitive frames and the class of Euclidean frames, respectively, see e.g.~\cite{Fanetal:2015}. It is then quite natural to expect that their $\boxdot$-counterparts $\boxdot\phi\to\boxdot(\boxdot\phi\vee\psi)$ (denoted $\boxdot4$) and $\neg\boxdot\phi\to\boxdot(\neg\boxdot\phi\vee\psi)$, can be used to axiomatize this generalized logic over the same classes. Unfortunately, it turns out to be wrong, since $\boxdot4$ and $\boxdot5$ are not sound. In what follows, instead of showing this directly, we show that one of the weaker versions of each of them, viz. $\boxdot \phi\to\boxdot\boxdot\phi$ (denoted w$\boxdot4$) and $\neg\boxdot \phi\to\boxdot\neg\boxdot\phi$ (denoted w$\boxdot5$), are invalid over the corresponding frame class.

\begin{proposition}
w$\boxdot4$ is invalid over the class of transitive bimodal frames.
\end{proposition}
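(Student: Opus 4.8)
The plan is to exhibit a concrete transitive bimodal model together with a point at which $\boxdot\phi$ holds but $\boxdot\boxdot\phi$ fails, for a suitable propositional $\phi$, which immediately refutes validity of w$\boxdot4$ over transitive bimodal frames. First I would look for the smallest counterexample: I want a point $s$ that sees (say via $R_1$ and $R_2$) points that all agree on $p$, so that $\M,s\vDash\boxdot p$; but I also want $s$ to see, via $R_1$ and $R_2$, two worlds $t,u$ that disagree on whether $\boxdot p$ holds there — one of $t,u$ has two successors disagreeing on $p$ and hence falsifies $\boxdot p$, while the other has no ``splitting'' and hence satisfies $\boxdot p$ vacuously or trivially. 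Transitivity of $R_1$ and $R_2$ must be maintained throughout, which is the only real constraint shaping the example.

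Concretely I would try the following: take $S=\{s,t,u,v,w\}$, put $sR_1t$, $sR_2t$ (so that $s$'s only successors are $t$ in both relations, forcing $\M,s\vDash\boxdot p$ as soon as we fix $p$'s value at $t$ uniformly), then arrange $t$ to have $R_1$- and $R_2$-successors that disagree on $p$, so that $\M,t\nvDash\boxdot p$. To keep $R_1,R_2$ transitive while $s$ still only sees $t$, I would instead route things so that $sR_1t$ and $tR_1t'$ forces $sR_1t'$ — so I should make $t$'s successors be $t$ itself plus one more world with the opposite $p$-value; e.g. $R_1 = R_2 \supseteq \{(s,t),(t,t),(t,w),(w,w)\}$ with $p$ true at $s,t$ and false at $w$. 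Then transitivity holds (check: $sR_1t, tR_1w \Rightarrow sR_1w$ is needed, so I must actually add $sR_1w$, which then makes $s$ see two $p$-disagreeing worlds and kills $\boxdot p$ at $s$). This shows the naive single-relation attempt fails, and that is exactly where the two distinct relations $R_1\neq R_2$ must be used.

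So the genuine construction should decouple: let $R_1$ carry the ``seeing a bad world $t$'' part and $R_2$ carry a separate harmless successor, chosen so that at $s$ the $R_1$- and $R_2$-successors happen to agree on $p$ (giving $\M,s\vDash\boxdot p$), while $t$, reached by $R_1$ only from $s$, has genuinely disagreeing successors under $R_1$ versus $R_2$ (giving $\M,t\nvDash\boxdot p$), and the unique $R_2$-successor of $s$, call it $u$, has no disagreeing successor pair (giving $\M,u\vDash\boxdot p$); then $t$ and $u$ disagree on $\boxdot p$, so $\M,s\nvDash\boxdot\boxdot p$. The transitivity obligations to discharge are: every $R_i$-path of length two from any of $s,t,u,w,\dots$ closes up; I would simply take the transitive closure of the intended base relations and then re-verify that adding those closure edges does not create a $p$-disagreeing successor pair at $s$ — this is the step I expect to require the most care, and it may force me to add one or two extra worlds (and possibly an extra propositional variable) so that the transitively forced edges all land on $p$-uniform or harmless worlds. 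Once such $\M$ and $s$ are in hand, the conclusion is immediate: $\M,s\vDash\boxdot p$ and $\M,s\nvDash\boxdot\boxdot p$, so $\M,s\nvDash \boxdot p\to\boxdot\boxdot p$, i.e. w$\boxdot4$ is not valid on $\M$, which is transitive; and since a valid formula over transitive frames would in particular hold at every point of every transitive bimodal model, w$\boxdot4$ is invalid over the class of transitive bimodal frames. Finally, since $\boxdot4$ implies w$\boxdot4$ (instantiate $\psi:=\bot$ and use RE$\boxdot$), the invalidity of w$\boxdot4$ yields the invalidity of $\boxdot4$ as well.
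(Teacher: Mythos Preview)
Your approach is correct and matches the paper's: it exhibits exactly such a four-world model with $R_1=\{(s,u),(u,u)\}$, $R_2=\{(s,t),(u,w)\}$, $V(p)=\{s,t,u\}$, so that $s\vDash\boxdot p$ (its unique $R_1$- and $R_2$-successors $u,t$ agree on $p$) while $u\nvDash\boxdot p$ (since $uR_1u$, $uR_2w$ disagree on $p$) and $t\vDash\boxdot p$ vacuously, whence $s\nvDash\boxdot\boxdot p$. Your transitivity worry dissolves by making the ``bad'' world's $R_1$-successor be \emph{itself} (a loop) and routing its disagreeing successor through $R_2$ only, so no extra edges are forced and no additional worlds or propositional variables are needed.
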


\begin{proof}
Consider the following model $\M=\lr{S,R_1,R_2,V}$:
\weg{\[
\xymatrix{&&t: p&&v:\neg p\\
s:p\ar[rr]|1\ar[urr]|2\ar@/_20pt/[rrrr] |1&&u: p\ar[urr]|2\ar[rr]|1&&w: p\\}
\]}
\[
\xymatrix{&&t: p&&\\
s:p\ar[rr]|1\ar[urr]|2&&u: p\ar@(ul,ur)|1\ar[rr]|2&&w:\neg p\\}
\]
It can be checked easily that both $R_1$ and $R_2$ are transitive, and thus $\M$ is transitive. On one hand, since all $R_1$-successors $u$ and $R_2$-successors $t$ of $s$ agree on the truth value of $p$, we have $s\vDash\boxdot p$. On the other hand, because some $R_1$-successor $u$ and some $R_2$-successor $w$ of $u$ do not agree on the truth value of $p$, we obtain $u\nvDash\boxdot p$; since $t$ has no any successors, $t\vDash\boxdot p$, and thus $s\nvDash\boxdot\boxdot p$. Therefore, $s\nvDash\boxdot p\to\boxdot\boxdot p$.
\end{proof}

\begin{proposition}
w$\boxdot5$ is invalid over the class of Euclidean bimodal frames.
\end{proposition}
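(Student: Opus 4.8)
The plan is to refute w$\boxdot5$ by exhibiting a concrete Euclidean bimodal model in which the instance $\neg\boxdot p\to\boxdot\neg\boxdot p$ (that is, w$\boxdot5$ with $\phi=p$) fails at some world; this parallels the construction used for the previous proposition on w$\boxdot4$. Concretely, I would take $\M=\lr{S,R_1,R_2,V}$ with $S=\{s,u\}$, $R_1=\{(s,s)\}$, $R_2=\{(s,u),(u,u)\}$ and $V(p)=\{s\}$, which can be pictured as
\[
\xymatrix{\M&s:p\ar@(ul,ur)|1\ar[r]|2&u:\neg p\ar@(ur,ul)|2}
\]

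First I would verify that $\M$ is a Euclidean bimodal model: for $R_1$ the only pair of $R_1$-successors of any world is $(s,s)$ issuing from $s$, and $sR_1s$ holds; for $R_2$ the only instances of the Euclidean condition demand $uR_2u$, which holds by construction. Next I would unwind the semantics step by step. Since $R_1(u)=\emptyset$, it holds vacuously that $\M,u\vDash\boxdot p$ (using the remark after the semantics that $R_i(x)=\emptyset$ makes every $\boxdot$-formula true at $x$), hence $\M,u\nvDash\neg\boxdot p$. Since $R_1(s)=\{s\}$ with $\M,s\vDash p$ and $R_2(s)=\{u\}$ with $\M,u\vDash\neg p$, the pair $s\in R_1(s)$, $u\in R_2(s)$ witnesses $\M,s\nvDash\boxdot p$, i.e. $\M,s\vDash\neg\boxdot p$. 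Finally, using $R_1(s)=\{s\}$ and $R_2(s)=\{u\}$ once more, together with $\M,s\vDash\neg\boxdot p$ and $\M,u\nvDash\neg\boxdot p$, the same pair witnesses $\M,s\nvDash\boxdot\neg\boxdot p$. Therefore $\M,s\vDash\neg\boxdot p$ but $\M,s\nvDash\boxdot\neg\boxdot p$, so $\M,s\nvDash\neg\boxdot p\to\boxdot\neg\boxdot p$, whence w$\boxdot5$ is not valid on the class of Euclidean bimodal frames.

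The only step that needs real care is the balancing act inside the construction: to make $\boxdot\neg\boxdot p$ fail at $s$ we need $s$ to see, via $R_1$, a world that genuinely satisfies $\neg\boxdot p$ — here the self-loop at $s$ does this, since $s$ itself has disagreeing successors $s$ (which satisfies $p$) and $u$ (which satisfies $\neg p$) — while simultaneously seeing, via $R_2$, a world satisfying $\boxdot p$, for which we use the $R_1$-dead end $u$. Euclideanness of $R_2$ then forces the extra arrow $uR_2u$, so I must double-check that this addition does not spoil $\M,u\vDash\boxdot p$; it does not, precisely because $R_1(u)$ stays empty. Everything else is a routine reading of the clause for $\boxdot$, and I expect no further obstacle. (If one prefers a counter-model whose refuting world has no self-loop, the same idea gives a four-world Euclidean model with arrows $s\to_1 t$, $t\to_1 t$, $t\to_2 w$, $s\to_2 u$, the loops $w\to_2 w$, $u\to_2 u$ forced by Euclidicity, and $V(p)=\{t\}$.)
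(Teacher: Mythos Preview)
Your proposal is correct and essentially identical to the paper's own proof: the paper uses the very same two-world counter-model (with your $u$ named $t$), namely $R_1=\{(s,s)\}$, $R_2=\{(s,t),(t,t)\}$, $V(p)=\{s\}$, and argues in the same way that $s\vDash\neg\boxdot p$ while $s\nvDash\boxdot\neg\boxdot p$. Your verification is in fact a bit more detailed than the paper's, which simply notes that $t$ ``has only a single successor'' to conclude $t\vDash\boxdot p$.
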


\begin{proof}
Consider the following Euclidean model $\M'=\lr{S,R_1,R_2,V}$:

\weg{\[
\xymatrix{s:p\ar[rr]|1\ar[drr]|2&&t:p\ar@(ur,ul)|1\\
&&u:\neg p\ar@(dr,dl)|2\ar[u]|1\\}
\]}

\[
\xymatrix{s: p\ar@(ul,ur)|1\ar[rr]|2&&t:\neg p\ar@(ur,ul)|2\\}
\]

On one hand, $s\vDash\neg\boxdot p$: because $sR_1s$ and $sR_2t$ and $s\vDash p$ but $t\nvDash p$. On the other hand, $s\nvDash\boxdot\neg\boxdot p$: as $t$ has only a single successor, $t\vDash\boxdot p$, i.e. $t\nvDash\neg\boxdot p$, and thus $s\nvDash\boxdot\neg\boxdot p$. Therefore, $s\nvDash\neg\boxdot p\to\boxdot\neg\boxdot p$.
\end{proof}

\weg{Consider the property of quasi-transitivity: $\forall xyz (xR_iy\land yR_jz\to xR_jz)$, and the property of quasi-Euclidicity: $\forall xyz(xR_iy\land xR_jz\to yR_jz)$, where $i,j\in\{1,2\}$. Denote ${\bf K4^\boxdot}={\bf K^\boxdot}+\boxdot4$ and ${\bf K5^\boxdot={\bf K^\boxdot}+\boxdot5}$. We then have the following results.
\begin{proposition}\

(1) ${\bf K4^\boxdot}$ is sound with respect to the class of quasi-transitive frames.

(2) ${\bf K5^\boxdot}$ is sound with respect to the class of quasi-Euclidean frames.
\end{proposition}

\begin{proof}
For (1), we need only show that $\boxdot4$ is valid on the class of quasi-transitive frames. If not, there exists a quasi-transitive model $\M=\lr{S,R_1,R_2,V}$ and a state $s\in S$ such that $\M,s\vDash\boxdot\phi$ but $s\nvDash\boxdot(\boxdot\phi\vee\psi)$. Then for some $t$ and $u$ such that $sR_1t$ and $sR_2u$ and $t\vDash \boxdot\phi\vee\psi\not\iff u\vDash\boxdot\phi\vee\psi$. W.l.o.g. we assume that $t\nvDash\boxdot\phi\vee\psi$ and $u\vDash\boxdot\phi\vee\psi$. From $t\nvDash\boxdot\phi\vee\psi$ it follows that $t\nvDash\boxdot\phi$, and thus there are $v,w$ such that $tR_1v$ and $tR_2w$ and $v\vDash\phi\not\iff w\vDash\phi$. By $sR_1t$, $tR_1v$, $tR_2w$ and the quasi-transitivity property of $\M$, we have $sR_1v$ and $sR_2w$, which together with the fact that $s\vDash\boxdot\phi$ implies that $v\vDash\phi\iff w\vDash\phi$: a contradiction.

For (2), it suffices to show that $\boxdot5$ is valid on the class of quasi-Euclidean frames. If not, there exists quasi-Euclidean model $\M=\lr{S,R_1,R_2,V}$ and state $s\in S$ such that $\M,s\vDash\neg\boxdot\phi$ but $s\nvDash\boxdot(\neg\boxdot\phi\vee\psi)$. From $s\vDash\neg\boxdot\phi$, it follows that for some $t,u$ such that $sR_1t$ and $sR_2u$ and $t\vDash\phi\not\iff u\vDash\phi$. From $s\nvDash\boxdot(\neg\boxdot\phi\vee\psi)$, it follows that for some $v,w$ such that $sR_1v$ and $sR_2w$ and $v\vDash\neg\boxdot\phi\vee\psi\not\iff w\vDash\neg\boxdot\phi\vee\psi$. W.l.o.g. we assume that $v\vDash\neg\boxdot\phi\vee\psi$ and $w\vDash\boxdot\phi\land\neg\psi$. By $sR_2w$ and $sR_1t$ and $sR_2u$ and the quasi-Euclidicity of $\M$, we infer $wR_1t$ and $wR_2u$. Due to $w\vDash\boxdot\phi$, we have $t\vDash\phi\iff u\vDash\phi$: a contradiction.
\end{proof}}

Denote ${\bf K4^\boxdot}={\bf K^\boxdot}+\boxdot4$ and ${\bf K5^\boxdot}={\bf K^\boxdot}+\boxdot5$. As we have seen, ${\bf K4^\boxdot}$ and ${\bf K5^\boxdot}$ are not the transitive $\boxdot$-logic and Euclidean $\boxdot$-logic, respectively. It is then natural to ask which logics both proof systems are; in other words, which classes of frames are characterized by ${\bf K4^\boxdot}$ and ${\bf K5^\boxdot}$, respectively.

\weg{Consider the following frame properties: where $i,j\in\{1,2\}$,
\[
\begin{array}{|lll|}
\hline
\text{Names}&\text{FO-correspondents}&\text{Abbreviations}\\
\hline
\text{Quasi-transitivity}&\forall xyz (xR_iy\land yR_jz\to xR_jz)&qt\\
\text{Pseudo-transitivity}&\forall xyz (xR_iy\land yR_jz\to xR_1z\land xR_2z)&pt\\
\text{Quasi-Euclidicity}&\forall xyz(xR_iy\land xR_jz\to yR_jz)&qe\\
\text{Pseudo-Euclidicity}&\forall xyz(xR_iy\land xR_jz\to yR_1z\land yR_2z)&pe\\
\hline
\end{array}
\]}

\weg{Quasi-transitivity: $\forall xyz (xR_iy\land yR_jz\to xR_jz)$, $qt$ for short,

Pseudo-transitivity: $\forall xyz (xR_iy\land yR_jz\to xR_1z\land xR_2z)$, $pt$ for short,

Quasi-Euclidicity: $\forall xyz(xR_iy\land xR_jz\to yR_jz)$, $qe$ for short,

Pseudo-Euclidicity: $\forall xyz(xR_iy\land xR_jz\to yR_1z\land yR_2z)$, $pe$ for short.}

We remind the reader of the properties $qt$, $pt$, $qe$, $pe$ at the beginning of Sec.~\ref{sec.extensions}. It is not hard to see that $pt$ is stronger than $qt$, and $pe$ is stronger than $qe$, thus every $pt$-frame/model is a $qt$-frame/model, and every $pe$-frame/model is a $qe$-frame/model. We use $\Gamma\vDash_{qt}\phi$ to mean that $\phi$ is a semantical consequence of $\Gamma$ over the class of $qt$-frames, that is, for every $qt$-model $\M$ and every state $s$ in $\M$, if $\M,s\vDash\psi$ for all $\psi\in\Gamma$, then $\M,s\vDash\phi$.  Similar meanings goes to $\Gamma\vDash_{pt}\phi$, $\Gamma\vDash_{qe}\phi$, and $\Gamma\vDash_{pe}\phi$. We will show that ${\bf K4^\boxdot}$ is sound and strongly complete with respect to both the class of $qt$-frames and the class of $pt$-frames, and ${\bf K5^\boxdot}$ is sound and strongly complete with respect to both the class of $qe$-frames and the class of $pe$-frames.

Before showing the soundness and strong completeness of ${\bf K4^\boxdot}$ and ${\bf K5^\boxdot}$, it is worth remarking that w$\boxdot4$ and w$\boxdot5$ are provable in ${\bf K4^\boxdot}$ and ${\bf K5^\boxdot}$, respectively, by letting $\psi$ in $\boxdot4$ and $\boxdot5$ be $\bot$.

To simplify the proofs below, we provide two useful results.

\begin{proposition}\label{prop.tran}
Define $\M^c$ w.r.t. ${\bf K4^\boxdot}$ as in Def.~\ref{def.cm-boxdot} and $sR^c_it$ for $i\in\{1,2\}$. If $\boxdot\phi\in s$, then $\boxdot\phi\in t$.
\end{proposition}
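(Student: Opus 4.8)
The plan is to unpack the definition of the canonical relation $R^c_i$ from Def.~\ref{def.cm-boxdot} and use the axiom $\boxdot4$ together with Prop.~\ref{prop.admissible-provable} and basic closure properties of the auxiliary set $\Gamma(s)$. Fix $s,t\in S^c$ with $sR^c_it$, and suppose $\boxdot\phi\in s$; we want $\boxdot\phi\in t$. Since $sR^c_it$, by definition there is a witness $\chi$ with $\neg\boxdot\chi\in s$ such that for all $\psi$, if $\boxdot\psi\land\boxdot(\chi\to\psi)\in s$ then $\psi\in t$. So it suffices to exhibit a formula $\psi$ that is provably equivalent to $\boxdot\phi$ (or at least implies $\boxdot\phi$ and lies in $t$) and for which $\boxdot\psi\land\boxdot(\chi\to\psi)\in s$.

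The natural candidate is $\psi=\boxdot\phi$ itself, noting $\Gamma(s)$ as in Prop.~\ref{prop.boxdot-property} is built from exactly the $\psi$ with $\boxdot\psi\land\boxdot(\phi\to\psi)\in s$; here we instead want $\boxdot\boxdot\phi\land\boxdot(\chi\to\boxdot\phi)\in s$. First I would derive w$\boxdot4$, namely $\boxdot\phi\to\boxdot\boxdot\phi$, by instantiating $\psi:=\bot$ in $\boxdot4$ and using $\text{RE}\boxdot$ (since $\vdash\boxdot\phi\vee\bot\lra\boxdot\phi$); the remark just before this proposition already flags that w$\boxdot4$ is provable. From $\boxdot\phi\in s$ we then get $\boxdot\boxdot\phi\in s$. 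For the second conjunct, since $\vdash\boxdot\phi\to(\chi\to\boxdot\phi)$, propositional reasoning gives $\vdash\boxdot\phi\vee\psi'\lra(\chi\to\boxdot\phi)\vee\psi'$ up to suitable bookkeeping — more directly, apply $\boxdot4$ with $\psi:=\neg\chi$: this yields $\boxdot\phi\to\boxdot(\boxdot\phi\vee\neg\chi)$, i.e. $\boxdot\phi\to\boxdot(\chi\to\boxdot\phi)$. Hence $\boxdot\phi\in s$ gives $\boxdot(\chi\to\boxdot\phi)\in s$ as well. Combining, $\boxdot\boxdot\phi\land\boxdot(\chi\to\boxdot\phi)\in s$, so by the defining clause of $sR^c_it$ (taking the formula $\boxdot\phi$ in the role of ``$\psi$''), we conclude $\boxdot\phi\in t$, as desired.

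The main obstacle I anticipate is a purely bookkeeping one: being careful that the defining condition of $R^c_i$ quantifies over \emph{all} formulas $\psi$, so that it legitimately applies to the instance $\psi:=\boxdot\phi$; and making sure the two applications of $\boxdot4$ (with $\psi:=\bot$ and $\psi:=\neg\chi$) together with $\text{RE}\boxdot$ and propositional logic really deliver both $\boxdot\boxdot\phi\in s$ and $\boxdot(\chi\to\boxdot\phi)\in s$ without needing $\boxdot\text{EQU}$ in a problematic way. Everything else is routine maximal-consistent-set manipulation. Note the analogous statement for ${\bf K5^\boxdot}$ and $\neg\boxdot\phi$ will be proved in the same style, using $\boxdot5$ in place of $\boxdot4$ and the witness $\chi$ with $\neg\boxdot\chi\in s$.
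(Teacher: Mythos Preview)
Your proposal is correct and follows essentially the same route as the paper: unpack the witness $\chi$ from $sR^c_it$, use w$\boxdot4$ (the $\psi:=\bot$ instance of $\boxdot4$) to get $\boxdot\boxdot\phi\in s$, use $\boxdot4$ with $\psi:=\neg\chi$ to get $\boxdot(\chi\to\boxdot\phi)\in s$, and then apply the defining clause of $R^c_i$ with $\boxdot\phi$ in the role of the transferred formula. The bookkeeping worries you flag are non-issues, exactly as you suspect.
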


\begin{proof}
Suppose that $sR^c_it$ for $i\in\{1,2\}$ and $\boxdot\phi\in s$. Then there exists $\chi$ such that $\neg\boxdot\chi\in s$ and $(\ast)$ for all $\phi$, if $\boxdot\phi\land\boxdot(\chi\to\phi)\in s$, then $\phi\in t$.

since $\boxdot\phi\in s$, by w$\boxdot4$, we have $\boxdot\boxdot\phi\in s$; by $\boxdot4$, we obtain that $\boxdot(\boxdot\phi\vee\neg\chi)\in s$, that is, $\boxdot(\chi\to\boxdot\phi)\in s$, then by $(\ast)$, it follows that $\boxdot\phi\in t$.
\end{proof}

\begin{proposition}\label{prop.eucl}
Define $\M^c$ w.r.t. ${\bf K5^\boxdot}$ as in Def.~\ref{def.cm-boxdot} and $sR^c_it$ for $i\in\{1,2\}$. If $\boxdot\phi\in t$, then $\boxdot\phi\in s$.
\end{proposition}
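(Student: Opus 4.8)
The final statement in the excerpt is Proposition~\ref{prop.eucl}: defining $\M^c$ with respect to ${\bf K5^\boxdot}$ as in Def.~\ref{def.cm-boxdot}, if $sR^c_it$ for $i\in\{1,2\}$ and $\boxdot\phi\in t$, then $\boxdot\phi\in s$.

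**Plan of proof.** The plan is to mirror the proof of Proposition~\ref{prop.tran} (the transitive-like case), but using axiom $\boxdot5$ in place of $\boxdot4$ and exploiting the contrapositive direction. First I would unpack the hypothesis $sR^c_it$ via Def.~\ref{def.cm-boxdot}: there is some $\chi$ with $\neg\boxdot\chi\in s$ such that $(\ast)$ for all $\psi$, if $\boxdot\psi\land\boxdot(\chi\to\psi)\in s$, then $\psi\in t$. The goal $\boxdot\phi\in s$ I would establish by contraposition: assume $\boxdot\phi\notin s$, i.e.\ $\neg\boxdot\phi\in s$, and derive $\boxdot\phi\notin t$. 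By the rule $\text{w}\boxdot5$ (which is provable in ${\bf K5^\boxdot}$ by instantiating $\psi:=\bot$ in $\boxdot5$, as the excerpt already notes) we get $\boxdot\neg\boxdot\phi\in s$; and by the full axiom $\boxdot5$, $\boxdot(\neg\boxdot\phi\vee\psi)\in s$ for every $\psi$, in particular $\boxdot(\neg\boxdot\phi\vee\neg\chi)\in s$, i.e.\ $\boxdot(\chi\to\neg\boxdot\phi)\in s$. Combining $\boxdot\neg\boxdot\phi\in s$ and $\boxdot(\chi\to\neg\boxdot\phi)\in s$, the condition $(\ast)$ applied with $\psi:=\neg\boxdot\phi$ yields $\neg\boxdot\phi\in t$, hence $\boxdot\phi\notin t$, as required. (Here I use that the definition of $R^c_i$ in Def.~\ref{def.cm-boxdot} has $\neg\boxdot\chi\in s$ as its first clause, and recall $R^c_1=R^c_2$, so the value of $i$ is immaterial.)

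**Expected main obstacle.** There is no deep obstacle; the argument is almost purely syntactic bookkeeping in the canonical model. The one point requiring a little care is the direction of the implication inside the canonical relation. Whereas for $\boxdot4$ one pushes a \emph{positive} fact $\boxdot\phi$ forward along the relation, for $\boxdot5$ one must push a \emph{negative} fact $\neg\boxdot\phi$ forward, which is exactly the shape needed so that the Euclidean closure of the canonical model validates the relevant correspondence. One should double-check the propositional rewriting $\neg\boxdot\phi\vee\neg\chi \equiv \chi\to\neg\boxdot\phi$ and that $\text{RE}\boxdot$ is available to license replacing $\psi$ by $\bot$ inside $\boxdot(\neg\boxdot\phi\vee\psi)$ when deriving $\text{w}\boxdot5$; both are routine given the axioms $\text{PC}$, $\text{RE}\boxdot$ of ${\bf K^\boxdot}$. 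With Propositions~\ref{prop.tran} and \ref{prop.eucl} in hand, the subsequent completeness arguments for ${\bf K4^\boxdot}$ over $qt$/$pt$-frames and ${\bf K5^\boxdot}$ over $qe$/$pe$-frames will follow by the now-standard recipe: build $\M^c$, take the appropriate relational closure, and invoke the Truth Lemma (Lemma~\ref{lem.truth-lemma-k}) together with these propositions to see that the closure does not disturb the truth values of $\boxdot$-formulas.
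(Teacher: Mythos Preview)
Your proof is correct and essentially identical to the paper's: both argue the contrapositive, unpack $sR^c_it$ to obtain the witness $\chi$ and condition $(\ast)$, use $\text{w}\boxdot5$ to get $\boxdot\neg\boxdot\phi\in s$, use $\boxdot5$ (with $\psi:=\neg\chi$) to get $\boxdot(\chi\to\neg\boxdot\phi)\in s$, and then apply $(\ast)$ to conclude $\neg\boxdot\phi\in t$. (One tangential remark: in your closing comments about the subsequent completeness proofs, the paper does not ``take the appropriate relational closure'' of $\M^c$; rather, it shows directly that $\M^c$ already satisfies the $pt$/$pe$ property, so no closure step is needed.)
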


\begin{proof}
Suppose that $sR^c_it$ for $i\in\{1,2\}$ and $\neg\boxdot\phi\in s$. Then there exists $\chi$ such that $\neg\boxdot\chi\in s$ and $(\star)$ for all $\psi$, if $\boxdot\psi\land\boxdot(\chi\to\psi)\in s$, then $\phi\in t$.

since $\neg\boxdot\phi\in s$, by w$\boxdot5$ it follows that $\boxdot\neg\boxdot\phi\in s$; by $\boxdot5$, it follows that $\boxdot(\neg\boxdot\phi\vee\neg\chi)\in s$, i.e. $\boxdot(\chi\to\neg\boxdot\phi)\in s$. Then using $(\star)$, we derive that $\neg\boxdot\phi\in t$.
\end{proof}

\weg{\begin{theorem}
${\bf K4^\boxdot}$ is strongly complete with respect to the class of quasi-transitive frames.
\end{theorem}

\begin{proof}
Define $\M^c$ w.r.t. ${\bf K4^\boxdot}$ as in Def.~\ref{def.cm-boxdot}. It is sufficient to show that $\M^c$ is quasi-transitive. 

Suppose for $i,j\in\{1,2\}$ that $sR^c_it$ and $tR_j^cu$. Then there exists $\chi$ such that $\neg\boxdot\chi\in s$ and (a) for all $\phi$, if $\boxdot\phi\land\boxdot(\chi\to\phi)\in s$, then $\phi\in t$, and there is a $\psi$ such that $\neg\boxdot\psi\in t$ and (b) for all $\phi$, if $\boxdot\phi\land\boxdot(\psi\to\phi)\in t$, then $\phi\in u$. To show $sR_j^cu$, it suffices to demonstrate that for all $\phi$, if $\boxdot\phi\land\boxdot(\chi\to\phi)\in s$, then $\phi\in u$. For this, let $\phi$ be arbitrary such that $\boxdot\phi\land\boxdot(\chi\to\phi)\in s$. In what follows, we will show that $\boxdot\phi\land\boxdot(\psi\to\phi)\in t$, which by (b) implies that $\phi\in u$.
\begin{itemize}
\item $\boxdot\phi\in t$: direct by $sR^c_it$ and $\boxdot\phi\in s$ and Prop.~\ref{prop.tran}.
\item $\boxdot(\psi\to\phi)\in t$: from $\boxdot(\chi\to\phi)\in s$ (i.e. $\boxdot(\neg\phi\to\neg\chi)\in s$) and $\neg\boxdot\chi\in s$ (i.e. $\boxdot\neg\chi\notin s$), it follows by axiom $\text{CON}\boxdot$ that $\boxdot(\phi\to\neg\chi)\notin s$. Thanks to $\boxdot\phi\in s$, by axiom $\text{DIS}\boxdot$ we infer that $\boxdot(\neg\phi\to\neg\psi)\in s$, that is, $\boxdot(\psi\to\phi)\in s$. Then by Prop.~\ref{prop.tran}, we conclude that $\boxdot(\psi\to\phi)\in t$. 
\end{itemize}
\end{proof}

\begin{theorem}
${\bf K5^\boxdot}$ is strongly complete with respect to the class of quasi-Euclidean frames.
\end{theorem}

\begin{proof}
Define $\M^c$ w.r.t. ${\bf K5^\boxdot}$ as in Def.~\ref{def.cm-boxdot}. It suffices to prove that $\M^c$ is quasi-Euclidean.

Suppose for $i,j\in\{1,2\}$ that $sR^c_it$ and $sR^c_ju$. Then there exists $\chi$ such that $\neg\boxdot\chi\in s$ and $(\dag)$ for all $\phi$, if $\boxdot\phi\land\boxdot(\chi\to\phi)\in s$, then $\phi\in t$, and there is a $\psi$ such that $\neg\boxdot\psi\in s$ and $(\dag\dag)$ for all $\phi$, if $\boxdot\phi\land\boxdot(\psi\to\phi)\in s$, then $\phi\in u$. To show $tR^c_ju$, we need to find a $\delta$ such that $\neg\boxdot\delta\in t$ and for all $\phi$, if $\boxdot\phi\land\boxdot(\delta\to\phi)\in t$, then $\phi\in u$. We show that $\chi$ is a desired $\delta$.
\begin{itemize}
\item $\neg\boxdot\chi\in t$: otherwise, by Prop.~\ref{prop.eucl}, we would derive $\boxdot\chi\in s$: a contradiction.
\item Assume for any $\phi$ such that $\boxdot\phi\land\boxdot(\chi\to\phi)\in t$, we only need show that $\phi\in u$. By assumption and Prop.~\ref{prop.eucl}, $\boxdot\phi\land\boxdot(\chi\to\phi)\in s$. As $\neg\boxdot\chi\in s$, $\boxdot\neg\chi\notin s$; as $\boxdot(\chi\to\phi)\in s$, $\boxdot(\neg\phi\to\neg\chi)\in s$. Thus by CON$\boxdot$, it follows that $\boxdot(\phi\to\neg\chi)\notin s$. From this and $\boxdot\phi\in s$ and $\text{DIS}\boxdot$, we have $\boxdot(\neg\phi\to\neg\psi)\in s$, that is, $\boxdot(\psi\to\phi)\in s$. Now applying $(\dag\dag)$, we get $\phi\in u$, as desired.
\end{itemize}
\end{proof}}

We are now ready to show the soundness and strong completeness of ${\bf K4^\boxdot}$ and ${\bf K5^\boxdot}$.

\begin{theorem}
Let $\phi\in\mathcal{L}(\boxdot)$. The following conditions are equivalent:
\begin{itemize}
\item[(a)] $\Gamma\vdash_{{\bf K4^\boxdot}}\phi$
\item[(b)] $\Gamma\vDash_{qt}\phi$
\item[(c)] $\Gamma\vDash_{pt}\phi$.
\end{itemize}
\end{theorem}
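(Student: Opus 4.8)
The plan is to establish the chain (c) $\Rightarrow$ (b) $\Rightarrow$ (a) $\Rightarrow$ (c). The implication (c) $\Rightarrow$ (b) is immediate: since pseudo-transitivity $pt$ is stronger than quasi-transitivity $qt$, every $qt$-model is already a $pt$-model \emph{only in the wrong direction}; rather, every $pt$-model is a $qt$-model, so the class of $qt$-models \emph{contains} the class of $pt$-models, and hence a semantic consequence over $qt$-models is a fortiori a semantic consequence over $pt$-models. Wait --- I must be careful with the direction. Since every $pt$-model is a $qt$-model, $\Gamma\vDash_{qt}\phi$ gives $\Gamma\vDash_{pt}\phi$, i.e. (b) $\Rightarrow$ (c). So the convenient chain is actually (a) $\Rightarrow$ (b) $\Rightarrow$ (c) $\Rightarrow$ (a); the first two are the easy ``soundness'' and ``class inclusion'' steps, and the last is the completeness step, which is the real work.

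For (a) $\Rightarrow$ (b), I would prove soundness of ${\bf K4^\boxdot}$ over $qt$-frames: all axioms of ${\bf K^\boxdot}$ are valid on all bimodal frames (already shown), and the rules preserve validity, so it remains to verify that $\boxdot 4$, i.e. $\boxdot\phi\to\boxdot(\boxdot\phi\vee\psi)$, is valid on every $qt$-frame. Suppose $\M,s\vDash\boxdot\phi$ but $\M,s\nvDash\boxdot(\boxdot\phi\vee\psi)$ in a $qt$-model; then there are $t,u$ with $sR_1t$, $sR_2u$ disagreeing on $\boxdot\phi\vee\psi$, so (w.l.o.g.) $t\nvDash\boxdot\phi$, giving $v,w$ with $tR_1v$, $tR_2w$ disagreeing on $\phi$. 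Quasi-transitivity then forces $sR_1v$ (or $sR_2v$) and $sR_2w$ (or $sR_1w$); chasing the four index-combinations and using $\M,s\vDash\boxdot\phi$ gives $v\vDash\phi\iff w\vDash\phi$, a contradiction. (The commented-out lemma in the excerpt for quasi-transitive frames is exactly this argument.)

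For the completeness step (c) $\Rightarrow$ (a), the natural route is to contrapose: if $\Gamma\nvdash_{{\bf K4^\boxdot}}\phi$, produce a $pt$-model satisfying $\Gamma$ but not $\phi$. I would take the canonical model $\M^c$ of ${\bf K4^\boxdot}$ built as in Def.~\ref{def.cm-boxdot} (so $R^c_1=R^c_2$), with the Truth Lemma (Lemma~\ref{lem.truth-lemma-k}) carrying over since the canonical construction does not depend on the extra axiom. The key structural fact is Prop.~\ref{prop.tran}: if $sR^c_it$ and $\boxdot\phi\in s$ then $\boxdot\phi\in t$. Using this one shows $\M^c$ is quasi-transitive: if $sR^c_it$ and $tR^c_ju$, unwinding the definition gives witnesses $\chi$ (for $sR^c_it$) and $\psi$ (for $tR^c_ju$), and to get $sR^c_ju$ one checks that whenever $\boxdot\delta\wedge\boxdot(\chi\to\delta)\in s$ one has $\boxdot\delta,\boxdot(\psi\to\delta)\in t$ --- the first by Prop.~\ref{prop.tran}, the second by a $\text{CON}\boxdot/\text{DIS}\boxdot$ manipulation analogous to the one sketched in the commented-out proof --- hence $\delta\in u$ by the defining clause of $tR^c_ju$. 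So $\M^c$ is a $qt$-model refuting $\phi$ while satisfying $\Gamma$, giving $\Gamma\nvDash_{qt}\phi$; since $qt$-models include $pt$-models this does \emph{not} immediately give $\Gamma\nvDash_{pt}\phi$. The main obstacle is precisely upgrading ``quasi-transitive'' to ``pseudo-transitive'': I expect to apply a $\boxdot$-morphism transformation in the spirit of Def.~\ref{def.cm-d} and Def.~\ref{def.symmetric-model} --- unravelling $\M^c$ into a model in which each realized two-step path $xR_iy R_j z$ is duplicated so that both $xR_1z$ and $xR_2z$ hold, while the relevant disagreements are preserved --- and then invoke Prop.~\ref{prop.boxdot-modelproperty} to transfer the failure of $\phi$. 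Getting this blow-up construction right (so that it is genuinely pseudo-transitive and the $\boxdot$-morphism conditions (Forth)/(Back) hold) is the delicate part; the algebra in the soundness and canonical-model steps is routine by comparison.
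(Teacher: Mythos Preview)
Your soundness step (a) $\Rightarrow$ (b) and the class-inclusion step (b) $\Rightarrow$ (c) match the paper exactly. The issue is in (c) $\Rightarrow$ (a): you correctly argue that the canonical model $\M^c$ is quasi-transitive, but then propose an elaborate $\boxdot$-morphism blow-up to upgrade this to pseudo-transitivity. This extra machinery is unnecessary, and you in fact already wrote down the reason why: in the canonical model of Def.~\ref{def.cm-boxdot} one has $R^c_1=R^c_2$. When the two relations coincide, quasi-transitivity and pseudo-transitivity are the \emph{same} condition --- both collapse to ordinary transitivity of the single relation. Hence the very argument you sketch (from $sR^c_it$ and $tR^c_ju$, use Prop.~\ref{prop.tran} and the $\boxdot\text{CON}/\boxdot\text{DIS}$ manipulation to get $sR^c_ju$) already yields $sR^c_1u$ and $sR^c_2u$ simultaneously, because these relations are identical. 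This is exactly what the paper does: it phrases the goal as ``$sR^c_1u$ and $sR^c_2u$'' but verifies just one defining condition, since a single witness $\chi$ works for both.

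So there is no genuine obstacle at the point where you perceive one; the proposed unravelling construction is not wrong in spirit, but it is solving a problem that does not arise here.
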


\begin{proof} We show $(a)\Rightarrow (b)\Rightarrow (c)\Rightarrow (a)$.

$(a)\Rightarrow (b)$: By soundness of ${\bf K^\boxdot}$, it suffices to show that $\boxdot4$ is valid on the class of $qt$-frames.

If not, there exists a $qt$-model $\M=\lr{S,R_1,R_2,V}$ and a state $s\in S$ such that $\M,s\vDash\boxdot\phi$ but $s\nvDash\boxdot(\boxdot\phi\vee\psi)$. Then for some $t$ and $u$, it holds that $sR_1t$ and $sR_2u$ and $t\vDash \boxdot\phi\vee\psi\not\iff u\vDash\boxdot\phi\vee\psi$. W.l.o.g. we assume that $t\nvDash\boxdot\phi\vee\psi$ and $u\vDash\boxdot\phi\vee\psi$. From $t\nvDash\boxdot\phi\vee\psi$ it follows that $t\nvDash\boxdot\phi$, and thus there are $v,w$ such that $tR_1v$ and $tR_2w$ and $(v\vDash\phi\not\iff w\vDash\phi)$. By $sR_1t$, $tR_1v$, $tR_2w$ and the property $(qt)$ of $\M$, we have $sR_1v$ and $sR_2w$, which together with the fact that $s\vDash\boxdot\phi$ implies that $(v\vDash\phi\iff w\vDash\phi)$: a contradiction.

$(b)\Rightarrow (c)$: this is because every $pt$-model is a $qt$-model.

$(c)\Rightarrow (a)$: Define $\M^c$ w.r.t. ${\bf K4^\boxdot}$ as in Def.~\ref{def.cm-boxdot}. It is sufficient to show that $\M^c$ is a $pt$-model. 

Suppose for $i,j\in\{1,2\}$ that $sR^c_it$ and $tR_j^cu$. Then there exists $\chi$ such that $\neg\boxdot\chi\in s$ and (a) for all $\phi$, if $\boxdot\phi\land\boxdot(\chi\to\phi)\in s$, then $\phi\in t$, and there is a $\psi$ such that $\neg\boxdot\psi\in t$ and (b) for all $\phi$, if $\boxdot\phi\land\boxdot(\psi\to\phi)\in t$, then $\phi\in u$. To show $sR_1^cu$ and $sR^c_2u$, it suffices to demonstrate that for all $\phi$, if $\boxdot\phi\land\boxdot(\chi\to\phi)\in s$, then $\phi\in u$. For this, let $\phi$ be arbitrary such that $\boxdot\phi\land\boxdot(\chi\to\phi)\in s$. In what follows, we will show that $\boxdot\phi\land\boxdot(\psi\to\phi)\in t$, which by (b) implies that $\phi\in u$.
\begin{itemize}
\item $\boxdot\phi\in t$: direct by $sR^c_it$ and $\boxdot\phi\in s$ and Prop.~\ref{prop.tran}.
\item $\boxdot(\psi\to\phi)\in t$: from $\boxdot(\chi\to\phi)\in s$ (i.e. $\boxdot(\neg\phi\to\neg\chi)\in s$) and $\neg\boxdot\chi\in s$ (i.e. $\boxdot\neg\chi\notin s$), it follows by axiom $\boxdot\text{CON}$ that $\boxdot(\phi\to\neg\chi)\notin s$, namely $\boxdot(\neg\phi\vee\neg\chi)\notin s$. Thanks to $\boxdot\phi\in s$, by axiom $\boxdot\text{DIS}$ we infer that $\boxdot(\phi\vee\neg\psi)\in s$, that is, $\boxdot(\psi\to\phi)\in s$. Then by Prop.~\ref{prop.tran} again, we conclude that $\boxdot(\psi\to\phi)\in t$. 
\end{itemize}
\end{proof}

\begin{theorem}
Let $\phi\in\mathcal{L}(\boxdot)$. The following conditions are equivalent:
\begin{itemize}
\item[(a)] $\Gamma\vdash_{{\bf K5^\boxdot}}\phi$
\item[(b)] $\Gamma\vDash_{qe}\phi$
\item[(c)] $\Gamma\vDash_{pe}\phi$.
\end{itemize}
\end{theorem}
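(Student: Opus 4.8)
The plan is to mirror exactly the strategy used for $\mathbf{K4^\boxdot}$ in the immediately preceding theorem, replacing quasi-transitivity by quasi-Euclidicity, pseudo-transitivity by pseudo-Euclidicity, and Proposition~\ref{prop.tran} by Proposition~\ref{prop.eucl}. So I would prove $(a)\Rightarrow(b)\Rightarrow(c)\Rightarrow(a)$.

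For $(a)\Rightarrow(b)$, by soundness of $\mathbf{K^\boxdot}$ it suffices to show $\boxdot5$ is valid on the class of $qe$-frames. Suppose $\M$ is a $qe$-model with $\M,s\vDash\neg\boxdot\phi$ but $\M,s\nvDash\boxdot(\neg\boxdot\phi\vee\psi)$. From the first, there are $t,u$ with $sR_1t$, $sR_2u$ and $(t\vDash\phi\not\iff u\vDash\phi)$, so $s$ witnesses $\neg\boxdot\phi$. From the second, there are $v,w$ with $sR_1v$, $sR_2w$ and $(v\vDash\neg\boxdot\phi\vee\psi\not\iff w\vDash\neg\boxdot\phi\vee\psi)$; w.l.o.g.\ $w\vDash\boxdot\phi\land\neg\psi$. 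Using $qe$ on $sR_2w$ together with $sR_1t$ and $sR_2u$, I get $wR_1t$ and $wR_2u$ (the pairing $(i,j)$ in $\forall xyz(xR_iy\land xR_jz\to yR_jz)$ is chosen so that $w$ inherits both arrows). Then $w\vDash\boxdot\phi$ forces $(t\vDash\phi\iff u\vDash\phi)$, contradicting the choice of $t,u$.

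For $(b)\Rightarrow(c)$, this is immediate since every $pe$-model is a $qe$-model (as remarked before, $pe$ is stronger than $qe$).

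For $(c)\Rightarrow(a)$, define the canonical model $\M^c$ for $\mathbf{K5^\boxdot}$ as in Def.~\ref{def.cm-boxdot}; by the Truth Lemma (Lemma~\ref{lem.truth-lemma-k}, which applies verbatim since $\mathbf{K5^\boxdot}$ extends $\mathbf{K^\boxdot}$) it suffices to show $\M^c$ is a $pe$-model, i.e.\ for $i,j\in\{1,2\}$, if $sR^c_it$ and $sR^c_ju$ then $tR^c_1u$ and $tR^c_2u$. Assume the hypotheses: there is $\chi$ with $\neg\boxdot\chi\in s$ and ($\dag$) for all $\phi$, if $\boxdot\phi\land\boxdot(\chi\to\phi)\in s$ then $\phi\in t$; and there is $\psi$ with $\neg\boxdot\psi\in s$ and ($\dag\dag$) for all $\phi$, if $\boxdot\phi\land\boxdot(\psi\to\phi)\in s$ then $\phi\in u$. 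To get both $tR^c_1u$ and $tR^c_2u$ I produce a single witness $\chi'$ with $\neg\boxdot\chi'\in t$ such that for all $\phi$, $\boxdot\phi\land\boxdot(\chi'\to\phi)\in t$ implies $\phi\in u$; I claim $\chi'=\chi$ works. First $\neg\boxdot\chi\in t$: otherwise $\boxdot\chi\in t$, and by Prop.~\ref{prop.eucl} applied to $sR^c_it$ we would get $\boxdot\chi\in s$, contradicting $\neg\boxdot\chi\in s$. Second, suppose $\boxdot\phi\land\boxdot(\chi\to\phi)\in t$; by Prop.~\ref{prop.eucl} (with the contrapositive of the ``$\boxdot\phi\in t\Rightarrow\boxdot\phi\in s$'' direction, i.e.\ since $\neg\boxdot\phi\in s$ implies $\neg\boxdot\phi\in t$, equivalently $\boxdot\phi\in t$ implies $\boxdot\phi\in s$), we obtain $\boxdot\phi\in s$ and $\boxdot(\chi\to\phi)\in s$. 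From $\neg\boxdot\chi\in s$ (so $\boxdot\neg\chi\notin s$) and $\boxdot(\chi\to\phi)\in s$ (i.e.\ $\boxdot(\neg\phi\to\neg\chi)\in s$), axiom $\boxdot\text{CON}$ gives $\boxdot(\phi\to\neg\chi)\notin s$, i.e.\ $\boxdot(\neg\phi\vee\neg\chi)\notin s$. Then $\boxdot\phi\in s$ together with $\boxdot\text{DIS}$ ($\boxdot\phi\to\boxdot(\phi\vee\neg\psi)\vee\boxdot(\neg\phi\vee\neg\chi)$, using $\neg\psi$ and $\neg\chi$ in the schema) yields $\boxdot(\phi\vee\neg\psi)\in s$, that is $\boxdot(\psi\to\phi)\in s$, and with $\boxdot\phi\in s$ we have $\boxdot\phi\land\boxdot(\psi\to\phi)\in s$, so ($\dag\dag$) gives $\phi\in u$. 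Hence $tR^c_1u$ and $tR^c_2u$, so $\M^c$ is a $pe$-model and $(c)\Rightarrow(a)$ follows.

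The main obstacle I anticipate is bookkeeping the index patterns: getting Euclidicity to deliver \emph{both} $R^c_1$- and $R^c_2$-arrows from $t$ to $u$ (rather than just one), and lining up the schematic instances of $\boxdot\text{CON}$ and $\boxdot\text{DIS}$ so that the single formula $\chi$ serves as the witness for $t\,R^c\,u$; but since $R^c_1=R^c_2$ in the canonical model, a witness for one relation is automatically a witness for the other, which makes the ``both arrows'' part routine once the $\boxdot\text{DIS}$ manipulation is done.
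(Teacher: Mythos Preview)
Your proposal is correct and follows essentially the same route as the paper: the same cycle $(a)\Rightarrow(b)\Rightarrow(c)\Rightarrow(a)$, the same soundness argument for $\boxdot5$ on $qe$-frames, and the same canonical-model verification of $pe$ using $\chi$ as the witness for $tR^c u$ together with Prop.~\ref{prop.eucl}, $\boxdot\text{CON}$, $\boxdot\text{DIS}$ and $(\dag\dag)$. Your closing remark that $R^c_1=R^c_2$ makes the ``both arrows'' requirement automatic is exactly the point the paper relies on implicitly.
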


\begin{proof}
We show $(a)\Rightarrow (b)\Rightarrow (c)\Rightarrow (a)$.

$(a)\Rightarrow (b)$: by soundness of ${\bf K^\boxdot}$, it is sufficient to show that $\boxdot5$ is valid on the class of $qe$-frames.

If not, there exists $qe$-model $\M=\lr{S,R_1,R_2,V}$ and state $s\in S$ such that $\M,s\vDash\neg\boxdot\phi$ but $s\nvDash\boxdot(\neg\boxdot\phi\vee\psi)$. From $s\vDash\neg\boxdot\phi$, it follows that for some $t,u$ such that $sR_1t$ and $sR_2u$ and $t\vDash\phi\not\iff u\vDash\phi$. From $s\nvDash\boxdot(\neg\boxdot\phi\vee\psi)$, it follows that for some $v,w$ such that $sR_1v$ and $sR_2w$ and $v\vDash\neg\boxdot\phi\vee\psi\not\iff w\vDash\neg\boxdot\phi\vee\psi$. W.l.o.g. we assume that $v\vDash\neg\boxdot\phi\vee\psi$ and $w\vDash\boxdot\phi\land\neg\psi$. By $sR_2w$ and $sR_1t$ and $sR_2u$ and the property $(qe)$ of $\M$, we infer $wR_1t$ and $wR_2u$. Due to $w\vDash\boxdot\phi$, we have $t\vDash\phi\iff u\vDash\phi$: a contradiction.

$(b)\Rightarrow (c)$: This is due to the fact that every $pe$-model is a $qe$-model.

$(c)\Rightarrow (a)$: Define $\M^c$ w.r.t. ${\bf K5^\boxdot}$ as in Def.~\ref{def.cm-boxdot}. The remainder is to prove that $\M^c$ is a $pe$-model.

Suppose for $i,j\in\{1,2\}$ that $sR^c_it$ and $sR^c_ju$. Then there exists $\chi$ such that $\neg\boxdot\chi\in s$ and $(\dag)$ for all $\phi$, if $\boxdot\phi\land\boxdot(\chi\to\phi)\in s$, then $\phi\in t$, and there is a $\psi$ such that $\neg\boxdot\psi\in s$ and $(\dag\dag)$ for all $\phi$, if $\boxdot\phi\land\boxdot(\psi\to\phi)\in s$, then $\phi\in u$. To show $tR^c_1u$ and $tR^c_2u$, we need to find a $\delta$ such that $\neg\boxdot\delta\in t$ and for all $\phi$, if $\boxdot\phi\land\boxdot(\delta\to\phi)\in t$, then $\phi\in u$. We show that $\chi$ is a desired $\delta$.
\begin{itemize}
\item $\neg\boxdot\chi\in t$: otherwise, by Prop.~\ref{prop.eucl}, we would derive $\boxdot\chi\in s$: a contradiction.
\item Assume for any $\phi$ such that $\boxdot\phi\land\boxdot(\chi\to\phi)\in t$, we only need show that $\phi\in u$. By assumption and Prop.~\ref{prop.eucl}, $\boxdot\phi\land\boxdot(\chi\to\phi)\in s$. As $\neg\boxdot\chi\in s$, $\boxdot\neg\chi\notin s$; as $\boxdot(\chi\to\phi)\in s$, $\boxdot(\neg\phi\to\neg\chi)\in s$. Thus by axiom $\boxdot\text{CON}$, it follows that $\boxdot(\phi\to\neg\chi)\notin s$, viz. $\boxdot(\neg\phi\vee\neg\chi)\notin s$. From this and $\boxdot\phi\in s$ and axiom $\boxdot\text{DIS}$, we have $\boxdot(\phi\vee\neg\psi)\in s$, that is, $\boxdot(\psi\to\phi)\in s$. Now applying $(\dag\dag)$, we get $\phi\in u$, as desired.
\end{itemize}
\end{proof}

\weg{
\section{Relativized}

\[
\begin{array}{lll}
\M,s\vDash\boxdot^\psi\phi&\iff &\text{for all }t,u,\text{ if }sR_1t\text{ and }sR_2u\text{ and }\M,t\vDash\psi\text{ and }\M,u\vDash\psi,\\
&&\text{then }(\M,t\vDash\phi\iff \M,u\vDash\phi).\\
\end{array}
\]

$\boxdot^\psi\phi\land \boxdot\psi\to\boxdot(\psi\to\phi)$.

$\boxdot\phi\to\boxdot^\psi\phi$.

Question: is the relativized operator definable in terms of $\boxdot$?

\section{}

\[
\begin{array}{lll}
\M,s\vDash D(\psi,\phi)&\iff &\text{for all }t,u,\text{ if }sR_1t\text{ and }sR_2u,\\
&&\text{if }(\M,t\vDash\psi\iff\M,u\vDash\psi),\\
&&\text{then }(\M,t\vDash\phi\iff \M,u\vDash\phi).\\
\end{array}
\]

\section{Adding public announcements}

$[\psi]\boxdot\phi\lra(\psi\to\boxdot[\psi]\phi)$?

Conjecture: the addition of public announcements will increase the expressivity of $\mathcal{L}(\boxdot)$.}

\section{Conclusion and Future work}\label{sec.conclusion}

In this paper, we proposed the operator $\boxdot$ for the generalized noncontingency and the operator $\boxplus$ for pseudo noncontingency, which are obtained by slightly adapting two equivalent semantics of noncontingency operator. We showed that $\mathcal{L}(\boxdot)$ is less expressive than $\mathcal{L}(\boxplus)$ over five basic model classes. Besides, the two logics cannot define the five basic frame properties, with the aid of a notion of $\boxdot$-morphisms. We then presented the minimal logic of $\mathcal{L}(\boxplus)$, which also characterizes the class of serial bimodal frames. Moreover, we axiomatized $\mathcal{L}(\boxdot)$ over various frame classes, among which the completeness of serial logic and of symmetric logic were shown via the notion of $\boxdot$-morphisms.

There are a lot of future work to be continued. For instance, the axiomatizations of $\mathcal{L}(\boxplus)$ over the class of frames with other special properties, including reflexivity, transitivity, symmetry, Euclidicity; the axiomatizations of $\mathcal{L}(\boxdot)$ over the class of transitive frames and over the class of Euclidean frames.

\section*{Acknowledgements}

This research is financially supported by the project 17CZX053 of National Social Science Fundation of China.

\bibliographystyle{alpha}
\bibliography{biblio2017}

\begin{thebibliography}{FWvD15}

\bibitem[Bro67]{Borgan67}
A.~Brogan.
\newblock Aristotle's logic of statements about contingency.
\newblock {\em Mind}, 76(301):49--61, 1967.

\bibitem[CP18]{ChenPan:2018}
J.~Chen and T.~Pan.
\newblock Logic for discribing strong belief-disagreement between agents.
\newblock {\em Studia Logica}, 106:35--47, 2018.

\bibitem[Fan17]{Fan:2017}
J.~Fan.
\newblock Strong non-contingency: on the modal logics of an operator
  expressively weaker than necessity, 2017.
\newblock Accepted by {\em Notre Dame Journal of Formal Logic}.

\bibitem[Fan18a]{Fan:2018b}
J.~Fan.
\newblock A family of neighborhood contingency logics.
\newblock 2018.
\newblock Accepted by {\em Notre Dame Journal of Formal Logic}.

\bibitem[Fan18b]{Fan2018newperspective}
J.~Fan.
\newblock Neighborhood contingency logic: A new perspective.
\newblock {\em Studies in Logic}, 11(4):37--55, 2018.

\bibitem[Fan19]{Fan:2019}
J.~Fan.
\newblock Symmetric contingency logic with unlimitedly many modalities.
\newblock {\em Journal of Philosophical Logic}, 2019.
\newblock \url{https://doi.org/10.1007/s10992-018-09498-1}.

\bibitem[FvD15]{FanvD:neighborhood}
J.~Fan and H.~van Ditmarsch.
\newblock Neighborhood contingency logic.
\newblock In M.~Banerjee and S.~Krishna, editors, {\em Logic and Its
  Application}, volume 8923 of {\em Lecture Notes in Computer Science}, pages
  88--99. Springer, 2015.

\bibitem[FWvD14]{Fanetal:2014}
J.~Fan, Y.~Wang, and H.~van Ditmarsch.
\newblock Almost necessary.
\newblock In {\em Advances in Modal Logic}, volume~10, pages 178--196, 2014.

\bibitem[FWvD15]{Fanetal:2015}
J.~Fan, Y.~Wang, and H.~van Ditmarsch.
\newblock Contingency and knowing whether.
\newblock {\em The Review of Symbolic Logic}, 8(1):75--107, 2015.

\bibitem[Hum95]{Humberstone95}
L.~Humberstone.
\newblock The logic of non-contingency.
\newblock {\em Notre Dame Journal of Formal Logic}, 36(2):214--229, 1995.

\bibitem[Hum02]{Humberstone:2002}
L.~Humberstone.
\newblock The modal logic of agreement and noncontingency.
\newblock {\em Notre Dame Journal of Formal Logic}, 43(2):95--127, 2002.

\bibitem[Hum16]{Humberstone:2016philosophicalapplications}
L.~Humberstone.
\newblock {\em Philosophical Applications of Modal Logic}.
\newblock College Publications, London, 2016.

\bibitem[Kuh95]{DBLP:journals/ndjfl/Kuhn95}
S.~Kuhn.
\newblock Minimal non-contingency logic.
\newblock {\em Notre Dame Journal of Formal Logic}, 36(2):230--234, 1995.

\bibitem[Ste08]{steinsvold:2008}
C.~Steinsvold.
\newblock A note on logics of ignorance and borders.
\newblock {\em Notre Dame J.\ Formal Logic}, 49(4):385--392, 2008.

\bibitem[vdHL04]{hoeketal:2004}
W.~van~der Hoek and A.~Lomuscio.
\newblock A logic for ignorance.
\newblock {\em Electronic Notes in Theoretical Computer Science},
  85(2):117--133, 2004.

\bibitem[Wan16]{Wang:2016}
Y.~Wang.
\newblock Beyond knowing that: a new generation of epistemic logics.
\newblock In H.~van Ditmarsch and G.~Sandu, editors, {\em Jaakko Hintikka on
  knowledge and game theoretical semantics}. Springer, 2016.

\bibitem[Zol99]{DBLP:journals/ndjfl/Zolin99}
E.~Zolin.
\newblock Completeness and definability in the logic of noncontingency.
\newblock {\em Notre Dame Journal of Formal Logic}, 40(4):533--547, 1999.

\end{thebibliography}
\end{document}